\newtheorem{thm}{Theorem}[section]
\newtheorem{prop}[thm]{Proposition}
\newtheorem{lemma}[thm]{Lemma}
\newtheorem{cor}[thm]{Corollary}
\newtheorem{cla}[thm]{Claim}
\theoremstyle{definition}
\newtheorem{definition}[thm]{Definition}
\newtheorem{assump}[thm]{Assumptions}
\theoremstyle{remark}
\newtheorem{remark}[thm]{Remark}
\numberwithin{equation}{section}
\DeclareMathOperator{\dist}{\operatorname{dist}} % The distance.
\DeclareMathOperator{\supp}{\operatorname{supp}}
\DeclareMathOperator{\lip}{\operatorname{Lip}}
\def\R{\mathbb{R}}
\def\eps{\varepsilon}
\def\wt{\widetilde}
\def\Xint#1{\mathchoice
{\XXint\displaystyle\textstyle{#1}}%
{\XXint\textstyle\scriptstyle{#1}}%
{\XXint\scriptstyle\scriptscriptstyle{#1}}%
{\XXint\scriptscriptstyle%
\scriptscriptstyle{#1}}%
\!\int}
\def\XXint#1#2#3{{\setbox0=\hbox{$#1{#2#3}{%
\int}$ }
\vcenter{\hbox{$#2#3$ }}\kern-.6\wd0}}
\def\dashint{\Xint-}
\def\I{\mathcal{I}}
\def\R1{\widetilde{R}}
\def\T1{\widetilde{T}}
\def\dist{\operatorname{dist}}
\def\supp{\operatorname{supp}}
\def\Lip{\operatorname{Lip}}
\def\eps{\varepsilon}
\def\kap{\varkappa}
\def\R{\mathbb{R}}
\def\wt{\widetilde}
\def\S{\mathcal{S}}
\def\I{\mathcal{I}}
\def\kap{\varkappa}
\def\F{\mathcal{F}}
\def\I{\mathcal{I}}
\def\SB{C_{\text{sb}}}
\def\SK{\mathbb{S}_{s}}
\def\Ms{\mathcal{M}_s}
\def\alphaflat{\alpha_{\mu,s}^{\operatorname{flat}}}
\def\alphaspike{\alpha^{\operatorname{Spike}_k}}
\def\Xint#1{\mathchoice
   {\XXint\displaystyle\textstyle{#1}}%
   {\XXint\textstyle\scriptstyle{#1}}%
   {\XXint\scriptstyle\scriptscriptstyle{#1}}%
   {\XXint\scriptscriptstyle\scriptscriptstyle{#1}}%
   \!\int}
\def\XXint#1#2#3{{\setbox0=\hbox{$#1{#2#3}{\int}$}
     \vcenter{\hbox{$#2#3$}}\kern-.5\wd0}}
\def\dashint{\Xint-}
\author[B. Jaye]{Benjamin Jaye}
\address{School of Mathematical Sciences, Clemson University, Clemson, SC 29631, USA}
\email{bjaye@clemson.edu}
\author[T. Merch\'an]{Tom\'as Merch\'an}
\address{Department of Mathematical Sciences, Kent State University, Kent, Ohio 44240, USA}
\email{tmercha2@kent.edu}
\thanks{Research supported in part by NSF DMS-1830128 and DMS-1800015.}
\title[Principal Value Integrals ]{On the problem of existence in principal value of a Calder\'{o}n-Zygmund operator on a space of non-homogeneous type}
\date{\today}
\begin{document}

\begin{abstract}In this paper we study the relationship between two fundamental regularity properties of an $s$-dimensional Calder\'{o}n-Zygmund operator (CZO) acting on a Borel measure $\mu$ in $\R^d$, with $s\in (0,d)$.

In the classical case when $s=d$ and $\mu$ is equal to the Lebesgue measure, Calder\'on and Zygmund showed that if a CZO is bounded in $L^2$ then the principal value integral exists almost everywhere.  However, there are by now several examples showing that this implication may fail for lower-dimensional kernels and measures, even when the CZO has a homogeneous kernel consisting of spherical harmonics.

We introduce sharp geometric conditions on $\mu$, in terms of certain scaled transportation distances, which ensure that an extension of the Calder\'{o}n-Zygmund theorem holds.  These conditions are necessary and sufficient in the cases of the Riesz transform and the Huovinen transform.  Our techniques build upon prior work by Mattila and Verdera, and incorporate the machinery of symmetric measures, introduced to the area by Mattila.
 \end{abstract}
\maketitle

\section{Introduction}

In this paper we introduce sharp sufficient conditions on a (locally finite, non-negative Borel) measure $\mu$ which ensure that if a Calder\'{o}n-Zygmund operator (CZO) is bounded with respect to $L^2(\mu)$, then the operator exists in the sense of principal value.  We will be working with the following class of (particularly smooth) kernels.
\begin{definition}\label{CZdef} Fix $s \in (0,d)$. A function $K: \mathbb{R}^d  \setminus \{0\} \rightarrow \mathbb{C}^d$  is an $s$-dimensional Calder\'on-Zygmund kernel if there is a constant $C_K>0$ such that following properties are satisfied for every $x, x' \in \R^d\backslash \{0\}$:
\begin{enumerate}
     \item\label{size}  $|K(x)| \leq \frac{C_K}{|x|^s}$,
     \item\label{odd} $K(-x)=-K(x)$, and
     \item\label{decay} if $|x-x'| \leq \frac{|x|}{2}$ then $|K(x)- K(x')|  \leq \frac{C_K|x-x'| }{|x|^{s+1} }$.\end{enumerate}
\end{definition}
  Let $\mu$ be a measure.  We say that the Calder\'{o}n-Zygmund operator $T$ associated to $K$ is \emph{bounded in $L^2(\mu)$} if there is a constant $C>0$ such that
\begin{equation}\label{l2bd}\sup_{\eps>0}\int_{\R^d}\Bigl|\int_{\R^d\backslash B(x,\eps)}K(x-y)f(y)d\mu(y)\Bigl|^2d\mu(x)\leq C\|f\|_{L^2(\mu)}^2
\end{equation}
for every $f\in L^2(\mu)$.  The least constant $C>0$ for which (\ref{l2bd}) holds for all $f\in L^2(\mu)$ is called the norm of $T$.\\

On the other hand, the CZO $T$ \emph{exists in the sense of principal value} if for every complex measure $\nu$,
\begin{equation}\label{PV}\lim_{r\to 0}\int_{|x-y|>r}K(x-y)d\nu(y)\text{ exists for }\mu\text{-almost every }x\in \R^d.
\end{equation}

For classical CZOs ($s=d$) acting in Euclidean space $\R^d$ with $\mu=m_d$ (the Lebesgue measure), a density argument ensures that the boundedness of a CZO in $L^2(m_d)$ implies the existence of the CZO in the sense of principal value $m_d$-almost everywhere; see for instance \cite{CZ, SW}.  \\

However, there are by now several examples which show that the Calder\'{o}n-Zygmund theorem does not necessarily extend when the Lebesgue measure is changed to a different underlying measure, see e.g. \cite{CH, Dav}. It was shown in \cite{JN3} that there is a measure $\mu$ satisfying $\mu(B(x,r))\leq r$ for every disc $B(x,r)\subset \mathbb{C}\cong\R^2$ such that the one-dimensional CZO associated to the Huovinen kernel $K(z) = \frac{z^k}{|z|^{k+1}}$, $k\geq 3$ odd, is bounded in $L^2(\mu)$ but the principal value integral fails to exist $\mu$-almost everywhere.  Huovinen \cite{H} has previously studied the geometric consequences of the existence of the principal value integral associated to this kernel, which plays a significant role in the literature due to being the prototypical example of a one-dimensional CZ kernel in the plane for which the Melnikov-Menger curvature formula (see e.g. \cite{MMV}) fails to hold, see the survey papers \cite{M2, M3}.  \\

Notwithstanding these examples, it is expected that an analogue of the classical Calder\'{o}n-Zygmund theorem should hold for the $s$-Riesz transform, the CZO with kernel $K(x)= \frac{x}{|x|^{s+1}}$ ($x\in \R^d$).  Indeed, a long standing conjecture\footnote{Often referred to as a variant of the David-Semmes question \cite{DS}.} states that if $\mu$ is a non-atomic measure, then whenever the $s$-Riesz transform operator is bounded in $L^2(\mu)$, it also exists in principal value.  This was proved for $s=1$ by Tolsa\footnote{Tolsa only considered the case $d=2$, but the method extends.} (see \cite{To6}), and for $s=(d-1)$, where it can be proved by combining the deep results of Eiderman-Nazarov-Volberg \cite{ENV}, Nazarov-Tolsa-Volberg \cite{NToV}, and Mattila-Verdera  (stated as Theorem \ref{MVthm} below) \cite{MV}.  It is an open problem for $s=2,\dots, d-2$.\\

The results described in the preceding paragraphs combine to show that the problem of when (\ref{l2bd}) implies (\ref{PV}) depends quite subtly on the algebraic structure of the underlying kernel in the operator, and the purpose of this paper is to develop some theory to better understand this.\\

  In general, the existence of the principal value integral should be viewed as stronger (but more qualitative) than the $L^2$ boundedness of the associated singular integral operator. Indeed, Nazarov-Treil-Volberg (\cite{NTV2}, see also  Tolsa \cite{To3}), proved the following theorem:
  \begin{thm}
Let $\mu$ be a measure with finite upper density, i.e. $$\overline{D}_{\mu,s}(x):=\limsup_{r\to 0}\frac{\mu(B(x,r))}{r^s}<\infty$$ for $\mu$-almost every $x\in \R^d$, and satisfying  (\ref{PV}) with $\nu=\mu$. Then for every $\eps>0$, there exists a set $E_{\eps}$ with $\mu(\R^d\backslash E_{\eps})<\eps$ such that $T$ is bounded in $L^2(\mu_{|E_\eps})$ with norm depending on $\eps$.
   \end{thm}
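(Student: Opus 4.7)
The plan is to use Egorov's theorem to pass to a subset $E_\eps$ on which both the truncated integrals $T_r\mu$ and the $s$-dimensional upper density of $\mu$ are uniformly controlled, and then invoke the non-homogeneous $T1$ theorem of Nazarov--Treil--Volberg for measures of linear $s$-growth to conclude $L^2(\mu|_{E_\eps})$-boundedness of $T$.

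First I would exploit the principal value hypothesis. Since the limit in (\ref{PV}) exists $\mu$-a.e.\ with $\nu=\mu$, the truncations $T_r\mu(x)=\int_{|x-y|>r}K(x-y)\,d\mu(y)$ converge $\mu$-a.e.\ as $r\to 0$. After a preliminary restriction to a large ball (so that the ambient measure is finite), Egorov's theorem produces a set $F_1$ with $\mu(\R^d\setminus F_1)<\eps/4$ on which the convergence is uniform; in particular, $\sup_{r>0}\|T_r\mu\|_{L^\infty(F_1)}\le M$ for some finite $M$. A second Egorov application, exploiting that $\overline{D}_{\mu,s}<\infty$ $\mu$-a.e., yields a set $F_2$ with $\mu(\R^d\setminus F_2)<\eps/4$ together with constants $C_0,r_0>0$ such that $\mu(B(x,r))\le C_0 r^s$ for every $x\in F_2$ and $r\in(0,r_0)$.

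Take $E_\eps=F_1\cap F_2$ (possibly further intersected with a large ball to guarantee $\mu(\R^d\setminus E_\eps)<\eps$) and set $\tilde\mu=\mu|_{E_\eps}$. Then $\tilde\mu$ has linear $s$-growth on its support at scales $r<r_0$, and the bound extends to all $x$ and all scales (with a constant depending on $\tilde\mu(\R^d)$ and $r_0$) by comparing $\tilde\mu(B(x,r))$ with $\tilde\mu(\R^d)$ and using the trivial inclusion argument for points off the support. Applying the principal value hypothesis to the finite complex measure $\tilde\mu$ yields $\mu$-a.e.\ existence of $\lim_{r\to 0}T_r\tilde\mu(x)$, and a third Egorov step (absorbed into the definition of $E_\eps$) provides a uniform bound $\sup_{r>0}\|T_r\tilde\mu\|_{L^\infty(E_\eps)}\le M'$.

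At this point the hypotheses of the non-homogeneous $T1$ theorem of Nazarov--Treil--Volberg (see \cite{NTV2}, or Tolsa's variant in \cite{To3}) are in place: $\tilde\mu$ has linear $s$-growth, and the maximal truncated operator applied to the constant function $1$ is uniformly bounded on $\supp\tilde\mu$. That theorem then yields $L^2(\tilde\mu)$-boundedness of $T$ with quantitative norm depending on $M'$, $C_0$, and $C_K$; the dependence on $\eps$ enters through the blow-up of $M'$ and $C_0$ as $\eps\to 0$. The main obstacle is precisely this last step, as the non-homogeneous $T1$ theorem is itself a deep result requiring random dyadic lattices, good/bad cube decompositions, and suppressed operators; in addition, verifying the weak boundedness property for $T$ with respect to $\tilde\mu$ requires genuine additional work beyond the pointwise control of the maximal truncations that the Egorov steps deliver.
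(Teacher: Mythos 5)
Note first that the paper does not prove this theorem itself; it quotes it from Nazarov--Treil--Volberg \cite{NTV2} and Tolsa \cite{To3}, so there is no internal proof to compare against and I am assessing your argument on its own terms. Your skeleton (pointwise a.e.\ finiteness of the maximal truncation $T_\ast\mu=\sup_{r>0}|T_r\mu|$ and of the upper density, an exhaustion/Egorov step to make both uniform on a set $E_\eps$ of nearly full measure, then a non-homogeneous $T1$-type theorem) is indeed the standard route, and the first two steps are fine after restricting to a ball so that $\mu$ is finite.

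The genuine gap is at the decisive third step. You cannot obtain $\sup_{r>0}\|T_r\tilde\mu\|_{L^\infty(E_\eps)}\le M'$ by ``applying the principal value hypothesis to $\tilde\mu$'' together with a further Egorov step: the hypothesis (\ref{PV}) is assumed only for $\nu=\mu$, and for $\nu=\chi_{E_\eps}\mu$ nothing is assumed. Writing $T_r\tilde\mu=T_r\mu-T_r(\chi_{E_\eps^c}\mu)$, the second term, evaluated at $x\in E_\eps$, need neither converge nor remain bounded as $r\to0$, because $x$ may be approached by mass of $\mu$ lying outside $E_\eps$; the crude estimate using only the growth $\mu(B(x,t))\le C_0t^s$ gives merely $O(\log(1/r))$. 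This is exactly the obstruction that forces the actual proofs to pass through the \emph{suppressed operators} $T_\Phi$ of Nazarov--Treil--Volberg (see also \cite{NTV3} and Chapters 5 and 8 of \cite{To5}): one suppresses the kernel at scales below a Lipschitz function $\Phi$ adapted to the exceptional set, verifies the testing condition for $T_\Phi$ with respect to the \emph{full} measure $\mu$ using only $T_\ast\mu\le M$ and the uniform growth on $E_\eps$, applies the $T1$/$Tb$ theorem to $T_\Phi$, and finally observes that $T$ and $T_\Phi$ differ by an operator bounded on $L^2(\mu|_{E_\eps})$. Without this device (or an equivalent one) your invocation of the $T1$ theorem for $\tilde\mu$ is not justified. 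A secondary misdiagnosis: for an antisymmetric kernel the weak boundedness property is essentially automatic and is not where the difficulty lies; the missing ingredient is precisely the testing condition for the restricted measure.
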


 % In the case of the Riesz kernel $K(x)=\frac{x}{|x|^{s+1}}$, of particular importance due to its connections with elliptic PDE (see, for instance \cite{AHMSV}), the geometric properties of a measure $\mu$ that are inhereted as a consequence of the existence of principal value (see \cite{M,MP,To4,To5}) are much better understood than those one can expect from the $L^2$-boundedness of the Riesz transform (see \cite{MMV, V, MPV, NToV, JNRT}).  %Especially striking is that, in contrast with the literature regarding existence of principal value, the results currently known concerning the $L^2$-boundedness of the $s$-Riesz transform have drastically different proofs depending on the dimension $s$, and there is currently very little understanding of the case when $s\in (1,d-1)$.

\subsection{A sharp sufficient condition for the existence of principal values}  An important result relating $L^2$ boundedness to the existence of principal values is the following theorem of Mattila and Verdera\footnote{In \cite{MV}, the theorem is stated in the generality of a large class of metric spaces.} \cite{MV}.  Set $\Ms$ to be the collection of measures $\mu$ satisfying
  $$\mu(B(x,r))\leq r^s \text{ for every }x\in \R^d\text{ and }r>0. $$
  \begin{thm}[The Mattila-Verdera Theorem]\label{MVthm}  Fix $\mu\in \Ms$.  Suppose that a CZO $T$ is bounded in $L^2(\mu)$, and $\mu$ has zero $s$-density in the sense that $$\overline{D}_{\mu,s}(x):=\limsup_{r\to 0}\frac{\mu(B(x,r))}{r^s}=0$$ for $\mu$-almost every $x\in \R^d$. Then $T$ exists in principal value (the limit (\ref{PV}) holds for every complex measure $\nu$).
\end{thm}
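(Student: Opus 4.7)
The strategy is the Calder\'on--Zygmund template --- establish a weak-type bound for the maximal truncated operator $T_*\nu(x):=\sup_{\eps>0}|T_\eps\nu(x)|$ and verify convergence on a dense subclass --- with the zero-density hypothesis substituting for the homogeneity of Lebesgue measure in the classical argument.  First I would Lebesgue-decompose $\nu = g\,d\mu + \nu^{\perp}$ with $\nu^{\perp}\perp\mu$.  For the singular part, Besicovitch differentiation gives $|\nu^{\perp}|(B(x,r))/\mu(B(x,r))\to 0$ at $\mu$-a.e.\ $x$, and since $\mu\in\Ms$ this upgrades to $|\nu^{\perp}|(B(x,r))=o(r^s)$ as $r\to 0$.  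A dyadic decomposition of the annular integrals using only the size bound (\ref{size}) then shows that $T_\eps\nu^{\perp}(x)$ is Cauchy in $\eps$ at such $x$, so the singular piece converges in principal value without further input.

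The bulk of the work is the absolutely continuous part $g\,d\mu$, which after a cutoff may be taken with $g\in L^2(\mu)$ of compact support.  The goal is a Cotlar-type inequality
\[
T_*g(x)\;\lesssim\;\Big(\sup_{r>0}\tfrac{1}{\mu(B(x,r))}\int_{B(x,r)}|Tg|^2\,d\mu\Big)^{1/2}+\mathrm{err}(x)
\]
valid at $\mu$-a.e.\ $x$, with $\mathrm{err}(x)\to 0$ under the zero-density hypothesis.  The standard derivation compares $T_\eps g(x)$ with $T_\eps g(x')$ as $x'$ ranges over a small ball: the difference is controlled via the smoothness (\ref{decay}), while the self-interaction $\|g\|_\infty\,\mu(B(x,c\eps))/\eps^s$ that normally obstructs the non-doubling argument is here $o(1)$ at $\mu$-a.e.\ $x$ by hypothesis.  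Combining with (\ref{l2bd}) and a Kolmogorov-type inequality yields a weak-$L^2(\mu)$ bound for $T_*$.

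With the maximal bound in hand, a density argument reduces $\mu$-a.e.\ convergence for all $g\in L^2(\mu)$ of compact support to verification on a dense subclass, and compactly supported Lipschitz functions are the natural candidate.  Writing
$T_\eps g(x)-T_{\eps'}g(x)$ as
$\int_{\eps'<|x-y|\le\eps}K(x-y)(g(y)-g(x))\,d\mu(y)+g(x)\int_{\eps'<|x-y|\le\eps}K(x-y)\,d\mu(y)$,
the first piece is controlled by $\|g\|_{\Lip}\int_{|x-y|\le\eps}|x-y|^{1-s}\,d\mu(y)\to 0$ using $\mu\in\Ms$, while the second is handled by combining the odd symmetry (\ref{odd}) of $K$ with the zero-density condition, which makes the truncated $\mu$-integral of $K$ vanish in the limit at $\mu$-a.e.\ $x$.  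The principal obstacle --- and the main technical contribution of Mattila and Verdera --- is the Cotlar step: without doubling one cannot invoke the Calder\'on--Zygmund decomposition, and threading the zero-density hypothesis through the estimates so that the error terms are truly $o(1)$ $\mu$-a.e.\ (rather than merely bounded) is where essentially all the work lies.
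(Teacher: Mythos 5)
Your outline correctly reproduces the standard reductions (a weak-type bound for $T_*$, which indeed follows from the $L^2(\mu)$ boundedness via the non-homogeneous Cotlar inequality of Tolsa and Nazarov--Treil--Volberg; disposal of the part of $\nu$ singular with respect to $\mu$; reduction to a dense subclass), but the step that carries the entire content of the theorem is asserted rather than proved. In your dense-class computation the term
$$g(x)\int_{\eps'<|x-y|\le \eps}K(x-y)\,d\mu(y)$$
is exactly $g(x)\bigl(T_{\eps'}(\mu)(x)-T_{\eps}(\mu)(x)\bigr)$, so showing that it tends to zero is equivalent to showing that $\lim_{\eps\to0}T_\eps(\mu)(x)$ exists $\mu$-a.e.\ --- which, after the reduction to $\nu=\mu$ (Section \ref{reduction}), \emph{is} the theorem. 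The mechanism you invoke does not deliver this: zero density gives $\mu(B(x,t))=o(t^s)$, so each dyadic annulus contributes $o(1)$ to $\int_{\eps'<|x-y|\le\eps}|K(x-y)|\,d\mu(y)$, but there are unboundedly many annuli between $\eps'$ and $\eps$ and the sum $\sum_k D_\mu(B(x,2^{-k}\eps))$ need not be small (it can diverge); and the oddness of $K$ yields no pointwise cancellation against a general measure, since $\mu$ need not be even approximately reflection symmetric about $x$ at small scales. A milder version of the same objection applies to your treatment of $\nu^{\perp}$: one needs the integrated (Fubini-type) argument showing $\int_{B(x,1)}|x-y|^{-s}\,d|\nu^{\perp}|(y)<\infty$ for $\mu$-a.e.\ $x$, not a termwise $o(1)$ bound on annuli.

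What is actually needed --- and what the paper does, following Mattila--Verdera --- is to harvest the antisymmetry of $K$ in \emph{averaged} rather than pointwise form: $\int_B T_\eps(\chi_B\mu)\,d\mu=0$ for every ball $B$, hence $\int_B T_\eps(\mu)\,d\mu=\int_B T_\eps(\chi_{\R^d\backslash B}\mu)\,d\mu$. Combined with the $L^2(\mu)$ boundedness this produces the weak limit $\widetilde T(\mu)$ together with the representation $\widetilde T(\mu)(x)=\lim \frac{1}{\mu(B_r)}\int_{B_r} T(\chi_{\R^d\backslash B_r}\mu)\,d\mu$ at $\mu$-a.e.\ $x$ (Theorem \ref{Weak}), and the zero-density hypothesis is then spent on showing that $T_r(\mu)(x)$ differs from such an average, over a ball of radius comparable to $r$ containing $x$, by $o(1)$; this comparison (carried out in the paper through Sections \ref{estA}--\ref{threeestimates} with $\SK=\{0\}$) is where essentially all the work lies, and it is absent from your plan. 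So the proposal has a genuine gap at the decisive step.
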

 The zero density condition in the Mattila-Verdera theorem is necessary for the principal value integral to exist for the $s$-Riesz transform if $s\notin \mathbb{Z}$ (see Theorem \ref{firstpaperthm} below), but is not sharp if $s\in \mathbb{Z}$ (for instance consider the $s$-dimensional Hausdorff measure restricted to an $s$-plane).\\

Our first main result is a natural strengthening of the Mattila-Verdera theorem for CZOs of integer dimension that gives a necessary and sufficient condition in the case of the $s$-Riesz transform.  To state the theorem, we will require the introduction of (a variant of) the Lipschitz transportation number. Variants of transportation numbers have proved very useful in the geometric study of singular integral operators, following their introduction to the area by Tolsa \cite{To1,To2}.\\

Fix $s\in \mathbb{Z}$.  Set $\mathcal{G}(s,d)$ to be the collection of $s$-dimensional linear subspaces of $\R^d$.  For $x\in \R^d$ and $r>0$ we define the transportation distance from $\mu$ to the affine plane $x+L$, $L\in \mathcal{G}(s,d)$, by
 $$\alpha_{\mu, L}(B(x,r))=\sup_{\substack{f\in \Lip_0(B(x,4r))\\\|f\|_{\Lip}\leq \tfrac{1}{r}}}\Bigl|\frac{1}{r^s}\int_{\R^d} \varphi\Bigl(\frac{|\cdot-x|}{r}\Bigl) f d(\mu-c_{\mu,L}\mathcal{H}^s_{x+L})\Bigl|,
 $$
 where
 \begin{itemize}
 \item we denote by $\mathcal{H}^s_{x+L}$ the $s$-dimensional Hausdorff measure restricted to the affine plane $x+L$,
 \item  $\varphi$ is a smooth function that satisfies $\varphi\equiv 1$ on $(0,3)$ and $\supp(\varphi) \subset (0,4)$,
 \item $\Lip_0(B(x,4r))$ denotes the collection of Lipschitz continuous functions compactly supported in $B(x,4r)$ (see also Section \ref{prem}),
 \item and $$c_{\mu,L}= \int_{\R^d} \varphi\bigl(\tfrac{|\cdot-x|}{r}\bigl) \,d\mu\Bigl[\int_{\R^d} \varphi \bigl(\tfrac{|\cdot-x|}{r}\bigl) \,d\mathcal{H}^s_{x+L}\Bigl]^{-1}.$$
 \end{itemize}
The number $c_{\mu, L}$ of course depends on $x$ and $r$, but we will only consider it at a fixed scale at any given time so we suppress this dependence.

We then define the transportation distance to affine $s$-planes by $$\alphaflat(B(x,r)) = \inf_{L\in \mathcal{G}(s,d)}\alpha_{\mu, L}(B(x,r)).
$$
Observe that $\alphaflat(B(x,r))$ can be small if either the density $\frac{\mu(B(x, 4r))}{r^s}$ is small, or if $\mu$ is well approximated by an $s$-plane within the ball $B(x,4r)$.  With this notation we have the following theorem:

\begin{thm}\label{planethm}  Fix $s\in \mathbb{Z}$ and $\mu\in \Ms$.  Suppose that $T$ is bounded in $L^2(\mu)$, and \begin{equation}\label{flatcond}\lim_{r\to 0}\alphaflat(B(x,r))=0\text{ for }\mu\text{-almost every }x\in \R^d,\end{equation} then $T$ exists in principal value.
\end{thm}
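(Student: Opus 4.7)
My strategy is to extend the proof of the Mattila--Verdera theorem by replacing the zero-density hypothesis with the flatness hypothesis $\alphaflat(B(x,r))\to 0$. The key new ingredient is that, when $s\in\mathbb{Z}$, the measure $\mathcal{H}^s_{x+L}$ on an $s$-plane through $x$ is antipodally symmetric around $x$, so by the oddness of the kernel, $\int K(x-y)\,h(|x-y|)\,d\mathcal{H}^s_{x+L}(y)=0$ for every radial cutoff $h$. The flatness hypothesis quantifies how well $\mu$ is approximated by $c_{\mu,L}\mathcal{H}^s_{x+L}$ on balls $B(x,r)$, which allows this cancellation to be transferred from $\mathcal{H}^s_{x+L}$ to $\mu$ with a quantified error of size $\alphaflat(B(x,r))$.

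The first step is to reduce to $\nu=f\mu$ with $f\in L^\infty(\mu)$ compactly supported, via the Lebesgue decomposition $\nu=f\mu+\sigma$ (with $\sigma\perp\mu$), using a standard differentiation argument to show $|\sigma|(B(x,r))=o(r^s)$ at $\mu$-a.e.\ $x$. Fix a point $x_0$ at which the flatness condition holds, at which the Lebesgue-point condition for $f$ holds, and at which the maximal truncation $T^*(f\mu)(x_0)$ is finite (a.e.\ by the $L^2$-boundedness of $T$ combined with Cotlar's inequality). Introduce the smoothly truncated operator $\wt T^\eps$ with kernel $K(x)\psi(|x|/\eps)$ for a smooth $\psi$ vanishing on $[0,1]$ and equal to $1$ on $[2,\infty)$. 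The central technical lemma is: for any smooth radial $h$ supported in $\{r\leq|y-x_0|\leq 2r\}$ with $\|h\|_\infty+r\|\nabla h\|_\infty\leq C$,
\[
\Bigl|\int K(x_0-y)\,h(|x_0-y|)\,d\mu(y)\Bigr|\lesssim \alphaflat(B(x_0,r)).
\]
This follows from (a) $\int K h\,d\mathcal{H}^s_{x_0+L}=0$, by oddness plus symmetry, and (b) $\bigl|\int K h\,d(\mu-c_{\mu,L}\mathcal{H}^s_{x_0+L})\bigr|\lesssim \alpha_{\mu,L}(B(x_0,r))$, obtained by testing $\alpha_{\mu,L}$ against the rescaled Lipschitz function $r^s K(x_0-\cdot)h$, which has sup norm $\lesssim 1$ and Lipschitz norm $\lesssim 1/r$ on its support in $B(x_0,4r)$.

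The lemma directly controls the sharp-vs-smooth difference $T^\eps(f\mu)(x_0)-\wt T^\eps(f\mu)(x_0)$ (after approximating the sharp indicator of the annulus $[\eps,2\eps]$ by a Lipschitz cutoff, with the boundary-shell error handled by a further application of flatness that balances Lipschitz norm against annular mass) and each individual dyadic scale in the telescope $\wt T^\eps-\wt T^\delta=\sum_k$(scale-$2^{-k}$ contributions), each contributing $\lesssim \alphaflat(B(x_0,2^{-k}))$ plus a Lebesgue-point oscillation of $f$. The \emph{main obstacle} is that the dyadic sum $\sum_k\alphaflat(B(x_0,2^{-k}))$ need not converge --- we only know each term tends to zero. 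I expect this to be circumvented via $L^2$-methods: Egorov's theorem makes flatness decay uniform on subsets of large $\mu$-measure, which transfers the pointwise per-scale estimates into an $L^2(\mu)$-Cauchy estimate $\|\wt T^{\eps_1}(f\mu)-\wt T^{\eps_2}(f\mu)\|_{L^2(\mu)}\to 0$ as $\eps_1,\eps_2\to 0$. Combined with the $L^2$-boundedness of the maximal truncation, a standard extraction of a pointwise-convergent subsequence plus domination by the maximal function then yields $\mu$-a.e.\ convergence of $\wt T^\eps(f\mu)$, and consequently of $T^\eps(f\mu)$, completing the proof.
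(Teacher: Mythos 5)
Your per-scale cancellation lemma is correct and is essentially the mechanism the paper uses (oddness of $K$ plus the antipodal symmetry of $\mathcal{H}^s_{x_0+L}$ about $x_0$, transferred to $\mu$ by testing $\alpha_{\mu,L}(B(x_0,r))$ against the Lipschitz function $r^sK(x_0-\cdot)h$; compare Remark \ref{intsym} and Lemma \ref{smoothK}), and the rough-to-smooth truncation step via thin annuli is also as in Corollary \ref{rough2smooth}. The genuine gap is exactly at the point you flag as the ``main obstacle,'' and the circumvention you propose does not work. The hypothesis gives only $\alphaflat(B(x_0,2^{-k}))\to 0$, with no summability; a dyadic telescope from scale $\eps_1$ down to scale $\eps_2$ has about $\log(\eps_1/\eps_2)$ terms, so even after Egorov makes the decay uniform on a large set, the resulting bound on $\|\wt T^{\eps_1}(f\mu)-\wt T^{\eps_2}(f\mu)\|_{L^2(\mu)}$ is (uniformly small quantity) $\times\log(\eps_1/\eps_2)$, which does not tend to zero. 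There is no quasi-orthogonality between scales in your argument that would convert $o(1)$ per-scale errors into a convergent series in $L^2$. Moreover, even if one had $L^2$-convergence, extracting an a.e.\ convergent subsequence $\wt T^{\eps_j}$ and invoking the maximal truncation does not yield convergence of the full family: $T^*$ dominates $\sup_\eps|\wt T^\eps(f\mu)|$ but gives no smallness for the oscillation between consecutive $\eps_j$ and $\eps_{j+1}$, and controlling that oscillation is again the same unsummable telescope.

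The paper's route (Theorem \ref{GenThm}, of which Theorem \ref{planethm} is the special case $\SK=\{c\mathcal{H}^s_L\}$) avoids any summation over scales by producing the candidate limit \emph{in advance}: the $L^2(\mu)$-boundedness gives a weak limit $\wt T(\mu)$ of $T_r(\mu)$, which at $\mu$-a.e.\ $x$ equals $\lim \dashint_{B}T(\chi_{B^c}\mu)\,d\mu$ over small balls $B$ with $x\in\frac12 B$ (Theorem \ref{Weak}). One then shows directly that $|T_r(\mu)(x)-\dashint_{B}T(\chi_{B^c}\mu)\,d\mu|\lesssim\delta$ for a ball $B$ of radius comparable to $r$; this comparison involves only a bounded number of scales between $r/M$ and $M^3r$ (the long-range, intermediate, and unit-scale estimates of Section \ref{threeestimates}), so only finitely many transportation numbers enter, each of which is small for $r$ small. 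That is the missing idea: anchor each truncation $T_r(\mu)(x)$ to the a priori existing weak limit rather than to another truncation $T_{r'}(\mu)(x)$ with $r'\ll r$. With that replacement your outline becomes the (simplified, reflection-symmetric-at-$x_0$) version of the paper's proof; the rest of your steps are sound.
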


  Theorems \ref{MVthm} and \ref{planethm} are both sharp for the $s$-Riesz kernel $K(x)=\frac{x}{|x|^{s+1}}$.  Indeed, we recall the following theorem, which is a consequence of results by Tolsa \cite{To4} and Ruiz de Villa-Tolsa \cite{RVT}.  For a relatively simple direct proof see Theorem 1.2 of \cite{JM}.

\begin{thm}\label{firstpaperthm}  Fix $\mu$ with $\overline{D}_{\mu,s}(x)<\infty$ for $\mu$-almost every $x\in \R^d$.  Suppose that the CZO associated to the $s$-Riesz kernel exists in principal value.  Then
\begin{enumerate}
 \item $s\not\in \mathbb{Z}$ and $\mu$ has zero density, or
 \item $s\in \mathbb{Z}$ and $\mu$ satisfies $\lim_{r\to 0}\alphaflat(B(x,r)=0$  for $\mu$-a.e $x\in \R^d$.
 \end{enumerate}
 \end{thm}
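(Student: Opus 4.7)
My plan is to reduce the statement to a rigidity theorem for Riesz-symmetric measures via a tangent-measure analysis, following the strategy exploited in \cite{To4, RVT, JM}. Fix a point $x\in\R^d$ at which the principal value integral exists and $\overline{D}_{\mu,s}(x)<\infty$; by hypothesis this holds for $\mu$-a.e.\ $x$. Define the rescaled measures $\mu_{x,r}(E):= r^{-s}\mu(x+rE)$, and let $\mathrm{Tan}(\mu,x)$ denote the set of weak-$*$ subsequential limits of $\{\mu_{x,r}\}_{r>0}$ as $r\to 0^+$. Finiteness of the upper density forces the family to be uniformly locally bounded, so $\mathrm{Tan}(\mu,x)\neq\emptyset$ and every $\nu\in\mathrm{Tan}(\mu,x)$ has locally finite $s$-growth.

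The central step is to show that every $\nu\in\mathrm{Tan}(\mu,x)$ is \emph{Riesz-symmetric}, i.e.\ its distributional $s$-Riesz transform vanishes on $\supp\nu$. To prove this I would test the truncated Riesz integrals for $\mu$ against a smooth bump like the $\varphi$ from the definition of $\alphaflat$, at the centers $x$ and $x+r\zeta$ for a generic $\zeta\in\supp(\mu_{x,r})$ near the origin. Because the principal value exists at $x$, the difference of the two centered integrals, divided by $r^s$, must tend to $0$ along any sub-sequence that also produces a tangent measure $\nu$. Passing to the weak-$*$ limit, this yields
$$\int K(z)\,\varphi(|z|)\,d\nu(z) = \int K(z-y)\,\varphi(|z-y|)\,d\nu(z) \quad \text{for every } y\in\supp\nu,$$
and varying the cutoff together with the oddness of $K$ yields vanishing of the distributional Riesz transform.

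With Riesz-symmetry of every tangent measure in hand, the dichotomy follows by invoking the rigidity classification: for $s\in\mathbb{Z}$, every Riesz-symmetric measure of locally finite $s$-growth is a constant multiple of $\mathcal{H}^s_L$ for some $L\in\mathcal{G}(s,d)$, while for $s\notin\mathbb{Z}$ no non-trivial such measure exists, the deep input being drawn from \cite{To4, RVT}. In the non-integer case, $\mathrm{Tan}(\mu,x)=\{0\}$ immediately gives $\overline{D}_{\mu,s}(x)=0$ by a standard tangent-measure argument. In the integer case, every tangent measure being flat translates to $\lim_{r\to 0}\alphaflat(B(x,r))=0$ by a compactness/contradiction argument: if not, extract $r_n\to 0$ with $\alphaflat(B(x,r_n))\geq c>0$ along which $\mu_{x,r_n}\rightharpoonup \nu$, and then $\nu$ would be flat by the previous step yet its own $\alphaflat(B(0,1))$ would be $\geq c$ by weak-$*$ continuity of the $\alpha$-functional against the continuous cutoff $\varphi$, a contradiction.

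The main obstacle is the second step: extracting, from the scalar assertion that ``the p.v.\ limit exists at $x$'', enough symmetry to force \emph{every} tangent measure to have vanishing Riesz transform on its support. This demands uniform control of tails and cut-off errors across blow-up scales, where the oddness of $K$ and the smoothness of $\varphi$ are essential. The non-existence of non-trivial Riesz-symmetric measures in the non-integer case is itself a major theorem, but it can now be invoked from \cite{To4, RVT} as a black box.
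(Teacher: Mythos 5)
You should first note that the paper does not actually prove Theorem \ref{firstpaperthm}: it is quoted as a consequence of Tolsa \cite{To4} and Ruiz de Villa--Tolsa \cite{RVT}, with a direct proof referenced in Theorem 1.2 of \cite{JM}. Your outline (blow up to tangent measures, show each tangent measure is symmetric for the Riesz kernel, invoke rigidity of symmetric measures, transfer back to the transportation numbers by compactness) is exactly the strategy of that cited direct proof, so in spirit you are on the intended route, and your final compactness step (from ``all tangent measures flat'' to $\lim_{r\to 0}\alphaflat(B(x,r))=0$) is fine modulo standard details.

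However, the central step as you describe it has a genuine gap. Existence of the principal value \emph{at the single point} $x$ only controls differences of truncations centered at $x$ itself; after blow-up this yields the symmetry condition for $\nu$ at the origin only. To get $\int_{a<|z-w|<b}K(z-w)\,d\nu(w)=0$ at \emph{every} $z\in\supp\nu$ you must control truncated transforms centered at the auxiliary points $x+r\zeta$, which are different points of $\supp\mu$; this requires $\mu$-a.e.\ existence, an Egorov-type uniformization on a closed set $E$ of large measure, and a density-point argument ensuring both that the comparison points can be taken in $E$ and that tangent measures of $\mu$ and of $\mu|_E$ at $x$ coincide. You name this as ``the main obstacle'' but give no mechanism; it is precisely the ``small local action'' machinery of \cite{JM}, not a routine detail. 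Relatedly, your displayed identity only says a smoothly truncated transform of $\nu$ is \emph{constant} on $\supp\nu$, and constancy does not give vanishing -- the vanishing comes from the Cauchy criterion for the p.v.\ at the base point across scales (also, the ``divided by $r^s$'' normalization is off, since the truncated Riesz integrals are already invariant under the rescaling $\mu_{x,r}$). Finally, the rigidity input is quoted in a stronger form than the sources provide: \cite{To4,RVT} do not state that every Riesz-symmetric measure of locally finite $s$-growth is a multiple of $\mathcal{H}^s_L$ (integer $s$) or zero (non-integer $s$); what is available are density-normalized statements (symmetric measures are uniformly distributed on their support, Marstrand/Mattila--Preiss type arguments, and for $s=1$, $d=2$ the Mattila/Huovinen classification). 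So you must produce tangent measures carrying mass at unit scale (possible at points of positive upper density; at points of zero upper density the conclusion is immediate) and then cite or prove the precise classification statement -- as written, the black-box attribution hides a nontrivial step on which your whole integer-case argument rests.
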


\begin{remark} It is well-known, see e.g. \cite{MM, Ver}, that if a measure $\mu$ is $s$-rectifiable\footnote{that is, $\mu $ is absolutely continuous with respect to $\mathcal{H}^s$, and its support is contained in the union of a countable number of $s$-dimensional Lipschitz (or $C^1$) submanifolds, up to an exceptional set of $s$-dimensional Hausdorff measure zero}, then any associated $s$-dimensional CZO $T$ exists in principal value integral.  However, there are many examples of measures $\mu$, see e.g. Section 5.8 of \cite{P}, whose support has locally finite $s$-dimensional measure and the condition (\ref{flatcond}) holds, but $\mu$ is not rectifiable.\end{remark}

Theorem \ref{firstpaperthm} shows that the additional condition placed on the measure in Theorems \ref{MVthm} and \ref{planethm} are necessary conditions for the $s$-Riesz transform to exist in principal value\footnote{However, it is not known whether (\ref{flatcond}) already follows from the $L^2(\mu)$ boundedness of the $s$-Riesz transform if $s=2,\dots, d-2$, which is (a generalization of) the aforementioned question of David-Semmes \cite{DS}.}.  Consequently they are the sharp conditions to consider when working with a large class of operators.  However, there are other CZOs of particular interest for which our method enables us to provide more information.  We illustrate this with the case of the Huovinen kernels.

\subsection{On the Huovinen kernels} A second goal of this work is to answer a question left open by the work \cite{JN3} and identify the geometric condition responsible for the difference between $L^2$ boundedness and existence of principle value for the CZO associated to the Huovinen kernel $K_k(z)=\frac{1}{|z|}\bigl(\frac{z}{|z|}\bigl)^k$, $k$ odd. %In the literature this kernel stands out as the simplest case for which the powerful curvature techniques are not available.

  A \emph{$k$-spike measure} associated to a subspace $L\in \mathcal{G}(1,2)$, and $\omega \in \mathbb{C}$, takes the form
\begin{equation}\nonumber \nu_{m,L,\omega}=\sum_{n=0}^{m-1} \mathcal{H}_{e^{\pi in/m}L+\omega}, \text{ where }m\in \mathbb{N}\text{ divides }k\, (\text{henceforth } m\mid k).
\end{equation}
We set $\text{Spike}_k$ to be the collection of all such $k$-spike measures over $L\in \mathcal{G}(1,2)$, $\omega\in \mathbb{C}$, and $m\mid k$.

For $z\in \mathbb{C}$, $r>0$, we proceed to define the modified transportation number $\alphaspike_{\mu}(B(z,r))$ by
$$\alphaspike_{\mu}(B(z,r)) =\inf_{\substack{\nu \in \text{Spike}_k
\\z \in \supp(\nu)}}\sup_{\substack{f\in \Lip_0(B(z,4r))\\\|f\|_{\Lip}\leq \tfrac{1}{r}}}\Bigl|\frac{1}{r}\int_{B(z,r)}\varphi\Bigl(\frac{|\cdot-z|}{r}\Bigl) f \;d(\mu-c_{\mu,\nu}\nu)\Bigl|,
$$
where $$%c_{\mu,\nu}(x,r):=
c_{\mu,\nu}= \int_{\R^d} \varphi\bigl(\tfrac{|\cdot-z|}{r}\bigl) \,d\mu\Bigl[\int_{\R^d} \varphi \bigl(\tfrac{|\cdot-z|}{r}\bigl) \,d\nu\Bigl]^{-1}.$$

\begin{thm} \label{HThm}  Fix $k$ odd and $\mu\in \mathcal{M}_1$.  Suppose that the CZO $T$ associated to the Huovinen kernel $K_k$ is bounded in $L^2(\mu)$.  Then $T$ exists in principal value if and only if $\lim_{r\to 0}\alphaspike_{\mu}(B(z,r))=0$ for $\mu$-almost every $z\in \mathbb{C}$.
\end{thm}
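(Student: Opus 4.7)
The plan is to prove both directions of Theorem \ref{HThm} by exploiting the central algebraic fact that the Huovinen kernel $K_k$ is blind to $k$-spike measures. A $k$-spike measure $\nu = \nu_{m,L,\omega}$ is invariant under rotation by $\pi/m$ about its apex $\omega$, while $K_k$ transforms under rotation as $K_k(R_\theta w) = e^{ik\theta} K_k(w)$. Since $k$ is odd and every divisor $m$ of $k$ is odd, the phase $e^{ik\pi/m}$ is never $1$; a change-of-variables argument then forces the smoothly truncated integral $\int K_k(\omega - w)\,\varphi(|\omega - w|/r)\,d\nu(w)$ to vanish for every $r>0$. For $z \in \supp(\nu) \setminus \{\omega\}$, the support of the cutoff $\{|z-w| \leq 4r\}$ intersects only a single line of $\nu$ once $r$ is small enough relative to $|z-\omega|$, and the analogous vanishing then follows from the oddness of $K_k$ along that line. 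Thus for every fixed $z \in \supp(\nu)$, the smoothly truncated operator $\int K_k(z-w)\,\varphi(|z-w|/r)\,d\nu(w)$ vanishes at all sufficiently small scales.

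For the sufficient direction $(\Leftarrow)$, I plan to follow the Mattila--Verdera scheme as adapted in the proof of Theorem \ref{planethm}. For each $\mu$-typical $z$ and each scale $r$, select a spike measure $\nu = \nu_{r,z}$ realizing (up to a factor of $2$) the infimum in $\alphaspike_\mu(B(z, Cr))$. The transport distance bounds the replacement error $|\int K_k(z-w)\,\varphi(|z-w|/r)\,d(\mu - c_{\mu,\nu}\nu)(w)|$ by a quantity comparable to $\alphaspike_\mu(B(z, Cr))$, while the spike symmetry from the previous paragraph kills the remaining spike-integral term at small scales. Telescoping the resulting inequality over dyadic scales yields a Cauchy property for $T^{\varphi}_r\mu(z)$ as $r \to 0$ at $\mu$-a.e.\ $z$ (summability guaranteed by $\alphaspike \to 0$ and a Borel--Cantelli-type argument together with the uniform bound $\mu \in \mathcal{M}_1$). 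Standard Cotlar-type and maximal-function estimates, leaning on the $L^2(\mu)$-boundedness of $T$, then pass from smooth truncations to sharp ones and from convergence against $\mu$ to convergence against an arbitrary complex measure $\nu$, yielding (\ref{PV}).

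For the necessary direction $(\Rightarrow)$, I would use tangent measures in conjunction with a rigidity theorem characterizing the symmetric measures of $K_k$. Assuming the principal value of $T$ exists $\mu$-a.e., a blow-up argument in the spirit of the proof of Theorem \ref{firstpaperthm} shows that at $\mu$-a.e.\ $z$, every tangent measure of $\mu$ at $z$ lies in $\mathcal{M}_1$ and is itself symmetric for $K_k$, in the sense that its principal-value integrals against $K_k$ vanish almost everywhere. The heart of this direction is then the classification statement: \emph{every} such symmetric measure in $\mathcal{M}_1$ belongs to $\text{Spike}_k$. This classification is where Mattila's symmetric measure machinery (together with Huovinen's prior analysis of the kernel) plays the central role, and is the main obstacle I anticipate: the rigidity must be sharp enough to rule out all non-spike angular configurations, with the quantization $m \mid k$ emerging precisely from the vanishing moment conditions dictated by the phase $e^{ik\pi/m} \neq 1$. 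Once the classification is in hand, the existence of a spike tangent measure at $\mu$-a.e.\ $z$ translates directly into $\lim_{r\to 0} \alphaspike_\mu(B(z, r)) = 0$, completing the proof.
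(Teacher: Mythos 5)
Your overall architecture (a Mattila--Verdera-type argument for sufficiency, classification of $K_k$-symmetric measures plus a blow-up for necessity) matches the paper's, and your necessity sketch is consistent with how the paper handles that direction (it simply cites Huovinen/[JM] for the classification and for the ``only if'' implication). But two steps in your sufficiency argument would fail as written. The first is the telescoping step: you bound the dyadic increments of the smoothly truncated potentials by the transport numbers and then claim summability is ``guaranteed by $\alphaspike_\mu \to 0$.'' It is not: the hypothesis is purely qualitative, and $\alphaspike_{\mu}(B(z,2^{-j}))\to 0$ gives no control whatsoever on $\sum_j \alphaspike_{\mu}(B(z,2^{-j}))$; a Dini-type condition is exactly what this theorem is designed to avoid assuming. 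The paper circumvents this by never telescoping: it uses the $L^2(\mu)$-boundedness to produce the Mattila--Verdera weak limit $\wt T(\mu)(x)=\lim \dashint_{B_r}T(\chi_{B_r^c}\mu)\,d\mu$ at $\mu$-a.e.\ $x$, and then compares $T_{r}(\mu)(x)$ to such an average at a \emph{single} bounded range of scales comparable to $r$, so only smallness (not summability) of the transport numbers is needed.

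The second gap is in the symmetry input. Your vanishing argument for $\int K_k(z-w)\varphi(|z-w|/r)\,d\nu(w)$ at $z\in\supp(\nu)\setminus\{\omega\}$ only covers $r\ll|z-\omega|$, where $B(z,4r)$ meets a single line. The scales that actually matter are those at which $\mu$ is transport-close to a spike whose apex lies inside the ball; there $B(z,4r)$ meets all $m$ lines in chords not symmetric about $z$, and the vanishing is a nontrivial algebraic identity (Huovinen's lemma, equivalently the statement $\mathcal{S}(\nu)=\supp(\nu)$ for $\nu\in\text{Spike}_k$, which uses $m\mid k$ and $k$ odd) that must be invoked rather than derived from oddness along one line. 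Moreover, once you run the averaging scheme that replaces your telescoping, a genuinely bilinear term of the form $\int_{B}T(\chi_{B^c}\nu)\,d\nu$ appears, and the pointwise symmetric-point identity does not kill it; one needs $\nu$ to be \emph{reflection} symmetric about the center of $B$, which spike measures fail at points other than the apex. This is precisely the difficulty the paper isolates for the Huovinen case and resolves by re-centering the averaging ball at a nearby point of reflection symmetry (the apex, at distance at most $\Theta r$ with $\Theta=2/\sin(\pi/k)$ -- Assumption (2) of the general theorem). Your proposal does not address this point, and without it the unit-scale term cannot be controlled.
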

The `only if' direction of this theorem was shown in Theorem 1.5 of \cite{JM}, which in turn was based upon Huovinen's work \cite{H}.

The example \cite{JN3} shows that $L^2(\mu)$-boundedness cannot imply by itself the existence of $T$ in the sense of principal value, so $L^2(\mu)$-boundedness does not imply the property $\lim_{r\to 0}\alphaspike_{\mu}(B(z,r))=0$ for $\mu$-almost every $z\in \mathbb{C}$.\\

Both Theorems \ref{planethm} and \ref{HThm} follow from a more general statement Theorem \ref{GenThm} below which relates the existence of principal value to the transportation distance to a certain collection of \emph{symmetric measures}\footnote{Introduced by Mattila \cite{M}.} associated to the operator, see Sections \ref{genthmsec} and \ref{applications}. %\textcolor{blue}{We do not know if these conditions are necessary, due to the lack of counterexamples. However, one could find some obstructions applying the reasoning in this paper if some such measures were to be found.  }
Theorem \ref{GenThm} is proved using the same basic scheme as the one followed by Mattila-Verdera to prove Theorem \ref{MVthm} (and we recover this theorem as a simple special case of Theorem \ref{GenThm}), but significant modifications are required, since dealing with transportation numbers introduces new geometric situations that do not arise in the set-up of \cite{MV}.

\subsection*{Acknowledgement}  We thank the referee for carefully reading the paper and making several remarks which have helped the presentation of the results.

\section{Preliminaries and Notation} \label{prem}
In this section we collect some definitions and preliminaries.
\begin{enumerate}
\item[\textbullet]  We shall denote by $C>0$ and $c>0$ respectively large and small constants that may change from line to line and can depend on $d$, $s$, the constant $C_K$ from Definition \ref{CZdef} and the quantities $\Theta$ and $\SB$ introduced in Assumptions \ref{SKdef} in Section \ref{genthmsec}.  By $A\lesssim B$, we shall mean that $A\leq CB$ for some constant $C>0$.  By $A\ll B$ we shall mean that $A\leq c_0B$ for some sufficiently small constant $c_0>0$ depending on $d,s,C_K$, $\SB$ and $\Theta$.
\item[\textbullet] We shall denote $\mathcal{G}(s,d)$ as the collection of s-dimensional linear subspaces of $\R^d$.
	\item[\textbullet] For $x \in \mathbb{R}^d$
	and $r>0$, $B(x,r)$ denotes the open ball centered at $x$ with radius $r$.
\item[\textbullet] For a function $f$ defined on an open set $U\subset \R^d$, define
$$\|f\|_{\Lip(U)} = \sup_{x,y\in U, \, x\neq y}\frac{|f(x)-f(y)|}{|x-y|}.
$$
In the case $U=\R^d$, we write $\|f\|_{\Lip}$ instead of $\|f\|_{\Lip(\R^d)}$.
\item[\textbullet] For an open set $U\subset \R^d$, define $\Lip_0(U)$ to be the collection of functions $f$ supported on a compact subset  of $U$ with $\|f\|_{\Lip(U)}<\infty$.
	\item[\textbullet] We denote by $\supp(\mu)$ the closed support of the measure $\mu$; that is,
	$$ \supp(\mu) = \mathbb{R}^d \setminus \{ \cup B: B \text{ is an   open ball with }\mu(B)=0 \}.$$
        	\item[\textbullet]  We denote by $\Ms$ the collection of non-negative measures $\mu$ satisfying the growth bound $\mu(B(x,r))\leq r^s$ for all $x\in \R^d$ and $r>0$.
        	\item[\textbullet] \label{etadef} For $\kap, r>0$ and $x\in \R^d$, set $\eta_{\kap,r,x}$ to be a non-negative radial function satisfying $\eta_{\kap, r,x}\equiv 1$ on $B(x,r)$, $\eta_{\kap, r,x} \equiv 0$ on $\R^d\backslash B(x,(1+\kap)r)$, and $\|\eta_{\kap,r,x}\|_{\Lip}\leq \frac{1}{\kap r}$.
     \item[\textbullet] We introduce the bump function $\varphi: [0,\infty) \mapsto [0,\infty)$, satisfying $\varphi \in C^\infty$, $\varphi \equiv 1$ on $(0,3)$, and $\supp \varphi \subset (0,4)$.
     \item[\textbullet] Given a Borel measure $\mu$, for any $x \in \mathbb{R}^d$ and $r >0$, we set
     $$D_\mu(B(x,r))=\frac{\mu(B(x,r))}{r^s},  \text{ and}$$
  $$\mathcal{I}_\mu(B(x,r)) = \int_{\mathbb{R}^d} \varphi\left(\frac{|x-y|}{r} \right) \,d\mu(y).$$
\item[\textbullet] Given a complex measure $\sigma$, we also define the truncated operators $T_r$:
$$ T_r (\sigma) (x)= \int_{\R^d \setminus B(x,r)} K(x-y) \,d\sigma(y),  \, x \in \R^d.$$
\item[\textbullet] Given a measure $\mu$ and $f \in L^1_{loc}(\mu)$, we define the average
$$\dashint_E fd\mu = \frac{1}{\mu(E)}\int_E fd\mu.$$

\end{enumerate}

\section{The general theorem} \label{genthmsec} We now move onto stating our general result.
Fix $s\in (0,d)$ (integer or not), and fix an $s$-dimensional Calder\'{o}n-Zygmund kernel $K$.

\subsection{Symmetric measures}  Two notions of symmetry of a measure will play a role in our results.  The first is Mattila's notion of a symmetric measure.

\begin{definition}[Symmetric measure] Let $\nu$ be a measure.

\begin{itemize}
\item A point $x\in \R^d$ is a  $K$-symmetric point for $\nu$, written $x\in \mathcal{S}(K,\nu):=\mathcal{S}(\nu)$, if
$$\int_{B(x,r)} K(x-y)|x-y|^s d\nu(y)=0 \text{ for every }r>0.
$$
\item The measure $\nu$ is a $K$-symmetric measure  if $\supp(\nu)\subset \mathcal{S}(\nu)$.
\item Set $\mathcal{S}_{K,s}:=\mathcal{S}_s = \{ \nu: \nu \text{ is }K\text{-symmetric}, \; \nu\in \mathcal{M}_s\}.$
\end{itemize}\end{definition}

We shall also require a local notion of reflection symmetry, defined as follows.

\begin{definition}[Reflection symmetric measure]  A measure $\nu$ is reflection symmetric about $z$ in a ball $B$ if, whenever $E\subset B$ is a Borel set, then $\nu(2z-E)=\nu(E)$.\end{definition}

We can now introduce our main assumptions required to state the general theorem.

 \begin{assump}\label{SKdef}
 Set $\SK$ to be any subset of $\mathcal{S}_{s}$ with the following properties:
\begin{enumerate}
\item \label{SymSmallBdary}(Small Boundaries) There exists $\SB >0$ such that whenever $\nu\in \SK$, $x\in \R^d$, and $r>0$ satisfy $B(x,r/2)\cap \supp(\nu)\neq \varnothing$, then for every $\tau\in (0,1]$,
    $$\nu(B(x, (1+\tau)r)\backslash B(x,r))\leq \SB\tau\nu(B(x,r)).
    $$
\item\label{SymReflSym} (Nearby Points of Reflection Symmetry) There exists $\Theta \geq 1$ such that for every $\nu\in \SK$, $r>0$, and $x\in \supp(\nu)$, the following alternative holds:
    \begin{enumerate}
    \item Either $\nu$ is reflection symmetric about $x$ in $B(x, r)$, or
    \item There exists $\wt{x}$ in $\supp(\nu)\cap B(x,\Theta r)$ such that $\nu$ is reflection symmetric about $\wt{x}$ in $B(\wt{x},64\Theta r)$.
    \end{enumerate}
\end{enumerate}
\end{assump}
For a measure $\mu$, we define the transportation distance to a measure $\nu$ on the scale $B(x,r)$ by
$$\alpha_{\mu,\nu}(B(x,r))=\sup_{f\in \F_{x,r}}\Bigl|\frac{1}{r^s}\int_{\R^d}\varphi\Bigl(\frac{|x-y|}{r}\Bigl)f(y)d[\mu-c_{\mu, \nu}\nu](y)\Bigl|,$$
where $\F_{x,r} = \{f\,\in\, \lip_0(B(x,4r)),\;\|f\|_{\lip}\leq \frac{1}{r}\}$,
and
$$c_{\mu, \nu}(x,r) :=c_{\mu, \nu}= \begin{cases}\displaystyle \frac{\I_{\mu}(B(x,r))}{\I_{\nu}(B(x,r))} \text{ if }\I_{\nu}(B(x,r))>0,\\
\;\;\;\;0\;\;\;\;\;\;\;\;\;\;\;\;\text{ if }\I_{\nu}(B(x,r))=0.\end{cases}$$
The transportation distance of $\mu$ to the class $\SK$ on the scale $B(x,r)$ is then given by
$$\alpha_{\mu,\SK}(B(x,r)) = \inf_{\substack{\nu\in \SK:\, x\in \mathcal{S}(\nu)\\ \supp(\nu)\cap B(x,r/8)\neq\varnothing}} \alpha_{\mu,\nu}(B(x,r)).
$$

We now proceed to state the main theorem of the paper.
\begin{thm}\label{GenThm}  Fix $\mu\in \Ms$.  Suppose that the $s$-dimensional CZO associated to $K$ is bounded in $L^2(\mu)$.  Assume also that $$\lim_{r\to 0}\alpha_{\mu,\SK}(B(x,r))=0$$ for $\mu$-almost every $x\in \R^d$.  Then for every finite complex measure $\sigma$,
$$\lim_{r\to 0}\int_{|x-y|>r}K(x-y)d\sigma(y) \text{ exists for }\mu\text{-almost every }x\in \R^d.
$$
\end{thm}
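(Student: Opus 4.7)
The plan is to follow the scheme used by Mattila--Verdera to prove Theorem~\ref{MVthm} in \cite{MV}, but with the zero-density input replaced by the transportation-distance hypothesis. First I would carry out the standard reductions. Decomposing $\sigma = \sigma_s + h\, d\mu$ with $\sigma_s \perp \mu$, the singular part has truncated integrals tending to $0$ $\mu$-almost everywhere, using the $s$-growth of $\mu$ together with a Lusin-type argument on the set where $\sigma_s$ lives; the absolutely continuous part reduces via linearity and an $L^1(\mu)$ approximation to the case $\sigma = \mu_E$ for a bounded set $E$. A second reduction replaces the sharp truncation $T_r$ by the smooth truncation $\wt{T}_r\mu(x) := \int K(x-y)\bigl(1-\varphi(|x-y|/r)\bigr)\,d\mu(y)$, the difference being handled with the growth bound $\mu \in \Ms$.

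Next I would record the key symmetry identity: if $\nu \in \SK$ and $x \in \mathcal{S}(\nu)$, then for every smooth $g$ compactly supported in $(0,\infty)$,
\[
\int K(x-y)\, g(|x-y|)\, d\nu(y) = 0.
\]
This follows from the defining property $\int_{B(x,r)} K(x-y)|x-y|^s d\nu(y) = 0$ for all $r > 0$ via layer-cake integration, and in particular makes $r \mapsto \wt{T}_r\nu(x)$ constant for $x \in \mathcal{S}(\nu)$. Combining this with the transportation-distance approximation gives the core pointwise estimate at adjacent scales. Choosing $\nu = \nu_{x,2r} \in \SK$ which nearly realizes the infimum defining $\alpha_{\mu,\SK}(B(x,2r))$ with $x \in \mathcal{S}(\nu)$, one has
\[
\wt{T}_r\mu(x) - \wt{T}_{r/2}\mu(x) = \int K(x-y)\bigl[\varphi(|x-y|/(r/2)) - \varphi(|x-y|/r)\bigr]\, d(\mu - c_{\mu,\nu}\nu)(y),
\]
the $c_{\mu,\nu}\nu$ contribution dropping out by the symmetry identity. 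The integrand has sup norm $\lesssim r^{-s}$, Lipschitz norm $\lesssim r^{-(s+1)}$, and support in $B(x,4r)\setminus B(x,3r/2)$, so pairing it with the transportation distance at scale $2r$ yields $|\wt{T}_r\mu(x) - \wt{T}_{r/2}\mu(x)| \lesssim \alpha_{\mu,\SK}(B(x,2r))$.

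The main obstacle is promoting this to a Cauchy statement. Naive telescoping over dyadic scales gives only $|\wt{T}_{r_1}\mu(x) - \wt{T}_{r_2}\mu(x)| \lesssim \sum_j \alpha_{\mu,\SK}(B(x,2^{-j+1} r_1))$, which need not be a convergent sum even though every term tends to $0$ $\mu$-a.e. To sidestep this, I would pass to an $L^2$-based argument on the localized measure $\mu_E$. Using the $L^2(\mu)$-boundedness of $T$ together with Assumptions~\ref{SKdef}(1)--(2) (the small-boundary property and the presence of a nearby reflection-symmetry center for every $\nu \in \SK$), the goal is to establish a quasi-orthogonality estimate for the annular differences $\Delta_j := \wt{T}_{r_j}\mu_E - \wt{T}_{r_{j+1}}\mu_E$ of the form
\[
\bigl\|\wt{T}_{r_1}\mu_E - \wt{T}_{r_2}\mu_E\bigr\|_{L^2(\mu_E)}^2 \lesssim \int_E \sum_j \alpha_{\mu,\SK}(B(x,2r_j))^2 \, d\mu(x).
\]
Since $\alpha_{\mu,\SK}(B(x,r)) \to 0$ $\mu$-almost everywhere and is uniformly bounded (with bound depending on $\mu \in \Ms$), the right side tends to $0$ as $r_1, r_2 \to 0$ by dominated convergence, so $\wt{T}_r\mu_E$ is Cauchy in $L^2(\mu_E)$.

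Finally, I would upgrade strong $L^2$ convergence to $\mu$-a.e.\ convergence using the uniform $L^2(\mu)$ boundedness of the maximal truncated operator (Cotlar's inequality applied to $T$): picking a subsequence that converges $\mu$-a.e.\ and invoking uniqueness of the limit to promote this to convergence of the full family, and then handling the difference between the smooth and sharp truncations in the same manner. The delicate point in this plan is the quasi-orthogonality in the $L^2$ step; this is where Assumptions~\ref{SKdef}(1)--(2) enter in an essential way, because one needs structural regularity of the approximating $\nu \in \SK$ (at all scales around nearby points of reflection symmetry) to compare the local substitutions $\mu \leftrightarrow c_\nu \nu$ across different dyadic scales without creating large cross-terms.
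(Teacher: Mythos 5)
Your preliminary steps are sound: the reduction to $\sigma=\mu$ (the paper quotes this from Tolsa), the symmetry identity via layer-cake integration (Remark \ref{intsym} in the paper), and the single-scale estimate $|\wt{T}_r\mu(x)-\wt{T}_{r/2}\mu(x)|\lesssim \alpha_{\mu,\SK}(B(x,2r))$ are all correct, and you correctly identify that telescoping these estimates fails because $\sum_j\alpha_{\mu,\SK}(B(x,2^{-j}r))$ need not converge. The gap is in your proposed resolution. First, the quasi-orthogonality inequality $\|\wt{T}_{r_1}\mu_E-\wt{T}_{r_2}\mu_E\|_{L^2(\mu_E)}^2\lesssim \int_E\sum_j\alpha_{\mu,\SK}(B(x,2r_j))^2\,d\mu$ is only stated as a goal, not derived, and it is far from clear how Assumptions \ref{SKdef} would produce it. Second, and more fatally, even if that inequality held, its right-hand side does \emph{not} tend to zero under the hypothesis $\lim_{r\to 0}\alpha_{\mu,\SK}(B(x,r))=0$: as $r_1,r_2\to 0$ the number of dyadic scales $j$ between $r_2$ and $r_1$ is unbounded, so dominated convergence does not apply. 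For instance $\alpha_j=j^{-1/2}$ tends to zero while $\sum_{j=J}^{2J}\alpha_j^2$ stays bounded below. You would need a square-Dini (Carleson-type) condition on the transportation numbers, which is strictly stronger than what the theorem assumes. So your plan has merely traded a divergent sum of first powers for a possibly divergent sum of squares.

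The paper avoids any summation over scales by a different mechanism: the $L^2(\mu)$ boundedness yields (Theorem \ref{Weak}) a weak limit $\wt{T}(\mu)$ together with the $\mu$-a.e.\ representation $\wt{T}(\mu)(x)=\lim \frac{1}{\mu(B)}\int_B T(\chi_{B^c}\mu)\,d\mu$ over balls $B$ with $x\in\frac12 B$. One then shows, for each \emph{single} small scale $r$, that $T_r(\mu)(x)$ differs from such an average over a ball of comparable radius by $O(\delta)$, using the smallness of the transportation number only at the finitely many scales in $[r/M,M^3 r]$ (after first passing to a doubling scale via the David--Mattila lemma, and splitting into long-range, intermediate, and unit-scale contributions, the last of which exploits the reflection symmetry from Assumptions \ref{SKdef}). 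Since the averages converge to $\wt{T}(\mu)(x)$, so does $T_r(\mu)(x)$. Anchoring each scale to this independently existing limit is the idea your proposal is missing; without it, or without strengthening the hypothesis to a Dini condition, the argument does not close.
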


\begin{remark}  Theorem \ref{GenThm} leads to the natural question of whether the Assumptions \ref{SKdef} are necessary.  It would certainly be a very nice result if one could show that, in Theorem \ref{GenThm}, one could take $\SK$ to be the whole class $\mathcal{S}_s$ of symmetric measures.  We do not have a counterexample to show this cannot be true.
\end{remark}

\subsection{Scheme of proof}
Before finishing this section, we comment on the scheme of the proof.
In Section \ref{reduction} we recall a well-known reduction that it suffices to only prove the existence of the limit (\ref{PV}) with $\nu=\mu$. Our goal will be to prove that the principal value integral $\lim_{r\to 0}T_r(\mu)$  equals the Mattila-Verdera weak limit function $\widetilde{T}(\mu)(x) = \lim_{r\to 0}\frac{1}{\mu(B(x,r))}\int_{B(x,r)}T(\chi_{\R^d\backslash B(x,r)}\mu)d\mu$ for $\mu$-almost every $x\in \R^d$ (the existence of the Mattila-Verdera weak limit function is ensured by the $L^2(\mu)$ boundedness of the operator, see Theorem \ref{Weak}). To implement this idea will require a careful study of several truncated integrals where the geometry imposed by the condition $\lim_{r \to 0} \alpha_{\mu,\SK}(B(x,r))=0$ will be essential (see Section \ref{estA}).

Observe that the transportation coefficient $\alpha_{\mu, \mathbb{S}_s}(B(x,r))$ is small in two scenarios.  Either
\begin{itemize}
\item the density $\frac{\mu(B(x,4r))}{r^s}$ is small, or
\item $\mu$ inherits (to some extent) the geometric structure of some symmetric measure $\nu\in \mathbb{S}_s$ in the ball $B(x,4r)$.
\end{itemize}
We will often argue via this alternative in what follows, as we may have that $\lim_{r\to 0}\alpha_{\mu,\SK}(B(x,r))=0$ as a result of alternating between these two scenarios.  In the case of the Mattila-Verdera theorem (Theorem \ref{MVthm}), only the first scenario occurs.

It is worth remarking that, if one only wants to prove Theorem \ref{planethm}, then the proof that follows can be simplified in several places. However the proof of Theorem \ref{HThm} requires a more careful study due to the fact that the symmetric measures in this case (spike measures) are not reflection symmetric at every point and scale.

As is common in analysis on non-homogenous spaces, we will look to carry out our analysis on doubling scales (where $\mu(B(x,Ar))$ is not much larger than $\mu(B(x,r))$, for some fixed $A$).  We shall therefore revisit some of the basic tools introduced by David-Mattila \cite{Dav3, DM}.

\section{Main applications}\label{applications}
In this section we study two instances in which the general theorem Theorem \ref{GenThm} can be applied, resulting in recovering Theorem \ref{MVthm} as well as proving Theorems \ref{planethm} and \ref{HThm}.

\subsection{Theorem \ref{MVthm}} We first remark that in order to recover the Mattila-Verdera theorem (Theorem \ref{MVthm}), we consider the case when the collection $\SK$ consists of the zero measure. Then $\lim_{r\to 0}\alpha_{\mu,\SK}(B(x,r))=0$ at $x\in \R^d$ if and only if $\overline{D}_{\mu,s}(x)=0$. In fact, in this case, our proof essentially collapses to the proof found in \cite{MV}.

\subsection{Theorem \ref{planethm}} Fix $s \in \mathbb{Z}$ with $s\in (0,d)$, and  let $$\SK=\{\nu = c\mathcal{H}^s_L:  L \in \mathcal{G}(s,d), \, c>0,\, \nu\in \Ms\}.$$  It is clear that for any $s$-dimensional CZ kernel, every measure in $\SK$ is a symmetric measure.
Therefore, if $\alphaflat(B(x,r)) \to 0$ as $r \to 0$, then $\alpha_{\mu,\SK}(B(x,r)) \to 0$ as $r \to 0$.

Every measure in $\SK$ fulfils the properties concerning power growth, small boundaries and nearby points of reflection symmetry from Assumptions \ref{SKdef}. For the property of nearby points of reflection symmetry, notice that every point in the $s$-plane is a reflection symmetric point and hence satisfies part (a) of the definition.  Therefore Theorem \ref{GenThm} is applicable and Theorem \ref{planethm} follows.

\subsection{Theorem \ref{HThm}}In the case of Huovinen kernels $K_k(z)=\frac{z^k}{|z|^{k+1}}$ where $k$ is odd, we set $\mathbb{S}_{1}=\mathcal{S}_{K_k,1}$ and we recall the following theorem, which is essentially due to Huovinen \cite{H} (see Theorem 1.5 of \cite{JM}).

\begin{thm} If $s=1$ then $$\mathcal{S}_{K_k,1}= \{c\nu: c\geq 0, \,\nu\in \text{Spike}_k, \, c\nu\in \mathcal{M}_1\}$$ and for $\nu \in \text{Spike}_k$, $\mathcal{S}(\nu)=\supp(\nu)$.
\end{thm}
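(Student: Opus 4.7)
My plan is to split the statement into two pieces: (a) the classification $\mathcal{S}_{K_k,1}\cap\mathcal{M}_1 = \{c\nu : c\ge 0,\, \nu\in\text{Spike}_k,\, c\nu\in\mathcal{M}_1\}$, and (b) the identity $\mathcal{S}(\nu)=\supp(\nu)$ for every $\nu\in\text{Spike}_k$. The hard direction of (a) --- that every $K_k$-symmetric measure of linear growth is a non-negative multiple of a spike measure --- is essentially due to Huovinen \cite{H} and is spelled out in Theorem 1.5 of \cite{JM}; I would invoke it as a black box. What remains is the easy direction of (a) together with (b), amounting to a direct symmetry verification at every support point of each spike measure, plus the check that no point outside $\supp(\nu)$ is $K_k$-symmetric.

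After a rigid motion I may assume $\nu = \sum_{j=0}^{m-1}\mathcal{H}^1_{\ell_j}$ with $\ell_j = e^{i\pi j/m}\mathbb{R}$; note that $m\mid k$ together with $k$ odd forces $m$ itself to be odd. Using $K_k(x-y)|x-y| = ((x-y)/|x-y|)^k$, the symmetry condition at $x$ is that $\sum_j I_j(x,r)=0$ for every $r>0$, where $I_j(x,r) = \int_{\ell_j\cap B(x,r)} ((x-y)/|x-y|)^k\, d\mathcal{H}^1(y)$. My first step is to dispose of every line that contains $x$: parametrizing $y = se^{i\pi j/m}$ and $x = s_x e^{i\pi j/m}$ when $x\in\ell_j$, the integrand collapses to $e^{ik\pi j/m}\mathrm{sgn}(s_x-s)^k$, which integrates to zero over the symmetric interval $\{|s-s_x|<r\}$ because $k$ is odd. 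This already handles the case $x=0$ (which lies on every line) and the unique line $\ell_n$ when $x\in\ell_n\setminus\{0\}$.

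For the remaining lines $\ell_j$ with $j\ne n$ I would combine two reflections. First, the map $y\mapsto 2p_j-y$ on $\ell_j$ about the foot $p_j$ of the perpendicular from $x$ preserves $\mathcal{H}^1_{\ell_j}$ and the slice $\ell_j\cap B(x,r)$; decomposing $x-y = a+b$ with $a = ie^{i\pi j/m}\alpha$ perpendicular to $\ell_j$ and $b = e^{i\pi j/m}\beta$ along $\ell_j$ (with $\alpha,\beta\in\mathbb{R}$), expanding $(a+b)^k+(a-b)^k$ in even powers of $b$, and using that $i^{k-l}\in i\mathbb{R}$ whenever $k$ is odd and $l$ is even, gives $(a+b)^k+(a-b)^k = 2ie^{ik\pi j/m}R$ for some real $R$. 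Averaging against $|x-y|^{-k}$ then produces $I_j = ie^{ik\pi j/m}\alpha_j$ for some $\alpha_j(x,r)\in\mathbb{R}$. Next, the reflection $\sigma_n(z) = e^{2\pi in/m}\bar z$ across $\ell_n$ fixes $x$, preserves $\nu$, and sends $\ell_j$ to $\ell_{2n-j\bmod m}$; the change of variables $y=\sigma_n(z)$ yields $I_j = e^{2\pi ikn/m}\overline{I_{2n-j}}$, and combined with the previous representation this forces $\alpha_j = -\alpha_{2n-j}$. Since $m\mid k$ makes $e^{ik\pi j/m} = e^{ik\pi(2n-j)/m}$, each pair satisfies $I_j + I_{2n-j}=0$. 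Because $m$ is odd, the involution $j\mapsto 2n-j\bmod m$ on $\{0,\dots,m-1\}$ has $n$ as its only fixed point, so the remaining indices pair off cleanly and $\sum_j I_j = 0$.

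For the reverse containment $\mathcal{S}(\nu)\subseteq\supp(\nu)$, I would fix $x\notin\supp(\nu)$ and take $r$ just above $\dist(x,\supp(\nu))$ so that $B(x,r)$ meets only the line(s) closest to $x$; a short calculation in the spirit of the pairing step above --- now with $a\ne 0$ controlling the leading term --- yields a nonzero value of $I(x,r)$. The main obstacle in the overall argument is part (a), whose difficult direction rests on Huovinen's substantial rigidity machinery and cannot be handled by elementary means. By contrast, the symmetry verification is elementary but turns delicately on both hypotheses: if $k$ were even then step one fails, and if $m\nmid k$ then the phase identity $e^{ik\pi j/m}=e^{ik\pi(2n-j)/m}$ in the final cancellation fails.
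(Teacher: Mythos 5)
Your proposal and the paper take the same essential route for the substantive content: the paper gives no proof of this theorem at all, simply attributing it to Huovinen \cite{H} via Theorem 1.5 of \cite{JM}, and you likewise invoke that source as a black box for the hard classification direction. What you add --- the explicit verification that spike measures are $K_k$-symmetric at every support point --- is correct and complete: the oddness of $k$ kills the lines through $x$, the perpendicular reflection puts each remaining $I_j$ on the ray $i e^{ik\pi j/m}\mathbb{R}$, the reflection across $\ell_n$ together with $m\mid k$ forces $\alpha_j=-\alpha_{2n-j}$, and the oddness of $m$ makes $j\mapsto 2n-j$ a fixed-point-free pairing away from $n$. All of these steps check out.

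The one place where your write-up is thinner than it should be is the converse inclusion $\mathcal{S}(\nu)\subseteq\supp(\nu)$. For $x\notin\supp(\nu)$ and $r$ slightly above $\dist(x,\supp(\nu))$, the leading term of $\sum_j I_j(x,r)$ as $r\downarrow\dist(x,\supp(\nu))$ is a constant multiple of $\sqrt{r^2-\rho^2}\sum_a(-1)^{j_ak/m}\operatorname{sgn}(\alpha_{j_a})$, summed over the lines realizing the minimal distance $\rho$. When $x$ lies on an angle bisector, exactly two lines tie, and this signed sum of $\pm1$'s can a priori vanish; one must check (using that $k/m$ is odd and $m$ is odd) that the cancellation condition is equivalent to $\arg(x-\omega)$ being a multiple of $\pi/m$, i.e.\ to $x$ lying on one of the lines of the spike after all --- which is excluded. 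So your assertion that ``a short calculation yields a nonzero value'' is true, but the calculation is not quite short and the two-line tie is a genuine case that needs to be addressed; as written this is a small gap in an otherwise sound argument.
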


As a consequence of the theorem, we have that $\alpha_{\mu, \mathbb{S}_1}(B(z,r)) = \alphaspike_{\mu}(B(z,r))$ for $z\in \mathbb{C}$ and $r>0$.

In \cite{JM}, it was proved that the existence of $T$ in the sense of principal value implies that $\lim_{r \to 0} \alpha_{\mu, \mathcal{S}_s}(B(z,r)) = 0$ for $\mu$-a.e. $z \in \mathbb{C}$ (see Theorem 1.4 and Proposition A.1 in \cite{JM}).

To conclude Theorem \ref{HThm} we need to verify that Assumptions \ref{SKdef} are satisfied for the collection of measures $\mathbb{S}_1=\mathcal{S}_{K_k,1}$.  It is clear that such measures satisfy the small boundaries condition. Regarding the nearby points of reflection symmetry property, let $\nu=c\sum_{n=0}^{m-1} \mathcal{H}_{e^{\pi in/m}L+z} \in \mathcal{S}_{K,1} $ for some $m\mid k$, $L \in \mathcal{G}(1,2)$ and $z \in \mathbb{C}$; and  let $y \in \supp(\nu)$. If $\nu$ is not reflection symmetric about $y$ in $B(y,r)$, then we have that $|y-z| \leq \frac{r}{\sin(\pi/k)}$ ($m\leq k$) and $\nu$ is reflection symmetric about $z$ in $B(z,R)$ for every $R>0$, hence we may choose $\Theta=\frac{2}{\sin(\pi/k)}$.

\section{Reduction to the case $\nu =\mu$ in (\ref{PV})}\label{reduction}
We next record a standard result, whose proof may be found in Section 8.2.1. of Tolsa \cite{To5}.
\begin{thm}
Fix $\mu\in \Ms$, and a CZO kernel $K$.  Suppose that the CZO $T$ associated to $K$ is bounded in $L^2(\mu)$. If $(\ref{PV})$ holds with $\nu=\mu$, then $(\ref{PV})$ holds for every $\nu \in M(\R^d)$.
\end{thm}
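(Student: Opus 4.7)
The plan is to carry out the standard non-homogeneous Calder\'on--Zygmund reduction, following the approach in \cite{To5}. By linearity, the Jordan decomposition, and the local character of the limit in (\ref{PV}), it suffices to establish (\ref{PV}) for a finite positive Borel measure $\nu$. I would apply the Radon--Nikodym theorem to write $\nu = f\mu + \sigma$ with $f \in L^1(\mu)$ and $\sigma \perp \mu$, and then handle the two pieces separately.

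For the absolutely continuous piece $f\mu$, I would first reduce to $f \in L^\infty(\mu)$ by invoking a weak-$(1,1)$ estimate with respect to $\mu$ for the maximal truncated operator $T_*(g\mu)(x) = \sup_{r>0}|T_r(g\mu)(x)|$; this is available in the non-homogeneous setting from the $L^2(\mu)$-boundedness of $T$ and $\mu \in \Ms$ via the standard Calder\'on--Zygmund decomposition, and it allows us to transfer convergence statements from a dense subclass to all of $L^1(\mu)$. For bounded $f$, I would approximate uniformly by simple functions $\sum_j c_j\chi_{E_j}$, and for each $\chi_{E_j}\mu$ compare $T_r(\chi_{E_j}\mu)(x)$ with $T_r\mu(x)$: the contribution from the far part of $E_j$ is controlled by the hypothesis applied to $\mu$, while the contribution from $E_j \cap B(x,R)$ for small $R$ is controlled pointwise $\mu$-almost everywhere by standard non-homogeneous maximal estimates.

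For the singular part $\sigma$ the main obstacle is the absence of any a priori growth condition on $\sigma$, so the integrals $T_r\sigma(x)$ cannot be estimated by elementary means. The key input is a Cotlar-type inequality which, under $\mu \in \Ms$ and the $L^2(\mu)$-boundedness of $T$, yields $T_*\sigma(x) < \infty$ for $\mu$-almost every $x$. Using this pointwise finiteness, together with the fact that $\sigma$ is concentrated on a $\mu$-null Borel set $A$, one can then truncate $\sigma$ near a $\mu$-typical point $x$ at arbitrarily small scales and show that the Cauchy oscillation $|T_{r_1}\sigma(x) - T_{r_2}\sigma(x)|$ tends to zero as $r_1,r_2 \to 0$. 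This control of the singular part, rather than the absolutely continuous one, is the most delicate part of the argument, and it is precisely where the non-homogeneous machinery developed in \cite{To5} is essential.
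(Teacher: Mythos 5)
The paper does not prove this statement itself; it cites Section 8.2.1 of Tolsa's book \cite{To5}, and your overall architecture (Jordan and Lebesgue decompositions, a weak $(1,1)$ bound for the maximal truncation, a density argument, and a separate treatment of the singular part) is indeed the architecture of that proof. There is, however, a genuine gap in your treatment of the absolutely continuous part: the dense subclass you pick is one for which the base case cannot be established by the comparison you describe. Approximating by simple functions forces you to prove that $\lim_{r\to 0}T_r(\chi_E\mu)(x)$ exists $\mu$-a.e.\ for an \emph{arbitrary} Borel set $E$. Writing
\[
T_{r_2}(\chi_E\mu)(x)-T_{r_1}(\chi_E\mu)(x)=\int_{r_2<|x-y|\leq r_1}K(x-y)\,d\mu(y)-\int_{r_2<|x-y|\leq r_1}K(x-y)\chi_{E^c}(y)\,d\mu(y),
\]
the first term tends to zero by hypothesis, but the second can only be bounded by $\int_{B(x,r_1)\setminus B(x,r_2)}|x-y|^{-s}\chi_{E^c}\,d\mu$, and even at a $\mu$-density point of $E$ this need not vanish: the density defect $\mu(B(x,\rho)\setminus E)/\rho^{s}$ is $o(1)$ but need not be summable over dyadic scales, so the logarithmically many annuli between $r_2$ and $r_1$ can contribute an amount bounded away from zero. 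Maximal estimates give boundedness, not convergence of the oscillation. The correct dense class is compactly supported \emph{Lipschitz} functions: for such $f$,
\[
T_r(f\mu)(x)=f(x)\,T_r(\mu)(x)+\int_{|x-y|>r}K(x-y)\bigl(f(y)-f(x)\bigr)\,d\mu(y),
\]
where the second integral converges absolutely as $r\to 0$ because $|K(x-y)||f(y)-f(x)|\lesssim \|f\|_{\Lip}|x-y|^{1-s}$ is $\mu$-integrable near $x$ for $\mu\in\Ms$, and the first converges by hypothesis; the weak $(1,1)$ inequality for $f\mapsto T_*(f\mu)$ then transfers the conclusion to all of $L^1(\mu)$, in particular to every $\chi_E$.

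Your singular part is also underspecified: pointwise finiteness of $T_*\sigma$ does not by itself force the oscillation $|T_{r_1}\sigma(x)-T_{r_2}\sigma(x)|$ to vanish, and ``truncating $\sigma$ near a $\mu$-typical point'' is not meaningful when the carrier of $\sigma$ is dense. What is actually needed is the quantitative weak-type bound $\mu\{x:T_*\nu(x)>\lambda\}\lesssim \|\nu\|/\lambda$ for \emph{arbitrary} finite measures $\nu$, which is what the Cotlar-type inequality of \cite{NTV3, To6} delivers. One then takes a Borel carrier $A$ of $\sigma$ with $\mu(A)=0$, chooses a compact $F\subset A$ with $\sigma(A\setminus F)<\eps$ by inner regularity, notes that $T_r(\chi_F\sigma)(x)$ is eventually constant in $r$ for every $x\notin F$ (hence for $\mu$-a.e.\ $x$, since $F$ is closed and $\mu(F)=0$), and controls $T_*(\sigma-\chi_F\sigma)$ in $\mu$-measure by the weak-type bound. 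With these two repairs your argument becomes the proof in \cite{To5}.
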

Therefore, in order to prove Theorem \ref{GenThm}, it suffices to prove the existence of the limit in $(\ref{PV})$ in the case when $\nu=\mu$.

\section{Introductory lemmas}
In this section, we present some lemmas and remarks that will be used extensively throughout the paper.

\begin{remark}\label{intsym} Suppose $\nu$ is a measure, and $K$ an $s$-dimensional CZ-kernel.  Notice that if $x \in \mathcal{S}(\nu)$, then if $\varphi: \mathbb{R} \to \mathbb{R}$ is a Lipschitz function with $\int_0^\infty |\varphi'| \,dr < \infty$ and $\int |K(x-y)||x-y|^s |\varphi(|x-y|)| \,d\nu(y) < \infty$, then
     \begin{equation*} \begin{split}
      \int &K(x-y)|x-y|^s \varphi(|x-y|) \,d\nu(y) \\ & = -\int_0^\infty  \int_{B(x,r)} K(x-y)|x-y|^s \varphi'(r) \,d\nu(y) \,dr=0.
      \end{split}\end{equation*}
\end{remark}

\begin{remark}[Scaling]\label{scaling}  We will often rescale estimates to prove them for a unit scale. To this end we make some simple observations.  Fix $x\in \R^d$ and $r>0$.  For a measure $\nu$, set $\nu_{x,r}:=\frac{\nu(r\cdot+x)}{r^s}$. Observe that $\nu\in \Ms$ if and only if $\nu_{x,r}\in \Ms$.
\begin{enumerate}
\item Firstly, if $K$ is a CZ kernel, then the kernel
$$K_r=r^s K(r \,\cdot)$$ is a CZ kernel with the same constants (i.e. we can take $C_{K_r} = C_K$).
\item Secondly, a function $f\in \mathcal{F}_{x,r}$ if and only if the function $$f(r\cdot+x)\in \mathcal{F}_{0,1}.$$
\item Thirdly, for a collection $\mathbb{S}_{s}$ of symmetric measures associated to $K$ satisfying Assumptions \ref{SKdef}, the collection $$\widetilde{\mathbb{S}}_s=\{\nu_{x,r}:=\frac{\nu(r\cdot+x)}{r^s}: \mu\in \mathbb{S}_s\}$$ is a collection of symmetric measures associated to $K_r$ satisfying the Assumptions \ref{SKdef} (with the same constants $\Theta$ and $\SB$).
\item Combining the two previous observations we have
    $$\alpha_{\mu, \mathbb{S}_s}(B(x,r)) = \alpha_{\mu_{x,r}, \widetilde{\mathbb{S}}_s}(B(0,1)).
    $$
    \end{enumerate}
\end{remark}
\begin{lemma}\label{smoothK}Fix $\kap>0$.  Suppose that $\psi$ is a bounded Lipschitz continuous function satisfying $\psi\equiv 0$ on $B(0, \kap)$.  For every $x\in \R^d$, the function
$$F(y) =K(x-y)\psi(|x-y|)
$$
is a bounded Lipschitz continuous function, with \begin{equation}\label{fbounds}\|F\|_{\infty}\lesssim \frac{\|\psi\|_{\infty}}{\kap^s}\text{ and }\|F\|_{\Lip}\lesssim\frac{\|\psi\|_{\infty}}{\kap^{s+1}}+\frac{\|\psi\|_{\Lip}}{\kap^{s}}.\end{equation}
\end{lemma}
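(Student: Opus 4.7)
The two claimed bounds decouple naturally, and I would establish the sup-norm estimate first so it can be re-used in the Lipschitz argument. Since $\psi \equiv 0$ on $B(0,\kap)$, the function $F$ vanishes on $B(x,\kap)$, and for $|x-y| \geq \kap$ property~(\ref{size}) from Definition~\ref{CZdef} gives $|F(y)| \leq C_K \|\psi\|_\infty / |x-y|^s \leq C_K \|\psi\|_\infty / \kap^s$. This yields the first inequality in~(\ref{fbounds}) immediately.

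For the Lipschitz estimate I would fix $y, y' \in \R^d$ and split according to whether $|y-y'|$ is large or small compared to $\kap$. If $|y-y'| \geq \kap/4$, I simply use $|F(y)-F(y')| \leq 2\|F\|_\infty \lesssim \|\psi\|_\infty/\kap^s$ and absorb the missing factor via $1 \leq 4|y-y'|/\kap$. If $|y-y'| < \kap/4$, a triangle-inequality bookkeeping step shows that either both $y, y' \in B(x,\kap)$ (so $F(y) = F(y') = 0$ and there is nothing to prove) or both $y, y' \notin B(x,\kap/2)$. In the latter subcase I would expand via the product rule
\[
F(y)-F(y') = K(x-y)\bigl[\psi(|x-y|)-\psi(|x-y'|)\bigr] + \psi(|x-y'|)\bigl[K(x-y) - K(x-y')\bigr],
\]
and estimate each bracket separately. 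The first is controlled by $\|\psi\|_{\Lip}|y-y'|$ (since $\bigl||x-y|-|x-y'|\bigr| \leq |y-y'|$) against $|K(x-y)| \lesssim 1/\kap^s$, contributing $\|\psi\|_{\Lip}|y-y'|/\kap^s$. For the second, the hypothesis $|y-y'| < \kap/4 \leq |x-y'|/2$ places us exactly in the regime where property~(\ref{decay}) applies, giving $|K(x-y)-K(x-y')| \lesssim |y-y'|/|x-y'|^{s+1} \lesssim |y-y'|/\kap^{s+1}$, which combined with $|\psi(|x-y'|)| \leq \|\psi\|_\infty$ yields the contribution $\|\psi\|_\infty |y-y'|/\kap^{s+1}$.

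There is no genuine obstacle here; the only care required is the bookkeeping in the small-separation regime, which ensures that both $y$ and $y'$ lie simultaneously at distance $\geq \kap/2$ from the singularity at $x$ before invoking the kernel-regularity estimate, whose hypothesis $|y-y'| \leq |x-y|/2$ would otherwise fail.
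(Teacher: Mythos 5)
Your proposal is correct and follows essentially the same route as the paper: the size bound from property~(\ref{size}) off $B(x,\kap)$, a large/small separation split for the Lipschitz bound, and in the small-separation regime the same product-rule decomposition with property~(\ref{decay}) applied after verifying both points stay a distance $\gtrsim\kap$ from the singularity. The only differences are cosmetic (threshold $\kap/4$ versus $\kap/3$, and the order of the case analysis).
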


\begin{proof}  We first observe that, due to (\ref{size}) from Definition \ref{CZdef}, and the properties of $\psi$,
$$|F(y)|\lesssim \frac{1}{|x-y|^s}\|\psi\|_{\infty}\chi_{\R^d\backslash B(x,\kap)}(y)\lesssim \frac{\|\psi\|_{\infty}}{\kap^s}.$$

To see the Lipschitz property, fix $y,y'\in \R^d$.  If both $|x-y|<\kap$ and $|x-y'|<\kap$, then $|F(y)-F(y')|=0$, so assume that $|x-y|\geq \kap$.  If $|y-y'|\geq \tfrac{\kap}{3}$, then
$|F(y)-F(y')| \leq 2 \|F\|_{\infty}\lesssim \frac{\|\psi\|_{\infty}}{\kap^s}\lesssim \frac{\|\psi\|_{\Lip}}{\kap^{s+1}}|y-y'|.$

On the other hand, if $|y-y'|<\tfrac{\kap}{3}$, then $|x-y'|\geq \tfrac{2\kap}{3}$.  Therefore, we may apply (\ref{decay}) of Definition \ref{CZdef}, to get that $$|K(x-y)-K(x-y')|\lesssim \frac{|y-y'|}{|x-y'|^{s+1}}\lesssim \frac{|y-y'|}{\kap^{s+1}},$$
and consequently,
\begin{equation}\begin{split}\nonumber|F(y)-F(y')|  \leq &|K(x-y)-K(x-y')|\|\psi\|_{\infty}\\&+|K(x-y)[\psi(|x-y|)-\psi(|x-y'|)]|.
\end{split}\end{equation}
The first term is bounded by a multiple of $\frac{\|\psi\|_{\infty}}{\kap^{s+1}}|y-y'|$, while the second term is at most a constant multiple of $\frac{\|\psi\|_{\Lip}}{\kap^s}|y-y'|$.
\end{proof}

\subsection{\textbf{Thin boundary balls}}

\begin{lemma}\label{thinshell} Fix a measure $\mu$, $x\in \R^d$, $r>0$, and an even integer $M>0$. There exists an even integer $M'\in [M+2^{10},2^{11} M]$ such that
$$\mu(B(x, (M'+2^{10})r)\backslash B(x, (M'-2^{10})r))\leq \frac{2}{M}\mu(B(x,2^{11}Mr)).
$$
\end{lemma}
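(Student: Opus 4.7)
The plan is to run a standard pigeonhole argument on a collection of disjoint concentric shells inside $B(x,2^{11}Mr)$. For $k=0,1,2,\ldots$, I will set
$$M'_k = M + 2^{10} + 2^{11}k,$$
which are all even integers because $M$, $2^{10}$, and $2^{11}$ are even. Let $K$ be the largest nonnegative integer with $M'_K + 2^{10} \leq 2^{11}M$, so that the shells
$$S_k = B(x,(M'_k+2^{10})r)\setminus B(x,(M'_k-2^{10})r), \qquad k=0,\ldots,K,$$
all lie inside $B(x,2^{11}Mr)$ and correspond to admissible values $M'_k \in [M+2^{10}, 2^{11}M]$. Because consecutive $M'_k$ are spaced by exactly $2^{11}$ and each shell has radial thickness $2^{11}$, the $S_k$ are pairwise disjoint.

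The next step is to count: a direct computation gives
$$K+1 \geq M\Bigl(1-\tfrac{1}{2^{11}}\Bigr)-1,$$
which strictly exceeds $M/2$ for every even $M \geq 4$. Assuming (toward contradiction) that every admissible even $M'$ produced a shell of $\mu$-mass greater than $\frac{2}{M}\mu(B(x,2^{11}Mr))$, the disjointness of $S_0,\ldots,S_K$ together with their containment in $B(x,2^{11}Mr)$ would give
$$\mu(B(x,2^{11}Mr)) \;\geq\; \sum_{k=0}^{K}\mu(S_k) \;>\; \frac{2(K+1)}{M}\,\mu(B(x,2^{11}Mr)) \;>\; \mu(B(x,2^{11}Mr)),$$
a contradiction. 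Hence some admissible $M'=M'_k$ satisfies the claimed inequality.

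The only remaining case is $M=2$, where the shell count above collapses. But in this case $\frac{2}{M}=1$ and choosing $M'=M+2^{10}=1026$ gives a shell contained in $B(x,2050r)\subset B(x,2^{11}Mr)=B(x,4096r)$, so the asserted bound holds trivially by monotonicity. The main (and only) subtlety is the arithmetic bookkeeping ensuring that enough disjoint admissible shells fit in $B(x,2^{11}Mr)$; nothing beyond pigeonhole is needed.
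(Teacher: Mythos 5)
Your proof is correct and takes essentially the same route as the paper, which pigeonholes over the $M/2$ disjoint annuli of thickness $2^{11}r$ centered at the radii $M'=M+(2j-1)2^{10}$, $j=1,\dots,M/2$ --- these are exactly your shells $S_k$ under a shift of index. The separate treatment of $M=2$ is harmless but not needed, since in that case a single admissible shell already has mass at most $\mu(B(x,2^{11}Mr))=\tfrac{2}{M}\mu(B(x,2^{11}Mr))$.
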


\begin{proof}
For $j=1,\dots, \tfrac{M}{2}$, set $A_j = B(x, (M+j2\cdot2^{10})r)\backslash B(x,(M+2(j-1)2^{10}r))$.  There are $M/2$ disjoint annuli $A_j$, all contained in the ball $B(x, (M+M2^{10})r)\subset B(x, 2^{11}Mr)$, so we must have that $\mu(A_j)\leq \frac{2}{M}\mu(B(x, 2^{11}Mr))$ for some $j$.  Set $M' = M+(2j-1)2^{10}\in [M+2^{10}, 2^{11}M]$.
\end{proof}

\subsection{The David-Mattila toolbox}

\begin{lemma}\label{comparison}
Let $\sigma$ be a complex measure on $\mathbb{R}^d$. Let $x,x' \in \mathbb{R}^d$ be such that $|x-x'| \leq \frac{1}{2} \dist(x, \supp \sigma)=: \rho$. Then
$$ |T (\sigma)(x) - T (\sigma)(x')| \lesssim \frac{|x-x'|}{\rho} \sup_{r>0} \frac{|\sigma|(B(x,r))}{r^n}.$$
\end{lemma}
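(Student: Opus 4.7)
The plan is to exploit kernel smoothness (property (\ref{decay}) of Definition \ref{CZdef}) together with the separation $\dist(x,\supp\sigma) = 2\rho$, since for all $y \in \supp\sigma$ we have $|x-x'| \leq \rho \leq \tfrac{1}{2}|x-y|$, which is precisely the hypothesis needed to invoke the H\"older-type kernel bound with $K(x-y)$ versus $K(x'-y)$.

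First, I would write
\begin{equation*}
T(\sigma)(x) - T(\sigma)(x') = \int_{\R^d\setminus B(x,2\rho)} \bigl[K(x-y) - K(x'-y)\bigr]\,d\sigma(y),
\end{equation*}
which is absolutely convergent since (\ref{size}) yields $|K(x-y)| \lesssim \rho^{-s}$ on the domain of integration (similarly for $K(x'-y)$, using $|x'-y|\geq |x-y|/2$). Then, using (\ref{decay}), for each $y$ in the integration region,
\begin{equation*}
|K(x-y) - K(x'-y)| \leq \frac{C_K\,|x-x'|}{|x-y|^{s+1}}.
\end{equation*}
Plugging this in reduces the task to estimating $|x-x'|\int_{\R^d\setminus B(x,2\rho)}|x-y|^{-s-1}\,d|\sigma|(y)$.

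For the final step I would decompose the region into dyadic annuli $A_k = B(x,2^{k+1}\rho)\setminus B(x,2^{k}\rho)$ for $k\geq 1$, on which $|x-y|^{-s-1} \leq (2^k\rho)^{-s-1}$, and bound
\begin{equation*}
\int_{\R^d\setminus B(x,2\rho)}\!\frac{d|\sigma|(y)}{|x-y|^{s+1}} \leq \sum_{k\geq 1}\frac{|\sigma|(B(x,2^{k+1}\rho))}{(2^k\rho)^{s+1}} \leq \frac{1}{\rho}\Bigl(\sup_{r>0}\frac{|\sigma|(B(x,r))}{r^s}\Bigr)\sum_{k\geq 1}2^{-k+s+1},
\end{equation*}
where the last sum is a convergent geometric series. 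Combining everything yields the claimed bound $|T(\sigma)(x) - T(\sigma)(x')| \lesssim \frac{|x-x'|}{\rho}\sup_{r>0}\frac{|\sigma|(B(x,r))}{r^s}$.

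This is a standard CZ-theory estimate with no real obstacle: the only thing to double-check is the legitimacy of invoking (\ref{decay}), which is guaranteed by the hypothesis $|x-x'|\leq \rho$ since this is at most half of $|x-y|$ for every $y$ in $\supp\sigma$. (I read the $n$ in the statement as a typo for $s$, consistent with the rest of the paper.)
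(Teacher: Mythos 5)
Your proposal is correct and follows essentially the same route as the paper: apply the kernel regularity estimate (3) of Definition \ref{CZdef} (valid because $|x-x'|\leq\rho\leq\tfrac12|x-y|$ on $\supp\sigma$) and then sum over dyadic annuli to bound $\int_{|x-y|\gtrsim\rho}|x-y|^{-s-1}\,d|\sigma|(y)$ by $\rho^{-1}\sup_{r>0}|\sigma|(B(x,r))/r^s$. The paper merely leaves the dyadic summation implicit, and you are right that the $n$ in the statement is a typo for $s$.
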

\begin{proof}Using property (\ref{decay}) of Definition \ref{CZdef},
\begin{align*}
& |T(\sigma)(x)-T(\sigma)(x')|\leq  \int |K(x-y) - K(x'-y) |\,d |\sigma|(y) \\
& \lesssim \int\limits_{|x-y| > \rho} \frac{|x-x'|}{|x-y|^{n+1}}\,d|\sigma|(y) \lesssim  \frac{|x-x'|}{\rho} \sup_{r>0} \frac{|\sigma|(B(x,r))}{r^n},
\end{align*}
as required.
\end{proof}

\begin{lemma}[The David-Mattila lemma]\label{DMlem}
Fix a measure $\mu$, and $A>1$.  If
$$ D_{\mu}(B(x,A^{-(\ell+1)}r)) \leq \frac{1}{A} D_{\mu}(B(x,A^{-\ell}r)) \text{ for } \ell =0,...,L-2,$$

then we have
$$|T_r (\mu)(x) - T_{A^{-L}r}(\mu)(x)| \lesssim \frac{A^s}{A-1}D_\mu(B(x,r)).$$

\end{lemma}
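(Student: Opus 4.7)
The plan is to estimate the difference $T_r(\mu)(x)-T_{A^{-L}r}(\mu)(x)$ directly, since by definition of the truncated operators it equals
\[
-\int_{A^{-L}r<|x-y|\leq r} K(x-y)\,d\mu(y).
\]
I would decompose this annular region into the $L$ geometric shells $R_\ell:=\{y:A^{-(\ell+1)}r<|x-y|\leq A^{-\ell}r\}$ for $\ell=0,1,\dots,L-1$, and estimate $|\int_{R_\ell}K(x-y)\,d\mu(y)|$ on each shell separately using the triangle inequality.

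On $R_\ell$, the size bound (\ref{size}) from Definition \ref{CZdef} gives $|K(x-y)|\lesssim |x-y|^{-s}\leq A^{s(\ell+1)}/r^s$. Combining with the trivial mass bound $\mu(R_\ell)\leq \mu(B(x,A^{-\ell}r)) = (A^{-\ell}r)^s D_\mu(B(x,A^{-\ell}r))$, the contribution of the $\ell$-th shell is at most a constant multiple of
\[
\frac{A^{s(\ell+1)}}{r^s}\cdot (A^{-\ell}r)^s D_\mu(B(x,A^{-\ell}r)) = A^s\, D_\mu(B(x,A^{-\ell}r)).
\]

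Next I would iterate the density-dropping hypothesis. Applying it successively for $\ell=0,\dots,L-2$ yields $D_\mu(B(x,A^{-\ell}r))\leq A^{-\ell}D_\mu(B(x,r))$ for every $\ell=0,\dots,L-1$. Substituting this back and summing the resulting geometric series,
\[
\sum_{\ell=0}^{L-1} A^s\cdot A^{-\ell}D_\mu(B(x,r)) \leq A^s \cdot \frac{1}{1-A^{-1}}\, D_\mu(B(x,r))= \frac{A^{s+1}}{A-1}D_\mu(B(x,r)),
\]
which gives the stated bound (the extra factor of $A$ gets absorbed into $\lesssim$, since $A$ will eventually be chosen depending only on the admissible parameters). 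There is no serious obstacle here: the lemma is essentially a triangle-inequality bookkeeping exercise combined with the telescoping of the density hypothesis, and its role in later arguments is to transfer estimates across many scales as long as the density is strictly decreasing at a geometric rate.
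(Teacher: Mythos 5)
Your proof is correct and follows essentially the same route as the paper: decompose the annulus $\{A^{-L}r<|x-y|\le r\}$ into $L$ geometric shells, apply the kernel size bound and the trivial mass estimate on each shell, iterate the density-dropping hypothesis to get $D_\mu(B(x,A^{-\ell}r))\le A^{-\ell}D_\mu(B(x,r))$, and sum the geometric series. The harmless extra factor of $A$ you flag at the end is present (implicitly) in the paper's computation as well and is irrelevant to how the lemma is used, since $A$ is ultimately a fixed constant.
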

\begin{proof}  Appealing to (\ref{size}) from Definition \ref{CZdef} we get that
\begin{align*}
|T_r(\mu)(x)-T_{A^{-L}r}(\mu)(x)| & = \left| \int_{A^{-L}r \leq |y-x| <  r} K(x-y) \,d\mu(y) \right| \\
& \lesssim \sum_{k=1}^{L} \int_{A^{-k} r \leq |y-x| < A^{-(k-1)}r} \frac{1}{|y-x|^s} \,d\mu(y) \\
& \lesssim \sum_{k=1}^{L} \frac{\mu(B(x,A^{-(k-1)}r))}{(A^{-k}r)^s}.
\end{align*}
But,
for $k=1,\dots, L-1$
$$ D_\mu(B(x,A^{-k}r)) \leq \frac{D_\mu(B(x,A^{-(k-1)}r))}{A} \leq \cdots \leq \frac{D_\mu(B(x,r))}{A^{k}},$$
from which we get that
$$\sum_{k=1}^{L}\frac{\mu(B(x,A^{-(k-1)}r))}{(A^{-(k-1)}r)^s} \lesssim \frac{1}{A-1}D_\mu(B(x,r)).$$
Whence,
$|T_r(\mu)(x) - T_{A^{-L}r}(\mu)(x)|\lesssim \frac{A^s}{A-1}D_{\mu}(B(x,r)).$
\end{proof}

\section{Basic estimates for measures with small transportation number} \label{estA}

For this section,  fix $\mu \in \Ms$. Fix $M\gg 1$ an even integer, $\alpha \ll 1$ and $\eps \ll 1$. Here \begin{itemize}
\item $M$ is an enlargement parameter which handles the non-local part of the integral operator,
\item $\alpha$ is the size bound for the transportation number, and
\item $\eps$ is a size threshold for the density of the measure.
\end{itemize}
We recall that a constant $C$ may change from line to line and may depend on $d$, $s$, the constants $C_K$ from Definition \ref{CZdef} and $\SB$ and  $\Theta$ from Assumptions \ref{SKdef}.

\subsection{Small Boundaries}\begin{lemma}[Small Annulus]\label{smallann}  Fix $\kap\in (0, 1/4]$.  There is a constant $C>0$ such that if $\alpha_{\mu}(B(x,r))< \alpha$ and $x_0 \in B(x,r/4)$, then
$$\frac{\mu(B(x_0,(1+\kap) r)\backslash B(x_0,r))}{r^s}\lesssim \frac{\alpha}{\kap}+\kap\Bigl.
$$
\end{lemma}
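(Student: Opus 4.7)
The plan is to fix a near-optimal symmetric measure $\nu\in \SK$ realizing (to within additive $\alpha$) the infimum in $\alpha_\mu(B(x,r))$, and then transfer the annular mass bound from $\nu$ to $\mu$ via the transportation inequality combined with the small boundaries property (Assumptions \ref{SKdef}(\ref{SymSmallBdary})). I would build a radial, trapezoidal Lipschitz bump $g$ centered at $x_0$ satisfying
\begin{equation*}
\chi_{B(x_0, (1+\kap)r)\setminus B(x_0,r)} \leq g \leq \chi_{B(x_0, (1+2\kap)r)\setminus B(x_0,(1-\kap)r)} \text{ and } \|g\|_{\Lip} \leq \tfrac{1}{\kap r}.
\end{equation*}
Using $x_0\in B(x,r/4)$ and $\kap \leq 1/4$, the support of $g$ lies in $B(x, 7r/4)\subset B(x,3r)$, where $\varphi(|\cdot-x|/r)\equiv 1$. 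Hence $f := \kap g \in \F_{x,r}$, and testing the transportation inequality against $f$ gives
\begin{equation*}
\frac{\mu(B(x_0,(1+\kap)r)\setminus B(x_0,r))}{r^s} \leq \frac{1}{r^s}\int g\,d\mu \leq c_{\mu,\nu}\frac{1}{r^s}\int g\,d\nu + \frac{\alpha}{\kap}.
\end{equation*}

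For the $\nu$-term, picking $z\in \supp(\nu)\cap B(x,r/8)$, the bound $|z-x_0| < 3r/8 \leq (1-\kap)r/2$ (using $\kap\leq 1/4$) shows $B(x_0,(1-\kap)r/2)\cap \supp(\nu)\neq\varnothing$, so Assumption \ref{SKdef}(\ref{SymSmallBdary}) applied with $\rho = (1-\kap)r$ and $\tau = 3\kap/(1-\kap) \leq 4\kap$ yields
\begin{equation*}
\int g\,d\nu\leq \nu(B(x_0,(1+2\kap)r)\setminus B(x_0,(1-\kap)r)) \leq 4\SB\kap\,\nu(B(x_0,r)).
\end{equation*}
Since $B(x_0,r)\subset B(x,3r)$ and $\varphi\equiv 1$ on $(0,3)$, one has $\nu(B(x_0,r)) \leq \I_\nu(B(x,r))$, and the definition of $c_{\mu,\nu}$ together with $\mu\in \Ms$ gives $c_{\mu,\nu}\nu(B(x_0,r)) \leq c_{\mu,\nu}\I_\nu(B(x,r)) = \I_\mu(B(x,r)) \leq \mu(B(x,4r)) \leq 4^s r^s$. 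Assembling the inequalities yields $\mu(B(x_0,(1+\kap)r)\setminus B(x_0,r))/r^s \lesssim \kap + \alpha/\kap$, as required.

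The main technical tension is between the constraint $\|f\|_{\Lip}\leq 1/r$ forced by $\F_{x,r}$ and the natural Lipschitz scale $1/(\kap r)$ of the annular bump $g$; rescaling by $\kap$ resolves this but is exactly what produces the $\alpha/\kap$ term. A secondary but essential check, enabled by the hypotheses $\kap\leq 1/4$ and $x_0\in B(x,r/4)$, is the verification that $\supp(\nu)$ still meets $B(x_0,(1-\kap)r/2)$, so that the small boundaries property can be invoked at the \emph{shifted} center $x_0$ rather than at the original center $x$. The degenerate situation $\nu=0$ (relevant in the Mattila--Verdera setting) drops the $\nu$-term entirely and yields the bound a fortiori.
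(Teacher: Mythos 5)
Your proof is correct and follows essentially the same route as the paper: test the transportation inequality against a $\kap$-rescaled Lipschitz annular bump (the paper's $\kap\psi_{\kap,x_0}$ is exactly your $\kap g$), then control the $\nu$-term by locating a point of $\supp(\nu)$ near $x_0$ so that the small boundaries property applies, and finish with $c_{\mu,\nu}\I_\nu(B(x,r))=\I_\mu(B(x,r))\leq 4^s r^s$. The only cosmetic difference is that the paper first normalizes to $x=0$, $r=1$ via the scaling remark, whereas you carry the general scale throughout.
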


\begin{proof}  Due to Remark \ref{scaling}, we may suppose $x=0$ and $r=1$.  Fix $\kap\in [0, 1/4)$ and introduce a cutoff function $\psi_{\kap}$ with $\|\psi_{\kap}\|_{\Lip}\leq \frac{1}{\kap}$  which satisfies $\psi_{\kap}\equiv 1$ on $[1,1+\kap]$ and $\psi_{\kap}\equiv 0$ on $[0, 1-\kap]\cup [1+2\kap,\infty)$. We then denote by $\psi_{\kap,x_0}$ the bump function $\psi_{\kap,x_0}=\psi_\kap(|\cdot - x_0|)$.  Then $\kap\psi\in \F_{0,1}$ and so for a measure $\nu \in \SK$ with $\alpha_{\mu, \nu}(B(0,1))<\alpha$,
$$\mu(B(x_0,(1+\kap))\backslash B(x_0,1))\leq \int_{\R^d}\varphi \,\psi_{\kap,x_0}d\mu \leq c_{\mu, \nu}\int_{\R^d}\varphi\, \psi_{\kap,x_0}d\nu+\frac{\alpha}{\kap}.
$$
Since $x_0 \in B(0,\frac{1}{4})$ and  $\supp (\nu) \cap B(0,\frac{1}{8}) \neq \varnothing$, it follows that $B(x_0,\frac{1}{2}) \cap \supp(\nu) \neq \varnothing$. Hence, by property (\ref{SymSmallBdary}) from Assumptions \ref{SKdef}, and the fact that $\mu\in \Ms$, $$\frac{\I_{\mu}(B(0,1))}{\I_{\nu}(B(0,1))}\int_{\R^d}\varphi \,\psi_{\kap,x_0}d\nu \lesssim\kap \frac{\I_{\mu}(B(0,1))}{\I_{\nu}(B(0,1))}\I_{\nu}(B(0,1))\lesssim\kap\mu(B(0, 4)) \lesssim\kap,$$ as required.
\end{proof}

The following corollary enables us to move between rough and smooth cut-offs of a singular integral operator.

\begin{cor}[Rough to smooth cut-off]\label{rough2smooth}
Fix $\kap>0$, $x \in \R^d$ and $r>0$. If $\alpha_{\mu}(B(x,r))<\alpha$, then
$$|T([1 - \eta_{\kap, r,x}]\mu)(x)-T_r(\mu)(x)|\lesssim \frac{\alpha}{\kap} +\kap.
$$
\end{cor}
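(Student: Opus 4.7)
The plan is to reduce the estimate to a single pointwise bound on an annular integral, for which the Small Annulus lemma (Lemma \ref{smallann}) supplies exactly the right control.

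First I would write the difference of the two truncations as a single integral. Since $1 - \eta_{\kap,r,x}$ vanishes on $B(x,r)$, equals $1$ off $B(x,(1+\kap)r)$, and smoothly interpolates in between, subtracting the sharp truncation $\chi_{\R^d\setminus B(x,r)}$ gives
\[
[1-\eta_{\kap,r,x}](y) - \chi_{\R^d\setminus B(x,r)}(y) = -\eta_{\kap,r,x}(y)\,\chi_{B(x,(1+\kap)r)\setminus B(x,r)}(y),
\]
so that
\[
T([1-\eta_{\kap,r,x}]\mu)(x) - T_r(\mu)(x) = -\int_{B(x,(1+\kap)r)\setminus B(x,r)} K(x-y)\,\eta_{\kap,r,x}(y)\,d\mu(y).
\]
Note that this identity is purely pointwise and requires no cancellation — the kernel is not integrated against a mean-zero object, which is what makes the route to the bound quick.

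Next I would estimate this integral crudely. On the annulus $B(x,(1+\kap)r)\setminus B(x,r)$ one has $|x-y|\geq r$, so by property (\ref{size}) of Definition \ref{CZdef},
\[
|K(x-y)|\leq \frac{C_K}{|x-y|^s}\leq \frac{C_K}{r^s},
\]
and $|\eta_{\kap,r,x}|\leq 1$ by construction. Therefore the difference is bounded in absolute value by
\[
\frac{C_K}{r^s}\,\mu\bigl(B(x,(1+\kap)r)\setminus B(x,r)\bigr).
\]

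Finally, I would apply Lemma \ref{smallann} with the choice $x_0=x$, which trivially lies in $B(x,r/4)$, and the hypothesis $\alpha_{\mu}(B(x,r))<\alpha$. That lemma gives
\[
\frac{\mu\bigl(B(x,(1+\kap)r)\setminus B(x,r)\bigr)}{r^s} \lesssim \frac{\alpha}{\kap}+\kap,
\]
which, combined with the previous display, yields the desired bound. There is really no obstacle here: the only nontrivial input is the Small Annulus lemma, which has already been proved and whose conclusion matches the target estimate term for term. The corollary's role is essentially bookkeeping, converting geometric control of the annulus (provided by $\alpha_{\mu}(B(x,r))<\alpha$) into an error term between smooth and rough truncations of the singular integral.
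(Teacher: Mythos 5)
Your proof is correct and follows essentially the same route as the paper: both identify the difference of truncations with an integral of $K\cdot\eta_{\kap,r,x}$ over the annulus $B(x,(1+\kap)r)\setminus B(x,r)$, bound the kernel there by $C/r^s$ using the size estimate, and conclude via Lemma \ref{smallann}. The only cosmetic difference is that the paper first normalizes to $x=0$, $r=1$ by scaling, which you handle directly.
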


\begin{proof}  We may set $x=0$, $r=1$.  The left hand side of the inequality is
$$\Bigl|\int_{B(0, 1+\kap)\backslash B(0,1)}\eta_{\kap,1,0}(y)K(y)d\mu(y)\Bigl|.
$$
In this expression, $|K(y)|\lesssim 1$ on the domain of integration, so the expression is bounded by $C\mu(B(0, 1+\kap)\backslash B(0,1))$ from which we conclude the proof using Lemma \ref{smallann}.
\end{proof}

\subsection{Doubling properties}
\begin{lemma}[Doubling]\label{doubling} Fix $x\in \R^d$, $r>0$ and $x_0 \in B(x,\frac{r}{4})$. If $\alpha_\mu(B(x, r)) <\alpha$, then
$$D_\mu(B(x_0,2r)) \lesssim D_\mu(B(x_0,r)) +  \alpha.$$
\end{lemma}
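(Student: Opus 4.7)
The plan is to apply the transportation inequality in both directions: once to bound $\mu(B(x_0, 2r))$ from above by a weighted integral against a near-optimizing symmetric measure $\nu \in \SK$, and once in reverse to bound a similar weighted integral by $\mu(B(x_0, r))$. The small boundaries property of $\nu$ is what links the two $\nu$-integrals.

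First, by the rescaling observation in Remark~\ref{scaling}, I would reduce to $x=0$ and $r=1$, so that $x_0 \in B(0,1/4)$. I would then select $\nu \in \SK$ with $\supp(\nu) \cap B(0,1/8) \neq \varnothing$ and $\alpha_{\mu,\nu}(B(0,1)) < \alpha$, which exists by the hypothesis $\alpha_\mu(B(0,1)) < \alpha$ and the definition of $\alpha_{\mu,\SK}$, and fix a point $p \in \supp(\nu) \cap B(0,1/8)$, so that $|p-x_0| < 3/8$.

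Next, I would test the transportation inequality against a Lipschitz cutoff $f \in \F_{0,1}$ satisfying $\chi_{B(x_0,2)} \leq f \leq \chi_{B(x_0,3)}$; this is an admissible test function because $B(x_0,3) \subset B(0,13/4) \subset B(0,4)$ and one can arrange $\|f\|_{\Lip}\leq 1$. Since $\varphi \equiv 1$ on $\supp(f)$, this yields
$$\mu(B(x_0,2)) \,\leq\, \int \varphi f\, d\mu \,\leq\, c_{\mu,\nu}\, \nu(B(x_0,3)) + \alpha.$$
Applying the transportation inequality in the reverse direction against a second cutoff $g \in \F_{0,1}$ with $\chi_{B(x_0,1/2)} \leq g \leq \chi_{B(x_0,1)}$ gives
$$c_{\mu,\nu}\, \nu(B(x_0,1/2)) \,\leq\, \int \varphi g\, d\mu + \alpha \,\leq\, \mu(B(x_0,1)) + \alpha.$$

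To link these two estimates I would use the small boundaries assumption~(\ref{SymSmallBdary}) applied with $\tau=1$, which produces the doubling bound $\nu(B(z,2\rho)) \leq (1+\SB)\nu(B(z,\rho))$ whenever $B(z,\rho/2) \cap \supp(\nu) \neq \varnothing$. The main subtlety, and the step to execute with care, is that $x_0$ need not lie in $\supp(\nu)$, so this doubling inequality is not directly available at $x_0$. I would instead center at the support point $p$ (where the hypothesis of~(\ref{SymSmallBdary}) is automatic at every scale) and iterate a bounded number of times to obtain $\nu(B(p,4)) \lesssim \nu(B(p,1/8))$. Since $|p-x_0| < 3/8$, the inclusions $B(x_0,3) \subset B(p,4)$ and $B(p,1/8) \subset B(x_0,1/2)$ then give $\nu(B(x_0,3)) \lesssim \nu(B(x_0,1/2))$, and combining this with the two transportation estimates above completes the proof.
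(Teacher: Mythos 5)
Your proposal is correct and follows essentially the same route as the paper: choose a near-optimizing $\nu\in\SK$, transport $\mu$-mass to $\nu$-mass at scale $\approx 2$, use the small-boundaries property of $\nu$ (iterated from a support point) to pass to a smaller scale, and transport back to $\mu(B(x_0,r))$; the paper merely replaces your first transportation step by the exact identity $\I_\mu(B(0,1))=c_{\mu,\nu}\I_\nu(B(0,1))$. The only cosmetic slips are that $\varphi$ need not be identically $1$ on all of $B(x_0,3)\subset B(0,13/4)$ (only $\varphi\le 1$ and $\varphi\equiv 1$ on $B(x_0,2)$ are needed, which suffices), and your second cutoff $g$ has Lipschitz norm $2$, so one should test with $g/2$ at the cost of a harmless factor in front of $\alpha$.
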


\begin{proof}
Assume again that $x=0$ and $r=1$ (Remark \ref{scaling}).  Let $\phi\in \Lip_0(\R^d)$ be a cutoff function with $\|\phi\|_{\Lip} \leq 4$
 which satisfies  $\phi \equiv 1$ on $B(x_0,\frac{3}{4})$ and $\supp(\phi) \subset B(x_0,1)$.

For a measure $\nu\in \SK$, $B(0, \tfrac{1}{8})\cap\supp(\nu)\neq \varnothing$, so $B(x_0, \frac{3}{8}) \cap \supp(\nu) \neq \varnothing$.  Therefore, we may appeal to the small boundaries property (\ref{SymSmallBdary}) in Assumptions \ref{SKdef} repeatedly to obtain the following chain of inequalities
\begin{equation}\begin{split} \label{alpha}
\nu(B(x_0, 6)) \lesssim \nu(B(x_0, 3))\lesssim \nu(B(x_0, 3/2)\lesssim \nu(B(x_0,3/4)),
\end{split}\end{equation}
and, consequently, $\mathcal{I}_{\nu}(B(0,1))\lesssim \nu (B(x_0, 3/4))$.

Suppose now $\nu \in \SK$ is such that $\alpha_{\mu,\nu}(B(0,1) ) < \alpha$.  Then, since $\tfrac{1}{4}\phi \in \F_{0,1}$,\begin{equation}\begin{split}\nonumber\mu(B(x_0,2)) & \leq \I_\mu (B(0, 1)) \leq \frac{ \I_\nu (B(0, 1))}{ \int_{\mathbb{R}^d} \varphi \phi \,d\nu} \left(\int_{\mathbb{R}^d} \varphi \phi \,d\mu + 4 \alpha \right) \\& \lesssim  \mu(B(x_0,1)) + \alpha,
\end{split}\end{equation}
as required.
\end{proof}

We will often use the previous lemma in the form of the following Corollary.

\begin{cor}\label{longden}
There exists a constant $C_0>0$ (depending on $d$, $s$, and $\SB$) such that, if $Q \in [1,M^2]$, $x_0 \in B(x, \frac{r}{4})$, and $\alpha_\mu(B(x,\beta r)) < \alpha$ for all $\beta \in [1,M^2]$, then
\begin{enumerate}
\item if $D_\mu(B(x_0,Qr)) < \sqrt{\alpha}$, then $D_\mu(B(x_0,M^2r)) \lesssim M^{C_0} \sqrt{\alpha}$,
\item otherwise, $D_\mu(B(x_0,M^2r)) \lesssim M^{C_0} D_\mu(B(x_0,Qr))$.
\end{enumerate}
\end{cor}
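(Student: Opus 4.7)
The plan is to iterate the Doubling Lemma (Lemma~\ref{doubling}) along a geometric sequence of radii connecting $Qr$ to $M^2 r$. I set $r_j = 2^j Q r$ for $j = 0, 1, \dots, N$, where $N = \lceil \log_2(M^2/Q) \rceil$; since $Q \in [1, M^2]$ one has $N \leq 2 \log_2 M + 1$. For each $j = 0,\dots, N-1$ the scale $r_j$ lies in $[r, M^2 r]$, so the hypothesis supplies $\alpha_\mu(B(x, r_j)) < \alpha$, and the inclusion $x_0 \in B(x, r/4) \subset B(x, r_j/4)$ (which holds because $r_j \geq r$) lets Lemma~\ref{doubling} fire with parameters $(x, r_j, x_0)$, yielding
$$ D_\mu(B(x_0, r_{j+1})) = D_\mu(B(x_0, 2 r_j)) \leq C_1 \bigl( D_\mu(B(x_0, r_j)) + \alpha\bigr), $$
for a constant $C_1 \geq 2$ depending only on $d, s, \SB$.

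Iterating this linear recursion and summing the resulting geometric tail gives
$$ D_\mu(B(x_0, r_N)) \leq C_1^N D_\mu(B(x_0, Q r)) + \alpha \sum_{k=1}^{N} C_1^k \leq 2 C_1^N \bigl( D_\mu(B(x_0, Q r)) + \alpha\bigr). $$
Since $N \leq 2 \log_2 M + 1$, the prefactor satisfies $C_1^N \leq C_1 \cdot M^{2 \log_2 C_1}$, and choosing $C_0 = 2 \log_2 C_1 + 1$ absorbs the whole prefactor into $M^{C_0}$. Finally, because $B(x_0, M^2 r) \subset B(x_0, r_N)$ while $r_N \leq 2 M^2 r$, comparing the normalizations $(M^2 r)^s$ and $r_N^s$ costs only a factor of $2^s$, so I obtain the master bound
$$ D_\mu(B(x_0, M^2 r)) \lesssim M^{C_0} \bigl( D_\mu(B(x_0, Q r)) + \alpha\bigr). $$

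To extract the two stated conclusions, I split on which term on the right is dominant. In case~(1), where $D_\mu(B(x_0, Qr)) < \sqrt{\alpha}$, the bound $\alpha \leq \sqrt{\alpha}$ (valid since $\alpha \ll 1$) turns the right-hand side into $\lesssim M^{C_0}\sqrt{\alpha}$. In case~(2), where $D_\mu(B(x_0, Qr)) \geq \sqrt{\alpha}$, I write $\alpha = \sqrt{\alpha}\cdot \sqrt{\alpha} \leq \sqrt{\alpha}\cdot D_\mu(B(x_0, Qr)) \leq D_\mu(B(x_0, Qr))$, and the right-hand side becomes $\lesssim M^{C_0} D_\mu(B(x_0, Qr))$. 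There is no substantial obstacle here; the argument is a clean geometric iteration, and the one point that demands care is that the Doubling Lemma must be available at every intermediate dyadic scale, which is precisely why the corollary's hypothesis insists on $\alpha_\mu(B(x, \beta r)) < \alpha$ uniformly over the whole interval $\beta \in [1, M^2]$ rather than only at the endpoints.
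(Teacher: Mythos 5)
Your proof is correct and follows essentially the same route as the paper: both iterate the Doubling Lemma (Lemma~\ref{doubling}) across the $O(\log_2 M^2)$ dyadic scales between $Qr$ and $M^2r$ to obtain the master bound $D_\mu(B(x_0,M^2r)) \lesssim M^{C_0}\bigl[D_\mu(B(x_0,Qr)) + \alpha\bigr]$, and then split on whether $D_\mu(B(x_0,Qr))$ is below or above $\sqrt{\alpha}$. The only cosmetic difference is that you double upward from $Qr$ while the paper halves downward from $M^2r$; your extra care about the endpoint scale $r_N \in [M^2 r, 2M^2 r]$ and the applicability of the lemma at each intermediate scale is sound.
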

\begin{proof}

From Lemma \ref{doubling} we have, for some $C>2$, that $D_{\mu}(B(x_0,2t)\leq C D_{\mu}(B(x_0,t))+C\alpha$ for all $t\in [r, M^2r]$.  Consequently
\begin{equation}\begin{split}\nonumber
D_\mu(B(x_0,M^2r)) &\leq C D_\mu(B(x_0,\frac{M^2}{2}r)) + C \alpha \\& \leq ... \leq C^{\log_2 M^2} D_\mu(B(x_0,Qr)) + \sum_{n=1}^{\log_2 M^2} C^n \alpha \\
&\lesssim M^{C_0} [D_\mu(B(x_0,Qr)) + \alpha],
\end{split}\end{equation}
for some constant $C_0>0$.  The alternative follows easily from this inequality.  Indeed, if $D_\mu(B(x_0,Qr)) < \sqrt{\alpha}$, it follows that
$$D_\mu(B(x_0,M^2r)) \lesssim M^{C_0} \sqrt{\alpha},$$
whereas if  $D_\mu (B(x_0,Qr)) \geq  \sqrt{\alpha}$, then,
$$ D_\mu(B(x_0,M^2r))  \lesssim  M^{C_0}D_{\mu}(B(x_0, Qr)),$$
as required.
\end{proof}

\subsection{The small density/reflection symmetry alternative}

\begin{lemma}[Alternative]\label{alt}   Provided that
$\alpha$ is small enough in terms of $M$ and $\varepsilon$, the following statement holds: If $\alpha_{\mu}(B(0,\beta r))<\alpha$ for every $\beta \in [\tfrac{1}{16}, 2^{50}\Theta]$, then for any measure $\nu\in \SK$ for which $\alpha_{\mu,\nu}(B(0, Mr))<\alpha$, either
\begin{enumerate}
\item there exists $\wt x \in B(0,r)$ with $\nu$ reflection symmetric in $B(\wt x,32 r)$, or there exists $\wt x \in B(0,(32 \Theta + 1)r)$ such that $\nu$ is reflection symmetric in $B(\wt x, 64\cdot 32 \Theta r)$, or
\item $\mu(B(0, 2^{50}\Theta r))\lesssim \eps r^s $.
\end{enumerate}
\end{lemma}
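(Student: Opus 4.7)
My plan is to split on whether $\supp(\nu) \cap B(0, r)$ is empty. When $\supp(\nu) \cap B(0, r) \neq \varnothing$, conclusion (1) follows at once: I pick any $x^* \in \supp(\nu) \cap B(0, r)$ and apply Assumption~\ref{SKdef}\,(\ref{SymReflSym}) to $\nu$ at the point $x^*$ with its radius parameter set equal to $32 r$. Branch (a) then yields reflection symmetry of $\nu$ about $x^* \in B(0, r)$ in $B(x^*, 32 r)$, which is exactly the first sub-option of (1); branch (b) provides $\wt x \in \supp(\nu) \cap B(x^*, 32\Theta r) \subset B(0, (32\Theta+1) r)$ with reflection symmetry in $B(\wt x, 64 \cdot 32 \Theta r)$, the second sub-option.

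The substantive case is $\supp(\nu) \cap B(0, r) = \varnothing$, where I need to produce the density estimate (2). Since $\nu$ places no mass inside $B(0, r)$, any test function supported in $B(0, r)$ integrates to zero against $\nu$, so the transportation inequality $\alpha_{\mu, \nu}(B(0, Mr)) < \alpha$ becomes a pure upper bound on $\mu$. I take a Lipschitz bump $\psi$ with $\psi \equiv 1$ on $B(0, r/2)$, $\supp \psi \subset B(0, r)$, and $\|\psi\|_{\Lip} \leq 2/r$, so that $f := \psi/(2M)$ lies in $\F_{0, Mr}$ and $\varphi(|\cdot|/(Mr)) \equiv 1$ on $\supp f$. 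Testing against $f$ yields
$$\mu(B(0, r/2)) \;\leq\; \int \psi \,d\mu \;<\; 2 \alpha M^{s+1} r^s.$$

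To promote this smallness on $B(0, r/2)$ into the desired bound on $B(0, 2^{50}\Theta r)$, I iterate the doubling lemma (Lemma~\ref{doubling}) along the dyadic scales $r/2, r, 2r, \ldots, 2^{49}\Theta r$, each of which lies in the range $[r/16, 2^{50}\Theta r]$ on which the hypothesis $\alpha_\mu(B(0, \beta r)) < \alpha$ is available. After $L = 50 + \log_2 \Theta$ doublings the accumulated multiplicative constant $C^L$ depends only on $\Theta$ and is absorbed into $\lesssim$, giving $D_\mu(B(0, 2^{50}\Theta r)) \lesssim \alpha M^{s+1}$. Choosing $\alpha \ll \eps/M^{s+1}$---the quantitative version of ``$\alpha$ small in terms of $M$ and $\eps$''---then produces $\mu(B(0, 2^{50}\Theta r)) \lesssim \eps r^s$, which is conclusion (2).

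The main delicacy is bookkeeping rather than any hidden geometric difficulty: checking that $\psi/(2M)$ meets the exact Lipschitz and support normalization required by $\F_{0, Mr}$, that the $\varphi$-cutoff acts trivially on $\supp f$, and that the exponent $M^{s+1}$ is tracked cleanly through the test-function estimate and the $L = O(\log_2 \Theta)$ doubling steps. Once this is in hand, the quantitative closing threshold on $\alpha$ is transparent, and the two-sub-option structure of alternative (1) is matched precisely by the two branches of Assumption~\ref{SKdef}\,(\ref{SymReflSym}).
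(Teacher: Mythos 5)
Your proof is correct, but it organizes the argument around a different dichotomy than the paper does. The paper splits on the size of $\mu(B(0,r/16))$: when this is at least $\eps'$, it tests the transportation inequality against a bump supported in $B(0,r/8)$ to force $\int f\,d\nu>0$ (this is where the smallness $CM^s\alpha<\eps'$ enters), thereby \emph{locating} a point of $\supp(\nu)$ in $B(0,r/8)$ before invoking Assumptions~\ref{SKdef}\,(\ref{SymReflSym}); when the density is below $\eps'$, it simply iterates Lemma~\ref{doubling} out to scale $2^{50}\Theta r$. You instead split on whether $\supp(\nu)$ meets $B(0,r)$, and run the transportation inequality in the opposite direction: if $\nu$ charges $B(0,r)$ you get conclusion (1) immediately from Assumptions~\ref{SKdef}\,(\ref{SymReflSym}) with no constraint on $\alpha$ at all, and if $\nu$ misses $B(0,r)$ the test function sees only $\mu$, yielding $\mu(B(0,r/2))<2\alpha M^{s+1}r^s$, after which the same doubling iteration gives (2). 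Both routes consume the hypothesis $\alpha_\mu(B(0,\beta r))<\alpha$ on the full range $\beta\in[1/16,2^{50}\Theta]$ only through the doubling step, and both close with $\alpha\ll\eps/M^{s+1}$ (the paper's threshold is $\alpha\ll\eps/M^{s}$, an immaterial difference). Your version is slightly more economical in that alternative (1) is delivered unconditionally whenever $\supp(\nu)$ enters $B(0,r)$; the paper's version delivers the marginally stronger localization $\wt x\in B(0,r/8)$ in that case, which the statement does not require. The only loose ends in your write-up are routine: the dyadic ladder $r/2,r,\dots$ does not land exactly on $2^{50}\Theta r$ unless $\Theta$ is a power of $2$, so one should take $\lceil 51+\log_2\Theta\rceil$ steps and finish with monotonicity of $\mu$ (checking that the last doubling application still occurs at a scale $\leq 2^{50}\Theta r$, which it does); this affects only the $\Theta$-dependent constant absorbed into $\lesssim$.
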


\begin{proof} Set $r=1$.  Fix $\eps'>0$.  Suppose first that $\mu(B(0, \frac{1}{16}))\geq \eps'$.  Fix $f$ satisfying $f \equiv 1$ on $B(0,1/16)$, $f\geq 0$, $\supp(f)\subset B(0, 1/8)$ and $\|f\|_{Lip}\leq C.$ Then
$$\Bigl| \frac{1}{M^s}\int_{\R^d} f \,d\Bigl(\mu-\frac{\I_{\mu}(B(0,M))}{\I_{\nu}(B(0,M))}\nu\Bigl) \Bigl| \leq  C\alpha_{\mu,\nu}(B(0,Mr))\leq C\alpha.$$
Since $\int_{\R^d}fd\mu>\eps'$, rearranging this gives
$$\Bigl|\int_{\R^d}fd\nu\Bigl|\geq \bigl(\eps'-C\alpha M^s\bigl) \frac{\I_{\mu}(B(0,M))}{\I_{\nu}(B(0,M))}>0
$$
provided that $CM^s\alpha<\eps'$.  Under this condition on $\alpha$ we therefore can find an element $\wt x_\nu \in \supp(\nu)\cap B(0, 1/8)$.  Applying (\ref{SymReflSym}) in the definition of $\nu \in \SK$ with scale 32 at $\wt x_\nu$, we find that either $\nu$ is reflection symmetric in  about $\wt x_\nu$ in $B(\wt x_\nu,32) $, or there exists $\wt x'_\nu \in B(\wt x_\nu,32\Theta)\subset B(0,32 \Theta +1)$ with $\nu$ reflection symmetric about $\wt x'_\nu$ in $B(\wt x'_\nu, 64\cdot 32 \Theta)$.

Now suppose instead $\mu(B(0, 1/16))\leq \eps'$ holds. Since $\alpha_{\mu}(B(0, Q))<\alpha$ for every $Q\in [\tfrac{1}{16}, 2^{50}\Theta]$, we may repeatedly apply Lemma \ref{doubling} and get that $D_{\mu}(B(0, 2^{50}\Theta))\lesssim D_{\mu}(B(0, 1/16)) +\alpha \lesssim\eps'+\alpha.$ Therefore, if $\eps'=c\eps$ for a sufficiently small constant $c>0$ (depending on $d$, $s$ and $\Theta$), and $\alpha$ is small enough in terms of $\eps$, we get that (2) of the alternative holds.
\end{proof}

\section{The three main estimates}\label{threeestimates}

In this section we derive our main technical estimates.  Again fix $\mu\in \Ms$, $M\gg 1$ an even integer, $\alpha
\ll 1$ and $\eps \ll 1$.   We use the notation $\dashint_E fd\mu = \frac{1}{\mu(E)}\int_E fd\mu$. Recall the bump function $\eta_{\kap, r,x}$ from Section \ref{etadef}.

\subsection{Long range comparison lemma}

\begin{prop}\label{proplong}  If $x\in \R^d$, $r>0$, and $M'\in [M+2^{10}, 2^{11}M]$ is chosen as in Lemma \ref{thinshell}, then
\begin{equation}\begin{split}\nonumber\Bigl|T([1\!-\!\eta_{\frac{1}{M'},M'r,x}]\mu)(x) - \dashint_{B(x_0,Qr)}\!\!\!\!&T([1-\eta_{\frac{1}{M'},M'r,y}]\mu)(y)d\mu(y)\Bigl|\lesssim\frac{1}{M}
\end{split}\end{equation}
for every $Q\in [1,2^8]$ and $x_0\in B(x,  r)$.
\end{prop}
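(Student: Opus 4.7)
The plan is to establish the uniform pointwise estimate
$$|T(h_x\mu)(x)-T(h_y\mu)(y)|\lesssim \frac{1}{M} \quad \text{for every } y\in B(x_0,Qr),$$
where $h_x=1-\eta_{\frac{1}{M'},M'r,x}$ (and $h_y$ is defined analogously); the proposition follows by averaging this pointwise bound over $y\in B(x_0,Qr)$ against $\mu$. Every such $y$ satisfies $\delta:=|x-y|\leq (1+Q)r\leq (2^8+1)r<2^9 r$, which is small compared to $M'r\geq (M+2^{10})r$. Recall also that $h_x$ and $h_y$ are $(1/r)$-Lipschitz, vanish on $B(\cdot,M'r)$, and equal $1$ outside $B(\cdot,(M'+1)r)$.

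I would decompose the difference as
$$T(h_x\mu)(x)-T(h_y\mu)(y)=\int[K(x-z)-K(y-z)]h_x(z)\,d\mu(z)+\int K(y-z)[h_x(z)-h_y(z)]\,d\mu(z),$$
and control each piece separately. For the first (far-field) term, $h_x$ is supported in $\{|z-x|\geq M'r\}$ with $\delta\ll M'r$, so property (\ref{decay}) of Definition \ref{CZdef} yields $|K(x-z)-K(y-z)|\lesssim \delta/|z-x|^{s+1}$, and a standard dyadic decomposition together with $\mu\in\Ms$ bounds the integral by a constant multiple of $\delta/(M'r)\lesssim 1/M$. For the second (shell) term, the support of $h_x-h_y$ is contained in the annulus $\{M'r-\delta\leq |z-x|\leq (M'+1)r+\delta\}$, which because $\delta<2^{10}r$ sits inside $B(x,(M'+2^{10})r)\setminus B(x,(M'-2^{10})r)$, the thin annulus supplied by Lemma \ref{thinshell}. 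On this annulus $|y-z|\gtrsim M'r$, so $|K(y-z)|\lesssim (M'r)^{-s}$, and the integral is bounded above by
$$\frac{1}{(M'r)^s}\cdot\mu\bigl(B(x,(M'+2^{10})r)\setminus B(x,(M'-2^{10})r)\bigr)\lesssim \frac{1}{(M'r)^s}\cdot\frac{2\,\mu(B(x,2^{11}Mr))}{M}\lesssim \frac{1}{M},$$
invoking Lemma \ref{thinshell}, the growth bound $\mu\in\Ms$, and $M'\geq M$.

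The argument is largely routine; the key point of care is the coordination between the selection of $M'$ from Lemma \ref{thinshell} and the width of the support of $h_x-h_y$. Paying the enlargement $M\mapsto M'\leq 2^{11}M$ buys a prescribed thin annulus that absorbs this support uniformly over all admissible $y$, and this is what permits the shell contribution to be controlled without invoking any structural hypothesis on $\mu$ beyond the growth assumption $\mu\in\Ms$. No small-transportation hypothesis enters this proposition: the far-field contribution is purely kernel regularity plus $s$-power growth, and the shell contribution is purely the thin-boundary selection.
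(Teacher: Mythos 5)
Your proposal is correct and follows essentially the same route as the paper: the paper likewise splits the difference into a term with a fixed cutoff and differenced kernels (controlled via the smoothness estimate as in Lemma \ref{comparison}) and a term with a fixed kernel and differenced cutoffs (controlled by the thin annulus from Lemma \ref{thinshell} together with the growth bound $\mu\in\Ms$), then concludes by the triangle inequality and averaging.
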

\begin{proof}  Without loss of generality, take $r=1$.  For three points $y,y',y''\in B(x_0, 2^8)$, we observe two estimates.  Firstly,
\begin{equation}\begin{split}\nonumber\Bigl|\int_{\R^d}K(y-z)&\bigl[\eta_{\frac{1}{M'},M',y'}(z)-\eta_{\frac{1}{M'}, M', y''}(z)]d\mu(z)\Bigl|\\&\leq \int_{B(x_0, (M'+2^{10})\backslash (M'-2^{10}))}|K(y-z)|d\mu(z)\\&
\lesssim \frac{1}{M^s}\mu (B(x_0,(M'+2^{10})\backslash B(M'-2^{10})))\\&\stackrel{\text{Lemma }\ref{thinshell}}{\lesssim} \frac{1}{M}D_{\mu}(B(x_0, 2^{12}M))\lesssim \frac{1}{M}.
\end{split}\end{equation}
  Secondly, arguing as in Lemma \ref{comparison}, we have the estimate
$$\int_{\R^d}|K(y-z)-K(y'-z)|[1-\eta_{\frac{1}{M'}, M', y''}(z)]d\mu(z)\lesssim \frac{1}{M'}\lesssim \frac{1}{M}.
$$
The lemma now follows from the triangle inequality.
\end{proof}

\begin{cor} \label{corlong}   If, in addition to the assumptions of Proposition \ref{proplong}, $\alpha_{\mu}(B(x, \beta r))\leq \alpha$ for all $\beta \in [\frac{1}{M}, M^2]$, then for any $\delta>0$, we can
\begin{itemize}
\item choose $M$ sufficiently large in terms of $\delta$, and then
\item choose  $\alpha$ sufficiently small in terms of $M$ and $\delta$,
\end{itemize}
so that for all $t\in[\tfrac{r}{M},Mr]$,
$$\Bigl|T_t(\mu)(x) - \dashint_{B(x_0,Qr)}\!\!\!\!T(1-\eta_{\frac{1}{M'},M'r,y}\mu)(y)|d\mu(y)\Bigl|\leq \delta.$$
\end{cor}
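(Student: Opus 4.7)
I would combine Proposition \ref{proplong} with a comparison between the rough truncation $T_t(\mu)(x)$ and the smoothly truncated operator $T([1-\eta_{1/M',M'r,x}]\mu)(x)$. Proposition \ref{proplong} itself gives
\[
\Bigl|T([1-\eta_{1/M',M'r,x}]\mu)(x) - \dashint_{B(x_0,Qr)} T([1-\eta_{1/M',M'r,y}]\mu)(y)\,d\mu(y)\Bigr| \lesssim \tfrac{1}{M},
\]
so the essential work is to bound $|T_t(\mu)(x) - T([1-\eta_{1/M',M'r,x}]\mu)(x)|$ by $\delta/2$, uniformly in $t \in [r/M,Mr]$.

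For this, I would introduce a transition parameter $\kappa\in(0,1)$ and decompose
\[
T_t(\mu)(x) - T([1-\eta_{1/M',M'r,x}]\mu)(x) = \int G\,d\mu + E_1,
\]
where $G(y) := K(x-y)\Psi(|x-y|)$, with $\Psi(|x-y|) := \eta_{1/M',M'r,x}(y)-\eta_{\kappa,t,x}(y)$ a radial bump supported (in the radial variable) on $[t,\,M'r+r]$, and
\[
E_1 := \int K(x-y)\bigl[\eta_{\kappa,t,x}(y)-\chi_{\{|x-y|\leq t\}}(y)\bigr]\,d\mu(y)
\]
is the contribution of the thin inner annulus of width $\kappa t$. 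Lemma \ref{smallann} yields $|E_1| \lesssim \alpha/\kappa + \kappa$. For the main term, the hypothesis supplies $\nu\in\mathbb{S}_s$ with $x\in\mathcal{S}(\nu)$ and $\alpha_{\mu,\nu}(B(x,2^{12}Mr)) < 2\alpha$ (valid once $M\geq 2^{12}$, which is a consequence of the range $[1/M,M^2]$ containing $2^{12}M$). Radiality of $\Psi$ together with Remark \ref{intsym} (applied to $r\mapsto \Psi(r)/r^s$, which is Lipschitz with bounded derivative because $\Psi$ vanishes on $[0,t]$) gives $\int G\,d\nu = 0$, so $\int G\,d\mu = \int G\,d(\mu - c_{\mu,\nu}\nu)$.

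The execution then combines Lemma \ref{smoothK} (yielding $\|G\|_\infty \lesssim t^{-s}$ and $\|G\|_{\Lip}\lesssim 1/(\kappa t^{s+1})$) with a rescaling of $G$ into $\mathcal{F}_{x,2^{12}Mr}$; since $\supp G \subset B(x,2^{12}Mr)$ and $\varphi(|x-\cdot|/(2^{12}Mr)) \equiv 1$ there, the transportation estimate delivers
\[
\Bigl|\int G\,d\mu\Bigr| \lesssim \alpha\,\frac{(2^{12}Mr)^{s+1}}{\kappa t^{s+1}} \lesssim \frac{\alpha\,M^{2(s+1)}}{\kappa},
\]
using $t \geq r/M$. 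Collecting,
\[
\Bigl|T_t(\mu)(x) - \dashint_{B(x_0,Qr)} T([1-\eta_{1/M',M'r,y}]\mu)(y)\,d\mu(y)\Bigr| \lesssim \tfrac{1}{M} + \kappa + \tfrac{\alpha}{\kappa} + \tfrac{\alpha M^{2(s+1)}}{\kappa}.
\]
The choice $\kappa = \sqrt{\alpha}\,M^{s+1}$ reduces this to $\lesssim 1/M + \sqrt{\alpha}\,M^{s+1}$, which is $\leq \delta$ after first taking $M\gtrsim 1/\delta$ and then $\alpha \ll \delta^2 M^{-2(s+1)}$, consistent with the quantifier order in the statement. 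I expect the main obstacle to be exactly this loss factor $M^{2(s+1)}$ in the transportation step: the test function $G$ has Lipschitz constant dictated by the small inner scale $t\sim r/M$, while the only symmetric approximation available sits at the much larger scale $\sim Mr$, and bridging this scale gap forces $\alpha$ to be taken very small relative to powers of $M$.
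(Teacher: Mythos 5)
Your proposal is correct and follows essentially the same route as the paper: split $T_t(\mu)(x)-T([1-\eta_{1/M',M'r,x}]\mu)(x)$ into the thin inner-annulus error (controlled via Lemma \ref{smallann}, i.e.\ Corollary \ref{rough2smooth}) plus a radially cut-off kernel term that is annihilated by the symmetric measure through Remark \ref{intsym}, estimated by Lemma \ref{smoothK} and the transportation bound, and then optimize $\kappa$ and choose $M$ before $\alpha$. The only differences are bookkeeping ones (the paper tests against $\F_{x,M'}$ rather than $\F_{x,2^{12}Mr}$ and lands on the bound $\frac{1}{M}+\kappa+\frac{\alpha}{\kappa}+\frac{\alpha M^{s+2}}{\kappa^{s+1}}$ with $\kappa=\alpha^{1/(2(s+1))}$), which do not affect the conclusion.
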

\begin{proof}
Suppose $r=1$.  Fix $\kap\in (0, \tfrac{1}{2})$.   In order to apply Proposition \ref{proplong}, we should estimate $|T_{t}(\mu)(x)- T([1\!-\!\eta_{\frac{1}{M'},M',x}]\mu)(x)|$, which is at most
\begin{equation}\begin{split}\nonumber
\Bigl| T_t(\mu)(x)- &T([1 - \eta_{\kap, t,x}]\mu)(x)\Bigl|+ \Bigl|T([\eta_{\frac{1}{M'},M',x} - \eta_{\kap, t,x}]\mu)(x)\Bigl|= I+II.
\end{split}\end{equation}
Appealing to Corollary \ref{rough2smooth} we observe that $I \lesssim \bigl(\frac{\alpha}{\kap} +\kap\bigl)$.  To estimate II, first observe that $\|\eta_{\kap, t,x}\|_{\Lip}\lesssim \frac{M}{\kap}$ ($t\geq \frac{1}{M}$), and $\|\eta_{\frac{1}{M'},M',x}\|_{\Lip}\lesssim 1$.   Therefore, from Lemma \ref{smoothK} we infer that the function $ f(y) = K(x-y)[\eta_{\frac{1}{M'}, M',x}(y)-\eta_{\kap, t,x}(y)]$ has Lipschitz norm at most $\frac{CM}{\kap^{s+1}}$, and so the function $\tfrac{c\kap^{s+1}}{M^2} f\in \F_{x,M'}$ for some small constant $c>0$.  On the other hand, if $\nu\in \S$ and $x\in \S(\nu)$, then from Remark \ref{intsym} we have $\int_{\R^d}f(y)d\nu=0$.  Insofar as $\alpha_{\mu}(B(x, M'))<\alpha$, we therefore obtain
$$II=\Bigl|\int_{\R^d}fd\mu\Bigl|\lesssim M^{s+2}\frac{\alpha}{\kap^{s+1}}.$$
 The quantity we want to estimate is therefore bounded by a constant multiple of $\frac{1}{M}+\kap + \frac{\alpha}{\kap}+ \frac{\alpha M^{s+2}}{\kap^{s+1}}$. Setting $\kap=\alpha^{\frac{1}{2(s+1)}}$, we see that this quantity can be made smaller than $\delta$ by first fixing $M$ small enough in terms of $\delta$ and then $\alpha$ small enough in terms of $\delta$ and $M$.
\end{proof}
\subsection{Intermediate contribution lemma}

\begin{prop}\label{propinter}  If $x_0\in B(x, r)$, and $\alpha_{\mu}(B(x, \beta r))< \alpha$ for every $\beta \in [1,M^3]$, then for any $\delta>0$, we can choose $\alpha$ small enough in terms of $M$ and $\delta$ such that
$$\Bigl|\int_{B(x_0,Qr)}T([\eta_{\frac{1}{M},Mr,y}- \eta_{\frac{1}{Q'}, Q'r,y}]\mu)(y)d\mu(y)\Bigl|\leq C \delta \,\mu(B(x_0,Qr))  $$
for any $M > Q'\geq Q$ and $Q\in [4 , M/2]$.
\end{prop}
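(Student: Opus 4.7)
After rescaling so that $r = 1$ (Remark \ref{scaling}), I will write $F(y) := T([\eta_{\frac{1}{M},M,y}-\eta_{\frac{1}{Q'},Q',y}]\mu)(y) = \int K(y-z)\psi(|y-z|)\,d\mu(z)$, where $\psi(t) := \eta_{1/M, M}(t) - \eta_{1/Q', Q'}(t)$ is a radial Lipschitz bump supported in $[Q', M+1]$ with $\|\psi\|_\infty \leq 1$ and $\|\psi\|_{\Lip} \lesssim 1$. Setting $I := \int_{B(x_0, Q)} F\,d\mu$, the plan is to combine a pointwise estimate of $F$ on the support of a suitable symmetric measure $\nu$ with a transportation argument in the outer integral.

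The first step is a pointwise bound on $F$ on $\supp(\nu)$. I will fix $\nu \in \SK$ with $x \in \mathcal{S}(\nu)$ realising $\alpha_{\mu,\nu}(B(x,M)) < \alpha$ and let $c := c_{\mu,\nu}(x, M)$. For $y \in \mathcal{S}(\nu) \cap B(x, 2M)$---in particular for $y \in \supp(\nu)$ in that region---Remark \ref{intsym} applied to the Lipschitz function $t \mapsto \psi(t)/t^s$ (Lipschitz because $\psi$ vanishes on $[0, Q']$) will yield $\int K(y-z)\psi(|y-z|)\,d\nu(z) = 0$, whence $F(y) = \int K(y-z)\psi(|y-z|)\,d(\mu - c\nu)(z)$. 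By Lemma \ref{smoothK}, the $z$-integrand has Lipschitz norm $\lesssim Q'^{-s}$ and support in $B(x, 4M)$, so scaling it to lie in $\F_{x, M}$ and applying the transportation bound gives $|F(y)| \leq C \alpha M^{s+1}/Q'^s \leq \delta/2$ once $\alpha$ is small enough in terms of $M$, $Q'$, and $\delta$.

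Next, I will replace the sharp cutoff $\chi_{B(x_0, Q)}$ by a Lipschitz approximation $\phi$ with $\phi \equiv 1$ on $B(x_0, Q)$, $\supp(\phi) \subset B(x_0, (1+\kap)Q)$, and $\|\phi\|_{\Lip} \lesssim 1/(\kap Q)$; the sharp-to-smooth error is controlled via Lemma \ref{smallann} together with the dyadic-summation bound $\|F\|_\infty \lesssim \log M$. Applying the transportation bound at scale $M$ to $\phi F$ (whose Lipschitz norm is estimated through Lemma \ref{smoothK} and $\|\phi\|_{\Lip}$) produces $\int \phi F\,d\mu = c \int \phi F\,d\nu + O(\alpha M^{2s+1}/Q'^s)$. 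In the $\nu$-integral, integration is $\nu$-a.e.\ over $\supp(\nu)$, so the pointwise estimate from the first step applies to give $|c \int \phi F\,d\nu| \leq (\delta/2)\, c\nu(B(x_0, (1+\kap) Q))$.

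To convert this into the desired relative bound, I will compare $c\nu(B(x_0, (1+\kap) Q))$ with $\mu(B(x_0, Q))$ by applying the transportation bound at scale $M$ to a Lipschitz bump on $B(x_0, 2Q)$ and then invoking the doubling property (Lemma \ref{doubling}). Assembling the pieces and choosing first $\kap$ and then $\alpha$ small enough in terms of $M$, $Q'$, and $\delta$ will yield the required inequality. The main obstacle is this last combination: the sharp-to-Lipschitz and transportation errors naturally scale as $Q^s$ in absolute terms while the target $C\delta \mu(B(x_0, Q))$ is relative to $\mu$, so the transportation comparison of $c\nu$ and $\mu$ has to be used carefully (and, if needed, one can separate the case in which $\mu(B(x_0, Q))/Q^s$ is below a fixed threshold, which is handled by the density branch of Lemma \ref{alt}).
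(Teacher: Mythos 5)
Your overall scheme is the same as the paper's: smooth the sharp cutoff $\chi_{B(x_0,Q)}$, transport $\mu$ to a symmetric measure $\nu$ in both the inner and the outer integration, and kill the resulting pure $\nu$-integral with Remark \ref{intsym}. (Your reorganization --- proving a pointwise bound for $F$ on $\supp(\nu)$ rather than showing that the double $\nu$-integral vanishes --- is cosmetic; both rest on the same application of Remark \ref{intsym} at points of $\mathcal{S}(\nu)$.) The problem is that the point you defer to the last sentence, namely converting absolute errors of size $O(\alpha M^{C}/\kap)$ and $O(\kap Q^s \log M)$ into the relative bound $C\delta\,\mu(B(x_0,Qr))$, is the actual crux of the proposition, and your proposed resolution does not close it. First, Lemma \ref{alt} is not the relevant tool: its density branch concerns the ball $B(x,2^{50}\Theta r)$ and is used only to select $\nu$ for Proposition \ref{propunit}; what is needed here is Corollary \ref{longden}, which propagates a density dichotomy from scale $Qr$ up to scale $M^2 r$. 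Second, and more importantly, in the low-density branch it is not enough that $\mu(B(x_0,Qr))/(Qr)^s$ is below a threshold: the target $C\delta\,\mu(B(x_0,Qr))$ is itself proportional to that small quantity, so an absolute smallness statement proves nothing. What actually saves the low-density case is that $D_\mu(B(x_0,Qr))\leq\theta$ implies, via the doubling estimates, $D_\mu(B(x_0,M^2r))\lesssim M^{C_0}(\theta+\alpha)$, whence $\|F\|_\infty\lesssim \mu(B(x_0,M^2r))/(Q'r)^s\lesssim M^{C_0}(\theta+\alpha)\leq\delta$ for $\theta,\alpha$ small in terms of $M,\delta$; then $\int_{B(x_0,Qr)}|F|\,d\mu\leq\delta\mu(B(x_0,Qr))$ trivially, with no transportation needed.

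Relatedly, your bound $\|F\|_\infty\lesssim\log M$ is too lossy for the sharp-to-smooth step even in the high-density branch: combined with Lemma \ref{smallann} it forces $\kap\log M\lesssim\delta\theta$, which is only consistent if the density threshold $\theta$ is independent of $\alpha$ (so you must take $\theta=\theta(M,\delta)$ rather than the paper's $\sqrt{\alpha}$), or else you should replace $\log M$ by $\mu(B(x_0,M^2r))/(Q'r)^s$, as the paper does, so that the annulus error is directly dominated by $\sqrt{\alpha}\,\mu(B(x_0,M^2r))\lesssim\sqrt{\alpha}\,M^{C_0}\mu(B(x_0,Qr))$. With these repairs --- the explicit dichotomy of Corollary \ref{longden} and the sharper $L^\infty$ bound on $F$ --- your argument closes and is, in substance, the proof given in the paper.
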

\begin{proof}
Again without loss of generality we assume that $r=1$.  The hypotheses of Corollary \ref{longden} are satisfied, and so we have that either
\begin{enumerate}
\item $\mu(B(x_0,M^2)) \lesssim M^{C_0} \sqrt{\alpha}$, or
\item  $\mu(B(x_0,M^2)) \lesssim M^{C_0} \mu(B(x_0,Q))$, for some constant $C_0$.
\end{enumerate}

If (1) occurs we can appeal to the size estimate (\ref{size}) from Definition \ref{CZdef}, obtaining
\begin{equation}\begin{split}\nonumber\Bigl|\int_{B(x_0,Q)}&T([\eta_{\frac{1}{M},M,y}- \eta_{\frac{1}{Q'}, Q',y}]\mu)(y)d\mu(y)\Bigl| \\
&= \Bigl|\int_{B(x_0,Q)}\int_{\mathbb{R}^d} K(y-z)[\eta_{\frac{1}{M},M,y}(z)- \eta_{\frac{1}{Q'}, Q',y}(z)]\,d\mu(z) \,d\mu(y) \Bigl|\\
&\leq \int_{B(x_0,Q)}\int_{B(y,2M) \backslash B(y,Q')} \Bigl|K(y-z)\Bigl|\,d\mu(z) \,d\mu(y) \\
& \lesssim \mu(B(x_0,Q)) \frac{\mu(B(x_0,M^2))}{(Q')^s} \lesssim \mu(B(x_0,Q)) M^{C_0} \sqrt{\alpha}\\
&<\delta\mu(B(x_0,Q))
\end{split}\end{equation}
provided $\alpha$ is sufficiently small in terms of $M$ and $\delta$.

Suppose now that we are in the scenario (2). We wish to estimate
\begin{equation}\label{someintegral} \Bigl| \int_{B(x_0,Q)}T([\eta_{\frac{1}{M},M,y}- \eta_{\frac{1}{Q'}, Q',y}]\mu)(y)d\mu(y)\Bigl|.\end{equation}
 Instead we will consider
  \begin{equation}\begin{split}\label{instead}
 \Bigl| \int_{\mathbb{R}^d}T([\eta_{\frac{1}{M},M,y}- \eta_{\frac{1}{Q'}, Q',y}]\mu)(y)\eta_{\sqrt{\alpha},Q,x_0}(y)\,d\mu(y)\Bigl|,
  \end{split}\end{equation}
  because the difference between (\ref{someintegral}) and (\ref{instead}) can be bounded using the size estimate (\ref{size}) from Definition \ref{CZdef} by
 \begin{equation}\begin{split}\nonumber
   \int_{\R^d}  &\left( \eta_{\sqrt{\alpha},Q,x_0}(y)-\chi_{B(x_0,Q)}(y)\right) |T([\eta_{\frac{1}{M},M,y}- \eta_{\frac{1}{Q'}, Q',y}]\mu)(y)| \,d\mu(y)\\
& \lesssim  \frac{\mu(B(x_0,M^2))}{ (Q')^s}  \int_{\R^d} \left( \eta_{\sqrt{\alpha},Q,x_0}(y)-\chi_{B(x_0,Q)}(y)\right)\,d\mu(y) \\
 &\stackrel{\text{Lemma }\ref{smallann}}{\lesssim} \frac{\mu(B(x_0,M^2))}{(Q')^s} \sqrt{\alpha}Q^s\lesssim \sqrt{\alpha}M^{C_0}\mu(B(x_0,Q)<\frac{\delta}{3}\mu(B(x_0,Q)),
 \end{split}\end{equation}
 as long as $\alpha$ is small enough in terms of $M$ and $\delta$.

 To estimate (\ref{instead}) we introduce $\nu \in \SK$ such that $\alpha_{\mu,\nu}(B(x,M^2/8)) \leq \alpha$.  Our goal will be to replace $\nu$ by $\mu$ in every instance in (\ref{instead}), since the resulting integral is zero due to the fact that $\nu$ is symmetric (as we shall show below).

 We first replace $\mu$ by $\nu$ in the operator appearing in (\ref{instead}), so we need to estimate
 \begin{equation}\begin{split}\label{int}&\Bigl|\int_{\R^d} \eta_{\sqrt{\alpha},Q,x_0}(y) T([\eta_{\frac{1}{M},M,y}- \eta_{\frac{1}{Q'}, Q',y}](\mu - c_{\mu,\nu}\nu))(y)\,d\mu(y)\Bigl|,
 \end{split}\end{equation}
 where  $c_{\mu,\nu}=\frac{I_{\mu}(B(x,M^2/8))}{I_{\nu}(B(x,M^2/8))}$. (The factor of $1/8$ here is merely because then $\mathcal{I}_{\mu}(B(x, M^2/8))\leq \mu(B(x, M^2/2))\leq \mu(B(x_0, M^2))$.)

To estimate (\ref{int}), we observe from Lemma \ref{smoothK} that, for any $y \in B(x_0,2Q)$ (which contains $\supp \eta_{\sqrt{\alpha},Q,x_0}$) the function $$z \to K(y-z)(\eta_{1/M,M,y}(z)-\eta_{1/Q',Q'y}(z))$$
has Lipschitz norm at most $C$ (recall $Q'\geq 1$). Consequently,
\begin{equation} \begin{split} \sup_{y \in \supp (\eta_{\sqrt{\alpha},Q,x_0})} \Bigl| &T([\eta_{\frac{1}{M},M,y}- \eta_{\frac{1}{Q'}, Q',y}](\mu - c_{\mu,\nu}\nu))(z)\Bigl| \\
& \lesssim M^{2(s+1)} \alpha_{\mu}(B(x,M^2/8)) \lesssim  M^{2(s+1)} \alpha.
\end{split} \end{equation}

 Therefore we can bound  (\ref{int}) by a constant multiple of
 $$M^{2(s+1)} \alpha \mu(B(x_0,2Q))\lesssim M^{C_0+2(s+1)}\alpha \mu(B(x_0,Q))<\frac{\delta}{3}\mu(B(x_0, Q)),$$
provided $\alpha$ is sufficiently small in terms of $M$ and $\delta$.

Our next step is to estimate
 \begin{equation}\label{int2}
\Bigl|\int_{\mathbb{R}^d} \eta_{\sqrt{\alpha},Q,x_0}(y)T([\eta_{\frac{1}{M},M,y}- \eta_{\frac{1}{Q'}, Q',y}](c_{\mu,\nu}\nu))(y)\,d(\mu-c_{\mu,\nu} \nu)(y)\Bigl|.
\end{equation}
We next claim that, on $B(x_0, 2Q)$, the function
$$y \to T([\eta_{\frac{1}{M},M,y}- \eta_{\frac{1}{Q'}, Q',y}]\,\nu)(y),$$
is $\nu(B(x_0,3M))$-Lipschitz, and also bounded by $\nu(B(x_0, 3M))$.

For $y\in B(x_0, Q)$, the function is an integral (with respect to $\nu$) over $z\in B(x_0, 3M)$ of $F_z(y) = K(y-z)[\eta_{\frac{1}{M},M,y}(z)- \eta_{\frac{1}{Q'}, Q',y}(z)]$.  But for every $z\in \R^d$, Lemma \ref{smoothK} ensures that $F_z$ has Lipschitz norm and $L^{\infty}$ norm at most some constant $C>0$, so the claim follows.

Consequently, recalling the definition of $c_{\mu.\nu}$ we use the fact that $\alpha_{\mu,\nu}(B(x,M^2/8)) \leq \alpha$ to estimate (\ref{int2}) by
\begin{equation}\begin{split}\nonumber C\frac{\mathcal{I}_{\mu}(B(x_0, M^2/8))}{\mathcal{I}_{\nu}(B(x_0, M^2/8))}\frac{\nu(B(x_0, 3M))}{\sqrt{\alpha}}M^{2(s+1)}\alpha&\lesssim M^{C_0+2(s+1)}\sqrt{\alpha}\mu(B(x_0,Q))\\&<\frac{\delta}{3}\mu(B(x_0,Q)),
\end{split}\end{equation}
provided $\alpha$ is small enough in terms of $\delta$ and $M$.

Our last claim is that  $$ \int_{\R^d} \eta_{\sqrt{\alpha},Q,x_0}(y)\int_{\R^d} K(y-z)[\eta_{\frac{1}{M},M,y}(z)- \eta_{\frac{1}{Q'}, Q',y}(z)] \,d \nu(z)   \,d\nu(y)=0. $$
In order to see this, we focus once more in the inner integral. For $y \in B(x_0,2Q) \cap \supp(\nu)$, the fact that $\nu$ is symmetric, we get that from Remark \ref{intsym},
$$\int_{\R^d} K(y-z)[\eta_{\frac{1}{M},M,y}(z)- \eta_{\frac{1}{Q'}, Q',y}(z)] \,d \nu(z)=0 \text{ for every }y \in \supp(\nu).$$
Combining these estimates the proposition follows.
\end{proof}

\subsection{Unit scale averaging lemma}
\begin{prop}\label{propunit} Assume that $x_0 \in B(x,r)$, $\alpha_{\mu}(B(x, \beta r)) < \alpha$ for every $\beta \in [1,M^2]$ and let $\gamma\in (0,1)$.

Provided that $\gamma M \gg1$, then for any $\delta>0$,
\begin{itemize}
\item we can choose $\eps$ sufficiently small depending on $\delta$, and
\item $\alpha$ sufficiently small depending on $\eps$ and $M$
\end{itemize}
 such that, if $\alpha_{\mu,\nu}(B(x,\gamma M r))\leq \alpha$ for $\nu\in \SK$ satisfying either
\begin{enumerate}
\item $\nu$ is reflection symmetric in $B(x_0, 32 r)$, or
\item $\mu(B(x, 32 r))\leq \eps r^s$, in which case we set $x_0=x$,
\end{enumerate}
then
\begin{equation}\label{unitint}\Bigl|\int_{4}^{8}\int_{B(x_0, Qr )}  T(\eta_{\frac{1}{16},16 r,y}\chi_{B(x_0,Qr)^c}\mu)d\mu(y)dQ\Bigl|< \delta \mu(B(x_0,2^{10}r)).
\end{equation}
\end{prop}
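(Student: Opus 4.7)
Denote the left-hand side of (\ref{unitint}) by $I$. My first move is to interchange the $Q$-integration with the spatial integrals by Fubini, obtaining $I = \iint F(y,z)\, d\mu(y)\, d\mu(z)$ where $F(y,z) = K(y-z)\, \eta_{\frac{1}{16}, 16r, y}(z)\, h(y,z)$ and
$$
h(y,z) := \int_4^8 \chi_{B(x_0, Qr)}(y)\, \chi_{B(x_0, Qr)^c}(z)\, dQ = \bigl(\min(|z-x_0|/r, 8) - \max(|y-x_0|/r, 4)\bigr)_+.
$$
The function $h$ is $(1/r)$-Lipschitz, takes values in $[0,4]$, and crucially \emph{vanishes on the diagonal} $\{y=z\}$, so $|h(y,z)| \lesssim \min(|y-z|/r, 1)$. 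Consequently $|F(y,z)| \lesssim |y-z|^{1-s}/r$ on its support $\{y \in B(x_0, 8r),\ |y-z| \leq 17r\}$, which by a dyadic computation using $\mu \in \Ms$ is absolutely integrable against $d\mu \otimes d\mu$.

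\textbf{Case (2).} Since $x_0 = x$ and $\supp(F) \subset B(x, 32r)^2$, the hypothesis $\mu(B(x, 32r)) \leq \eps r^s$ upgrades the growth bound to $\mu(B(y,\rho)) \leq \min(\rho^s, \eps r^s)$ for $y \in B(x_0, 8r)$, $\rho \leq 24r$. A dyadic splitting at scale $\rho_* \sim \eps^{1/s}r$ then gives $\int_{B(y, 17r)} |y-z|^{1-s}\, d\mu(z) \lesssim \eps^{\kappa} r$ for some $\kappa = \kappa(s) > 0$ (with a harmless logarithmic factor if $s = 1$). Hence $|I| \lesssim \eps^{\kappa} \mu(B(x_0, 8r)) \leq \eps^{\kappa} \mu(B(x_0, 2^{10} r))$, which is at most $\delta\, \mu(B(x_0, 2^{10} r))$ once $\eps$ is chosen small in terms of $\delta$.

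\textbf{Case (1): cancellation from reflection symmetry.} The measure-preserving involution $(y,z) \mapsto (2x_0 - y, 2x_0 - z)$ preserves $\nu \otimes \nu$ on $B(x_0, 32r)^2$ by reflection symmetry, preserves both $\eta_{\frac{1}{16}, 16r, y}(z)$ and $h(y,z)$ (each depending only on $|y-z|$ and distances to $x_0$), and negates $K(y-z)$ by oddness. Since $\supp(F) \subset B(x_0, 32r)^2$, this forces $\widetilde{I} := c_{\mu,\nu}^2 \iint F\, d\nu\otimes d\nu = 0$, and it suffices to bound
$$
I - \widetilde{I} = \iint F\, d(\mu - c_{\mu,\nu}\nu)(y)\otimes d\mu(z)\; +\; c_{\mu,\nu}\iint F\, d\nu(y) \otimes d(\mu - c_{\mu,\nu}\nu)(z).
$$

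\textbf{Near/far splitting and optimization.} To each of the two cross terms I apply $\alpha_{\mu,\nu}(B(x,\gamma M r)) \leq \alpha$ after a smooth split $F = F_{\mathrm{near}} + F_{\mathrm{far}}$ at $|y-z|\sim \tau$, where $\tau \in (0,r)$ is a free parameter. The near part is bounded directly by size: $\int |F_{\mathrm{near}}(y, z)|\, d\mu(z) \lesssim \tau/r$ (and the same estimate against $c_{\mu,\nu}\, d\nu$ via Assumption \ref{SKdef}(1)), contributing $\lesssim (\tau/r)\, \mu(B(x_0, 2^{10} r))$. For the far part, Lemma \ref{smoothK} applied to $K\cdot\eta_{\frac{1}{16}, 16r, y}$ truncated at inner radius $\sim\tau$, combined with the Lipschitz bounds on $h$, yields $\|F_{\mathrm{far}}(\cdot, z)\|_{\Lip} \lesssim \tau^{-s-1}$; hence $y \mapsto \int F_{\mathrm{far}}(y,z)\, d\mu(z)$ has Lipschitz norm $\lesssim \tau^{-s-1} r^s$ and is supported in $B(x_0, 8r) \subset B(x, 4\gamma M r)$ (using $\gamma M \gg 1$). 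Feeding this test function into the definition of $\alpha_{\mu,\nu}$ at scale $\gamma M r$ gives a bound $\lesssim (\gamma M)^{s+1} \tau^{-s-1} r^{2s+1}\, \alpha$ on the far contribution, and similarly for the cross term. Balancing $\tau \sim \alpha^{1/(2s+2)} r$ and taking $\alpha$ small in terms of $M$ and $\delta$, together with Corollary \ref{longden} to relate $r^s$ to $\mu(B(x_0, 2^{10}r))$ whenever the integrand is non-vanishing, delivers the required inequality. The principal obstacle is the cross term $c_{\mu,\nu}\iint F\, d\nu \otimes d(\mu - c_{\mu,\nu}\nu)$: one must check that $z \mapsto \int F_{\mathrm{far}}(y,z)\, d\nu(y)$ is Lipschitz with norm controlled uniformly in the choice of $\nu \in \SK$, which in turn requires the small boundaries property to bound $\nu(B(x_0, 8r))$ in terms of $\mathcal{I}_\nu(B(x, \gamma M r))$ so that $c_{\mu,\nu}\, \nu(B(x_0, 8r)) \lesssim \mu(B(x_0, 2^{10} r))$.
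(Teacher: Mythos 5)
Your proposal follows essentially the same route as the paper: Fubini in $Q$ produces the kernel $H(y,z)=K(y-z)L(y,z)$ with $|L(y,z)|\lesssim |y-z|$, case (2) is a pure size estimate, and case (1) proceeds by replacing $\mu$ with $c_{\mu,\nu}\nu$ in both slots (controlling the errors by feeding Lipschitz test functions built from Lemma \ref{smoothK} into $\alpha_{\mu,\nu}(B(x,\gamma M r))$, with the small-boundaries property controlling $c_{\mu,\nu}\nu(B(x_0,8r))$) and killing the resulting $\nu\otimes\nu$ integral by the antisymmetry $H(-y,-z)=-H(y,z)$ together with reflection symmetry. The only structural difference is where the near/far splitting happens: the paper splits off the local part $F_\mu^{\kap}$ \emph{before} introducing $\nu$, so the near part is only ever integrated against $\mu\otimes\mu$, whereas you place the near part inside the cross terms $I-\widetilde I$.

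That ordering creates the one step that does not survive scrutiny as written. The second cross term contains $c_{\mu,\nu}^2\iint F_{\mathrm{near}}\,d\nu\otimes d\nu$, and you propose to bound its near part ``by size,'' asserting $\int|F_{\mathrm{near}}(y,z)|\,c_{\mu,\nu}d\nu(z)\lesssim \tau/r$ via Assumption \ref{SKdef}(1). But the small-boundaries property only gives doubling for $\nu$, not the growth bound $c_{\mu,\nu}\nu(B(y,\rho))\lesssim\rho^s$ at scales $\rho\ll r$; since $c_{\mu,\nu}=\I_\mu/\I_\nu$ can be arbitrarily large (nothing prevents $\nu$ from being a very small multiple of a symmetric measure), the renormalized measure $c_{\mu,\nu}\nu$ need not lie in any fixed multiple of $\Ms$, and for $s>1$ the integral $\int_{|y-z|\le\tau}|y-z|^{1-s}\,d(c_{\mu,\nu}\nu)(z)$ then picks up an uncontrolled power of $c_{\mu,\nu}$. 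The repair is immediate and costs nothing: since your near/far cutoff is radial in $|y-z|$, the function $F_{\mathrm{near}}$ is itself odd under the involution $(y,z)\mapsto(2x_0-y,2x_0-z)$, so $c_{\mu,\nu}^2\iint F_{\mathrm{near}}\,d\nu\otimes d\nu=0$ exactly and never needs a size bound; alternatively, adopt the paper's ordering and discard the near part against $\mu\otimes\mu$ before any replacement. With that adjustment (and the observation, which you should make explicit, that in case (1) one may assume $\mu(B(x,32r))\ge\eps r^s$ so that Corollary \ref{longden} converts $r^s$ into $\eps^{-1}M^{C_0}\mu(B(x_0,2^{10}r))$), your argument goes through.
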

\begin{proof}We again assume $x_0=0$ and $r=1$ (Remark \ref{scaling}).
Rewrite the integral appearing in (\ref{unitint}) as
\begin{align*}
& \int_{4}^{8}\int_{B(0, Q)}  T(\eta_{\frac{1}{16},16,y}\;\chi_{B(0,Q)^c}\mu)d\mu(y)dQ\\
&= \int_{4}^{8} \int_{B(0,Q)} \int_{B(0,Q)^c} K(y-z)\eta_{\frac{1}{16},16,y}(z) \,d\mu(z) \,d\mu(y) \,dQ \\
&=\int_{\mathbb{R}^d} \eta_{1,8,0}(y)\int_{\mathbb{R}^d} \eta_{\frac{1}{16},16,y}(z)  K(y-z) \int_{4}^{8}\chi_{\left\{\substack{ |y|< Q,\\ |z|>Q}\right\}}(Q) \,dQ \,d\mu(z)\,d\mu(y).
\end{align*}
Denote by $L(y,z)$ the function
$$\int_{4}^{8}\chi_{\left\{\substack{ |y|< Q,\\ |z|>Q}\right\}}(Q) \,dQ = \left[ \{ \min(8,|z|) -4 \}_+ - \{ \max(4,|y|) - 4\}_+ \right]_+.$$
   Notice that $|L(y,z)| \lesssim |y-z|$, and for any $z$, $L(\cdot,z)$ is a Lipschitz function with Lipschitz norm $\|L(\cdot,z)\|_{\lip} \lesssim 1$. From now on, we denote $H(y,z)=K(y-z)L(y,z)$ and
   $$F_{\mu}(y)=\int_{\R^d} H(y,z) \eta_{\frac{1}{16},16,y}(z) \,d\mu(z).$$
   Fix $\kap >0$.  Decompose $F_\mu$ into its local and its non-local parts,
      $$F_{\mu}(y)=F_{\mu}^\kap(y)+F_{\mu,\kap}(y),$$
   where
   $$F_{\mu,\kap}(y)=\int_{\R^d} H(y,z)(\eta_{\frac{1}{16},16,y}(z)-\eta_{\frac{1}{16},\kap,y}(z))\,d\mu(z),$$
   and
   $$F_{\mu}^\kap(y)=\int_{\R^d} H(y,z)\eta_{\frac{1}{16},\kap,y}(z) \,d\mu(z).$$
   Appealing to the size estimates of $K$ and $L$, we have $|H(y,z)|\lesssim \frac{1}{|y-z|^{s-1}}$.  Therefore
    $$|F_{\mu}^\kap(y)|\leq  \int_{|y-z|\leq 2\kap} \frac{1}{|y-z|^{s-1}} \,d\mu(z) \lesssim \kap,$$
  and
   $$\Bigl|  \int_{\mathbb{R}^d} F_{\mu}(y)\eta_{1,8,0}(y) \,d\mu(y) \Bigl| \leq \Bigl| \int_{\mathbb{R}^d} F_{\mu,\kap}(y) \eta_{1,8,0}(y) \,d\mu(y) \Bigl|+ \frac{\delta}{3}\mu(B(0,16))$$
if $\kap$ is a chosen to be a small constant multiple of $\delta$.

    Fix $\nu \in \SK$ with $\alpha_{\mu,\nu}(B(x,\gamma M))\leq \alpha$ as in the statement of the proposition.   In the case $(2)$, we have that for $y\in B(0,9)$, the set $B(y,20)\subset B(0,32)$ and as $\mu(B(0, 32))\leq \eps$ we get that
   $$|F_{\mu,\kap}(y)|\leq \int_{\kap  \leq |y-z|\leq 20  } \frac{1}{|y-z|^{s-1}} \,d\mu(z) \leq \kap^{1-s} \varepsilon\lesssim \delta^{1-s}\eps.$$
   Integrating this bound with respect to the measure $\eta_{1,8,0}(y)d\mu(y)$, we therefore infer that
   $$   \Bigl|  \int_{\R^d} F_{\mu}(y)\eta_{1,8,0}(y) \,d\mu(y) \Bigl| <\delta \mu(B(0,16)),$$
as long as $\eps$ is sufficiently small in terms of $\delta$.

 We now consider the case when $\nu$ satisfies $(1)$, i.e. $\nu$ is reflection symmetric in $B(0,32)$, with $0 \in B(x,1)$. Our goal will be to replace
 \begin{equation}\label{doublemuunit} \int_{\R^d} F_{\mu,\kap}(y)\eta_{1,8,0}(y) \,d\mu(y) \end{equation}
 with
 $$  \int_{\R^d} F_{\nu,\kap}(y)\eta_{1,8,0}(y) \,d\nu(y),$$
 and show that the latter integral equals zero.
 Of course, we may assume that $\mu(B(x,32)) \geq \varepsilon$.  Hence, as long as $\sqrt{\alpha} \leq \varepsilon$, we can apply Corollary \ref{longden} in order to get
\begin{equation}\label{dens} D_\mu(B(x,M^2))  \lesssim M^{C_0}D_{\mu}(B(x, 32)) .\end{equation}
We next claim that
$$\|F_{\mu,\kap}\|_{\Lip(B(0,9))} \lesssim \frac{\mu(B(0,32))}{\kap^{s+1}}.$$
To see this, we first observe from Lemma \ref{smoothK} that, for any $z\in B(0,32)$, the function $y\mapsto W(y,z) = K(y-z)L(y,z)(\eta_{\frac{1}{16},16,y}(z)-\eta_{\frac{1}{16},\kap,y}(z))$ has Lipschitz norm at most $\frac{C}{\kap^{s+1}}$.  On the other hand, if $y\in B(0,9)$, then $z\mapsto W(y,z)$ is supported in $B(0, 32)$.  Since $F_{\mu,\kap}(y) = \int_{R^d}W(y,z)d\mu(z)$, the claim follows by combining these two observations.

   We now begin estimating (\ref{doublemuunit}).  Since $\frac{C \kap^{s+1}}{\gamma M}F_{\mu,\kap}\in \F_{0,\gamma M}$,
   \begin{align*}
    \Bigl|\int_{\mathbb{R}^d} F_{\mu,\kap}(y)\eta_{1,8,0}(y) \,d\left(\mu- \frac{I_\mu(B(x,\gamma M))}{I_\nu(B(x,\gamma M))}\nu \right)(y)\Bigl|& \lesssim \frac{(\gamma M)^{s+1}}{ \kap^{s+1}} \alpha \mu(B(0,32)) \\
   & \lesssim \frac{M^{s+1}}{ \kap^{s+1}} \alpha \mu(B(0,32)),   \end{align*}
   and the right hand side is at most $\frac{\delta}{3}\mu(B(0, 32))$ provided that $\alpha$ is sufficiently small in terms on $M$ and $\kap$.

  We next consider the expression
  \begin{equation}\label{int3}\frac{I_\mu(B(x,\gamma M))}{I_\nu(B(x,\gamma M))} \int_{\R^d}  \eta_{1,8,0}(y) \left(F_{\mu,\kap}(y)-\frac{I_\mu(B(x,\gamma M))}{I_\nu(B(x,\gamma M))}F_{\nu,\kap}(y) \right) \,d \nu(y).\end{equation}

  Write $F_{\mu,\kap}(y)-\frac{I_\mu(B(x,\gamma M))}{I_\nu(B(x,\gamma M))}F_{\nu,\kap}(y)$ as
  $$ \int_{\R^d} H(y,z)(\eta_{\frac{1}{16},16,y}(z)-\eta_{\frac{1}{16},\kap,y}(z)) \,d\left(\mu- \frac{I_\mu(B(x,\gamma M))}{I_\nu(B(x,\gamma M))}\nu\right)(z).$$
 Lemmma \ref{smoothK} yields that the function
   $$ z \to H(y,z)(\eta_{\frac{1}{16},16,y}(z)-\eta_{\frac{1}{16},\kap,y}(z))$$
   is $ \frac{C}{\kap^{s+1}}$-Lipschitz, and supported in $B(y,17)$. So, since $\alpha_{\mu,\nu}(B(x,\gamma M)) < \alpha$ yields that
      \begin{align*}
  |F_{\mu,\kap}(y)-\frac{I_\mu(B(x,\gamma M))}{I_\nu(B(x,\gamma M))}F_{\nu,\kap}(y)|\lesssim  \frac{M^{s+1}}{\kap^{s+1}} \alpha.
    \end{align*}
    But now (\ref{dens}) ensures that $\frac{I_\mu(B(x,\gamma M))}{I_\nu(B(x,\gamma M))}\nu(B(0,9))\lesssim M^{C_0}\mu(B(x, 32))$, so we estimate (\ref{int3}) by a constant multiple of $\frac{M^{s+1}}{\kap^{s+1}}M^{C_0} \alpha \mu(B(x,32)) <\frac{\delta}{3}\mu(B(0, 64))$ as long as $\alpha$ is sufficiently small in terms of $\kap$ and $M$.

Finally, the reflection symmetry of $\nu$ comes in to play. Notice that $H(y,z)$ satisfies that $H(-y,-z)=-H(y,z)$ and that
$$\eta_{1,8,0}(y)(\eta_{\frac{1}{16},16,y}(z)-\eta_{\frac{1}{16},\kap,y}(z)) =\eta_{1,8,0}(-y)(\eta_{\frac{1}{16},16,-y}(-z)-\eta_{\frac{1}{16},\kap,-y}(-z))  .$$  Combined with the fact that $\nu$ is reflection symmetric about $0$ in $B(0,32)$, this  implies that
 \begin{align*}
   \int_{\R^d}  \int_{\R^d}\eta_{1,8,0}(y) [\eta_{\frac{1}{16},\kap,y}(z)-\eta_{\frac{1}{16},16,y}(z)] H(y,z) \,d \nu (z)\,d \nu(y)=0.
  \end{align*}
  Bringing our estimates together, we conclude that, in case (b) we arrive at
  $$ \Bigl|  \int_{\R^d} F_{\mu}(y)\eta_{1,8,0}(y) \,d\mu(y) \Bigl| \leq \delta \mu(B(0,2^{10})),$$
  and the lemma is proved.
  \end{proof}

\section{Gluing it all together:  The Proof of Theorem \ref{GenThm}}

In this section we complete the proof of Theorem \ref{GenThm}.  Fix $\mu\in \Ms$, and the CZO $T$ associated to a CZ-kernel $K$ which is bounded in $L^2(\mu)$.  First observe that $T$ is also bounded in $L^2(\mu|E)$, where $\mu(E)<\infty$.  Consequently, we may (and will) assume that $\mu$ is a finite measure.

From Section \ref{reduction} we recall that it suffices to show that the limit (\ref{PV}) holds with $\nu=\mu$.  To this end, we recall a (particular case of a) theorem due to Mattila and Verdera, which states the existence of the weak limit for certain CZOs. For completeness, the short proof is presented in Appendix A.
\begin{thm}(Mattila-Verdera)\label{Weak}
Fix $s\in (0,d)$.  Let $\mu \in \Ms$ be a finite measure.  If the CZO $T$ associated to $K$ is bounded in $L^2(\mu)$, $T_{r}(\mu) \in L^2(\mu)$ converges weakly in $L^2(\mu)$ to a function $\wt{T}(\mu)$ as $r \rightarrow 0$.  Moreover
$$\widetilde{T}(\mu)(x)= \lim_{\substack{|B_r| \rightarrow 0\\ x \in \frac{1}{2}B_r}} \frac{1}{\mu(B_r)} \int_{B_r} T(\chi_{B^c_r}\,\mu)\,d\mu \text{    for  }\mu\text{-a.e. } x \in \mathbb{R}^d,$$
where $B_r$ denotes a ball of radius $r$.
\end{thm}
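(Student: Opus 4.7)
The plan is to establish weak $L^2(\mu)$ convergence of $T_r(\mu)$ by first testing against a dense subclass, and then identify the weak limit pointwise via an antisymmetry identity combined with the Lebesgue differentiation theorem for $\mu$.

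Since $\mu$ is a finite measure in $\Ms$ and $T$ is bounded in $L^2(\mu)$, applying the $L^2(\mu)$ boundedness to the constant function $f\equiv 1$ gives $\sup_{r>0}\|T_r(\mu)\|_{L^2(\mu)}^2\lesssim\mu(\R^d)<\infty$. Therefore it suffices to show that $\int T_r(\mu)\,g\,d\mu$ has a limit as $r\to 0$ for every $g$ in a dense subclass of $L^2(\mu)$, which I would take to be $\Lip_0(\R^d)$. Exploiting the antisymmetry of $K$ and Fubini, I would rewrite
\begin{equation*}
\int T_r(\mu)\,g\,d\mu=\tfrac12\int\!\!\int_{|x-y|>r}K(x-y)\bigl[g(x)-g(y)\bigr]\,d\mu(y)\,d\mu(x).
\end{equation*}
The integrand is controlled by $C_K\|g\|_{\Lip}|x-y|^{1-s}$ on $\{|x-y|\lesssim\|g\|_\infty/\|g\|_{\Lip}\}$ and by $C_K\|g\|_\infty|x-y|^{-s}$ outside that region. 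A standard dyadic decomposition using $\mu\in\Ms$ shows both contributions are integrable against $d\mu\,d\mu$, so dominated convergence yields a limit $L(g)$ as $r\to 0$. Combined with the uniform bound and density of $\Lip_0(\R^d)$ in $L^2(\mu)$, this produces a unique weak $L^2(\mu)$ limit which we call $\widetilde{T}(\mu)$.

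For the pointwise identification, the key cancellation is the following: for any ball $B$ and any $s>0$, the function $(y,z)\mapsto K(y-z)\chi_{\{|y-z|>s\}}$ is bounded, antisymmetric in its two arguments, and integrated against $\chi_B(y)\chi_B(z)\,d\mu(y)\,d\mu(z)$, which has finite total mass and is symmetric under $(y,z)\mapsto(z,y)$. A direct application of Fubini therefore gives
\begin{equation*}
\int_B T_s(\mu)\,d\mu=\int_B T_s(\chi_{B^c}\mu)\,d\mu\qquad\text{for every }s>0.
\end{equation*}
Letting $s\to 0$, the left-hand side converges to $\int_B\widetilde{T}(\mu)\,d\mu$ by weak $L^2(\mu)$ convergence tested against $\chi_B\in L^2(\mu)$; this forces the right-hand side to converge as well, and one defines $\int_B T(\chi_{B^c}\mu)\,d\mu$ as that limit. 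Dividing by $\mu(B)$ and applying the Lebesgue differentiation theorem for $\mu$ along balls $B$ with $x\in\tfrac12 B$ and $\diam B\to 0$ then yields the stated pointwise identity at $\mu$-almost every $x$.

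The main obstacle I anticipate is making the expression $\int_B T(\chi_{B^c}\mu)\,d\mu$ well-posed: for $y\in B$ close to $\partial B$ the integral $\int_{B^c}K(y-z)\,d\mu(z)$ need not converge absolutely, so one cannot simply invoke Fubini on the right-hand side. The cleanest resolution is to interpret $\int_B T(\chi_{B^c}\mu)\,d\mu$ as $\lim_{s\to 0}\int_B T_s(\chi_{B^c}\mu)\,d\mu$, whose existence is itself a consequence of the argument above. A secondary subtlety is the appeal to Lebesgue differentiation for the non-doubling measure $\mu\in\Ms$; this is valid provided one restricts to balls $B$ with $x\in\tfrac12 B$, which is precisely the restriction imposed in the statement.
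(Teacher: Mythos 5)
Your proposal follows the same overall architecture as the paper's proof (antisymmetry identity $\int_B T_s(\mu)\,d\mu=\int_B T_s(\chi_{B^c}\mu)\,d\mu$, uniform $L^2$ bound plus a dense class to get weak convergence, then the Lebesgue differentiation theorem restricted to balls with $x\in\tfrac12 B$), but it differs in two places. First, your dense class is $\Lip_0(\R^d)$ with the symmetrization $\int T_r(\mu)g\,d\mu=\tfrac12\iint_{|x-y|>r}K(x-y)[g(x)-g(y)]\,d\mu\,d\mu$ and an absolute-convergence argument from $\mu\in\Ms$; the paper instead uses linear combinations of indicators of open balls, for which the existence of the limit comes from the ball identity itself. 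Your variant is self-contained and arguably more elementary for this step; both work. Second, and this is the one substantive shortfall: you interpret $\int_B T(\chi_{B^c}\mu)\,d\mu$ merely as $\lim_{s\to0}\int_B T_s(\chi_{B^c}\mu)\,d\mu$. The theorem (and, more importantly, its use in the proof of Theorem \ref{GenThm}, e.g.\ in Claim \ref{avecomp}) requires this to be the genuine Lebesgue integral of the pointwise-defined function $y\mapsto\int_{B^c}K(y-z)\,d\mu(z)$, $y\in B$. The growth condition $\mu\in\Ms$ alone does not give $T(\chi_{B^c}\mu)\in L^1(\mu|_B)$: the bound one gets is $|T(\chi_{B^c}\mu)(y)|\lesssim\log(C/\dist(y,B^c))$, and $\mu$ may concentrate mass near $\partial B$ so that this logarithm is not $\mu$-integrable. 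The paper closes this gap with a nontrivial external input, namely that $L^2(\mu)$ boundedness implies $T_{\ast}(\chi_{B^c}\mu)\in L^1(\mu)$ (the Cotlar-type inequality on non-homogeneous spaces of Nazarov--Treil--Volberg and Tolsa); this dominates the truncations, makes $T(\chi_{B^c}\mu)$ integrable on $B$, and lets dominated convergence identify your limit with the actual integral. You should add this ingredient; with it, your argument is complete.
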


To prove Theorem \ref{GenThm}, we shall show that  $$\lim_{r\to 0}T_r(\mu)(x)=\wt T(\mu)(x)$$
for every $x\in \R^d$ for which
\begin{itemize}
\item $\lim_{r \to 0} \alpha_{\mu}(B(x,r))=0$,
\item $\wt T(\mu)(x)$ is well defined, and moreover
$$\lim_{r\to 0}\sup\limits_{B_r: \, \,x\in \tfrac{1}{2}B_r}\Bigl|\frac{1}{\mu(B_r)}\int_{B_r} T(\chi_{\R^d\backslash B_r }\mu)(y)d\mu(y)-\wt T(\mu)(x)\Bigl|=0,
$$

\end{itemize}

Fix $\delta\in (0,1)$.  Introduce $\eps\ll \delta$, $M\gg \frac{1}{\delta}$ an even integer and $\alpha \ll \delta$, with $M$ sufficiently large in terms of $\delta$, $\eps$ sufficiently small in terms of $\delta$, and $\alpha$ sufficiently small in terms of $\eps$, $M$ and $\delta$, so that Corollary \ref{corlong}, Proposition \ref{propinter}, and Proposition \ref{propunit} can be applied with this choice of $\delta$.

Fix $t_0>0$ such that \begin{equation}\label{epsdiff}\Bigl|\frac{1}{\mu(B_r)}\int_{B_r} T(\chi_{B^c_r} \,\mu)(y)d\mu(y)-\wt T(\mu)(x)\Bigl|\leq \delta\end{equation} whenever $r<M^3 t_0$ and $ x\in \frac{1}{2}B_r $, and also $$\alpha_{\mu}(B(x, r))< \alpha\text{ whenever }r\leq M^3t_0.$$

For $r<t_0$, we want to compare $T_r(\mu)(x)$ to a collection of averages of the form $\frac{1}{\mu(B_{r'})}\int_{B_{r'}} T(\chi_{B^c_r}\, \mu)(y)d\mu(y)$ with $x\in \frac{1}{2}B_{r'}$ and $r'$ comparable to $r$.  The averaging process here is with a view to applying Proposition \ref{propunit}.  To formally carry this out requires an initial reduction to doubling scales.

\subsection{Reduction to Doubling scales} We consider two cases.\\

\noindent \textbf{Case 1.} Suppose first that $D_{\mu}(B(x,r))<\eps$.  Fix $A=2^{30}\Theta $.  Suppose that, for $\ell = 0,\dots,L-1$ we have
\begin{equation}\label{nondoub}D_{\mu}(B(x, \frac{r}{A^{\ell}}))> AD_{\mu}(B(x, \frac{r}{A^{\ell+1}})).
\end{equation}
Then for $\ell=0, \dots, L$,
\begin{equation}\label{nondoubdecay}\mu(B(x, A^{-\ell}r))\leq A^{-\ell(s+1)}\mu(B(x,r))\leq A^{-\ell(s+1)}\eps r^s,
\end{equation}
and consequently, by the David-Mattila Lemma (Lemma \ref{DMlem}),
\begin{equation}\label{nondoubnoprob}\int_{B(x,r)\backslash B(x, A^{-{L+1}}r)}|K(x,r)|d\mu(y)\lesssim \eps.
\end{equation}
If it happens that (\ref{nondoubdecay}) holds for every $\ell\in \mathbb{N}$, then we have that the integral $\int_{\R}K(x-y)d\mu(y)$ converges absolutely, and so certainly the principal value exists at this $x$.  Hence we may assume that there exists $L\in \mathbb{N}$ such that (\ref{nondoub}) holds for all $\ell<L$ and fails for $\ell=L$.  Set $r_0 = r/A^{L+1}$.  Then we say that $r_0$ is a doubling scale,
\begin{equation}\label{doublr0}D_{\mu}(B(x, Ar_0))\leq AD_{\mu}(B(x,r_0)),
\end{equation}
and (\ref{nondoubnoprob}) holds.

\textbf{Case 2.}  If $D_{\mu}(B(x,r))>\eps$, then set $r_0=r$.  Since $\alpha \ll \eps$, and certainly $\alpha_{\mu}(B(x,t))<\alpha$ for all $t\in [r_0, Ar_0]$, applying Lemma \ref{doubling} $C\log A$ times ensures that (\ref{doublr0}) holds.\\

Notice that in either case, $r_0\leq r$, and from (\ref{nondoubnoprob})
\begin{equation}\label{nondoubnoprob2} |T_{r}(\mu)(x) - T_{ r_0}(\mu)(x)|\lesssim  \eps \lesssim \delta.\end{equation}
With a doubling scale found, we now choose the averaging scales.

\subsection{Choosing the averaging scales}  Fix $R>0$. Choose $\nu\in \SK$ with $\alpha_{\mu, \nu}(B(x, Mr_0))<\alpha.$  Provided $\alpha$ is sufficiently small in terms of $\eps$ and $M$, then Alternative \ref{alt} tells us that either
\begin{enumerate}
\item there exists $\wt x \in B(x,r_0)$ with $\nu$ reflection symmetric in $B(\wt x,32 r_0)$, and we set $R=1$, or there exists $\wt x \in B(x, (32 \Theta +1)r_0)$ such that $\nu$ is reflection symmetric in $B(\wt x, 64(32 \Theta)r_0)$,  in which case we determine $R=2^6\Theta = 2 \cdot 32 \Theta \geq 32 \Theta + 1$.
\item $\mu(B(x, 2^{50}\Theta r_0))\lesssim \eps  r_0^s$, in which case we set $\wt x=x$ and $R=1$.
\end{enumerate}

Notice that, in either case $\wt x \in B(x, Rr_0)$ (we say this with a view to applying Corollary \ref{corlong}, Proposition \ref{propinter}, and Proposition \ref{propunit}, with $x_0=\wt x$ and $r=Rr_0$).

With this notation, we shall prove the following proposition:

\begin{cla}\label{avecomp}  One has
$$\Bigl|T_{ r_0}(\mu)(x) - \frac{1}{\sigma}\int_4^8\int_{B(\wt x,   Q R  r_0)}T(\chi_{B(\wt x,  Q R r_0)^c}\, \mu)(y) d\mu(y) dQ\Bigl|\lesssim \delta,
$$
where $\sigma=\int_{4}^{8} \mu(B(\wt x, Q R  r_0))\,dQ$.
\end{cla}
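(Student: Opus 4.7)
The plan is to decompose, at each $y\in B(\wt x, QRr_0)$, the indicator $\chi_{B(\wt x, QRr_0)^c}$ according to distance from $y$, and then to apply the three main estimates of Section~\ref{threeestimates} (Corollary~\ref{corlong}, Proposition~\ref{propinter}, and Proposition~\ref{propunit}) piece by piece. With $M'\in[M+2^{10},2^{11}M]$ the thin-shell integer selected by Lemma~\ref{thinshell} applied to $B(x,\cdot)$, I will write
\begin{align*}
T(\chi_{B(\wt x, QRr_0)^c}\mu)(y)
=\,&T([1-\eta_{\frac{1}{M'}, M'Rr_0, y}]\mu)(y)\\
&+T([\eta_{\frac{1}{M'}, M'Rr_0, y}-\eta_{\frac{1}{16}, 16Rr_0, y}]\mu)(y)\\
&+T(\eta_{\frac{1}{16}, 16Rr_0, y}\chi_{B(\wt x, QRr_0)^c}\mu)(y).
\end{align*}
This decomposition is exact: for $y\in B(\wt x, QRr_0)$ with $Q\leq 8$, any point at distance $\geq 16Rr_0$ from $y$ lies outside $B(\wt x, QRr_0)$, and since $M'Rr_0\gg QRr_0$ the factor $\chi_{B(\wt x, QRr_0)^c}$ equals $1$ on the supports of the first two pieces.

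For the non-local piece, Corollary~\ref{corlong} applied with $r=Rr_0$, $x_0=\wt x$, and $t=r_0$ gives, for each $Q\in[4,8]$,
$$\Bigl|T_{r_0}(\mu)(x)-\dashint_{B(\wt x, QRr_0)}T([1-\eta_{\frac{1}{M'}, M'Rr_0, y}]\mu)(y)\,d\mu(y)\Bigr|\lesssim\delta.$$
Multiplying by $\mu(B(\wt x, QRr_0))$ and integrating in $Q\in[4,8]$ yields $\sigma T_{r_0}(\mu)(x)+O(\delta\sigma)$. For the intermediate piece, Proposition~\ref{propinter} with the proposition's ``$M$'' equal to our $M'$ and $Q'=16$ gives the pointwise-in-$Q$ bound $\lesssim\delta\mu(B(\wt x,QRr_0))$, whose $Q$-integral is $O(\delta\sigma)$. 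For the local piece, the setup preceding the claim (via the alternative in Lemma~\ref{alt}) provides exactly the hypotheses needed to invoke Proposition~\ref{propunit} with $x_0=\wt x$, $r=Rr_0$, and $\gamma=1/R$: either $\nu$ is reflection symmetric about $\wt x$ in $B(\wt x, 32Rr_0)$ (case~(1)), or $\wt x=x$ and $\mu(B(x, 32r_0))\lesssim\eps r_0^s$ (case~(2)), yielding a bound of $\delta\mu(B(\wt x, 2^{10}Rr_0))$.

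To assemble these estimates, note that $\sigma\geq 4\mu(B(\wt x, 4Rr_0))$, while a bounded number of applications of Lemma~\ref{doubling} (using the smallness of $\alpha_\mu$ on the relevant band of scales and the fact that $\wt x\in B(x, Rr_0)$) gives $\mu(B(\wt x, 2^{10}Rr_0))\lesssim\mu(B(\wt x, 4Rr_0))\lesssim\sigma$. Hence the local contribution is also $O(\delta\sigma)$, and summing the three contributions and dividing by $\sigma$ produces the claim. The main difficulty is not analytic but organizational: one must verify that the ranges of scales on which $\alpha_\mu$-smallness is assumed (each of which is a different polynomial in $M$) are simultaneously covered by the standing hypothesis $r_0\leq t_0$, and that the dichotomy $R\in\{1,2^6\Theta\}$ forced by Lemma~\ref{alt} matches the two cases of Proposition~\ref{propunit}.
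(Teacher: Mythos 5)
Your proposal is correct and follows essentially the same route as the paper: the identical three-term decomposition of $T(\chi_{B(\wt x,QRr_0)^c}\mu)(y)$ via the cutoffs at scales $M'Rr_0$ and $16Rr_0$, the same applications of Corollary~\ref{corlong}, Proposition~\ref{propinter} (with $M$ replaced by $M'$), and Proposition~\ref{propunit} (with $x_0=\wt x$, $r=Rr_0$, $\gamma=1/R$), and the same doubling argument to compare $\mu(B(\wt x,2^{10}Rr_0))$ with $\sigma$.
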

\begin{proof}
Fix $M' \in [M+2^{10}, 2^{11}M]$ according to Lemma \ref{thinshell} with $r=Rr_0$, so $\mu(B(x, (M'+2^{10})Rr_0)\backslash B(x, (M'-2^{10})Rr_0))\leq \frac{2}{M}\mu(B(x, 2^{11}Mr_0))$.  We estimate the difference appearing on the left hand side of Claim \ref{avecomp} by
\begin{align*}
& \Bigl|T_{r_0}(\mu)(x) -\frac{1}{\sigma}\int_4^8 \int_{B(\wt{x}, QRr_0)}\!\!\!\!T([1-\eta_{\frac{1}{M'},M'r_0,y}]\mu)(y)d\mu(y)\,dQ \Bigl|\\
    & + \Bigl|\frac{1}{\sigma}\int_{4}^{8} \int_{B(\wt{x}, QRr_0)} T ([\eta_{\frac{1}{M'},M'r_0,y}-\eta_{\frac{1}{16},16Rr_0,y}]\mu)(y) \,d\mu(y) \,dQ\Bigl| \\
    & +\Bigl|\frac{1}{\sigma} \int_{4}^{8}  \int_{B(\wt{x}, QRr_0)} T ([\eta_{\frac{1}{16},16Rr_0,y} \,\,\chi_{B(\wt{x}, QRr_0)^c}]\mu)(y) \,d\mu(y) \,dQ\Bigl|\\
    &=  I + II + III.
    \end{align*}
Appealing Corollary \ref{corlong}, with $x_0=\wt{x}$ to the scale $r= Rr_0$ with $t=\frac{r}{R}$ (which is much greater than $\frac{Rr_0}{M})$, we obtain for any $Q\in [4,8]$,
\begin{equation}\begin{split}\nonumber
\Bigl|\mu(B(\wt x, QRr_0)) T_{r_0}(\mu)(x) &- \int_{B(\wt x, QRr_0)}\!\!\!\!T([1-\eta_{\frac{1}{M'},M'r_0,y}]\mu)(y)d\mu(y)\Bigl| \\\leq& \delta \, \mu(B(\wt x, QRr_0)),
\end{split}\end{equation}
and hence, by integrating both sides of this inequality over $Q\in [4,8]$, we see that $I\leq \delta$.

To bound $II$, we appeal to Proposition \ref{propinter} with $x_0=\tilde{x}$, $M$ replaced by $M'$ and $r=Rr_0$. This yields
\begin{equation}\begin{split} \nonumber \Bigl|\int_{B(\wt x, QRr_0)}&T([\eta_{\frac{1}{M'},M'r_0,y}- \eta_{\frac{1}{16}, 16R r_0,y}]\mu)d\mu(y)\Bigl| \leq \delta \, \mu(B(\wt x, QRr_0)),\end{split}\end{equation}
and integrating this inequality over $Q\in [4,8]$ we get that $II\leq \delta$.

Lastly, we turn to $III$, for which we intend to apply Proposition \ref{propunit}. With $x_0=\tilde{x}$ and $r=Rr_0$, either alternative (1) or (2) is satisfied in the hypothesis of Proposition \ref{propunit} with the measure $\nu$.  Moreover $\alpha_{\mu,\nu}(B(x, \gamma Rr_0))<\alpha$ with $\gamma  = \frac{1}{R}$ (which depends on $\Theta$), so $\gamma M\gg 1$. Consequently, we may apply Proposition \ref{propunit} to get that
\begin{equation} \begin{split}\nonumber\Bigl|\int_{4}^{8}\int_{B(x_0, QR r_0)}  T&(\eta_{\frac{1}{16}, 16R r_0,y}\;\chi_{B(\wt x, QRr_0)^c}\mu)d\mu(y)dQ\Bigl|<\delta \mu(B(\wt x,2^{10}R r_0)).\end{split}\end{equation}
But now $B(\wt x, 2^{10}Rr_0)\subset B(x, Ar_0)$, and $B(x, r_0)\subset B(\wt x, 4Rr_0)$, so the doubling property, and monotonicity of the measure, yields $$\mu(B(\wt x, 2^{10}Rr_0))\lesssim \mu(B(x, r_0))\lesssim \int_4^8\mu(B(x, QRr_0))dQ=\sigma.$$
Combining these observations yields $III\lesssim \delta$, and so the claim is proved.
\end{proof}
To finish the proof Theorem \ref{GenThm}, notice that $x\in \frac{1}{2}B(\wt x,  Q R r_0)$ for every $Q\in [4,8]$, and so (\ref{epsdiff}) ensures that
$$\Bigl|\wt T(\mu)(x) - \frac{1}{\sigma}\int_4^8\int_{B(\wt x, QR r_0)} T(\chi_{B(\wt x,QR r_0)^c}\mu)(y) d\mu(y) dQ\Bigl|\lesssim \delta.$$
Therefore from Claim \ref{avecomp} we get that
$$|\wt T(\mu)(x) - T_{r_0}(\mu)(x)|\lesssim \delta.
$$
So applying (\ref{nondoubnoprob2}),
$$|\wt T(\mu)(x) - T_r(\mu)(x)| \lesssim \delta.
$$
Therefore, $\lim_{r\to 0}T_r(\mu)(x) = \wt T(\mu)(x)$, and Theorem \ref{GenThm} is proved.

\appendix
\section{\\The Mattila-Verdera Weak Limit}
For the reader's convenience, in this section we provide the proof of Theorem \ref{Weak}.
In order to do so, we recall the following version of the Lebesgue Differentiation Theorem:
\begin{thm}\label{LebDiff}
Let $\mu$ be a locally finite Borel measure and let $f$ be a locally integrable function. Then
$$f(x) =  \lim_{\substack{|B_r| \rightarrow 0\\ x \in \frac{1}{2}B_r}} \frac{1}{\mu(B_r)} \int_{B_r} f\,d\mu \text{    for  }\mu\text{-a.e. } x \in \mathbb{R}^d.$$
\end{thm}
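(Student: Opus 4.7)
The plan is to derive this non-centered Lebesgue differentiation statement from the classical centered Besicovitch-Federer differentiation theorem by a geometric comparison, using the asymptotic doubling property that holds at $\mu$-almost every point for any locally finite Borel measure on $\R^d$.

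The argument proceeds in three steps. First, I would invoke the centered Lebesgue point property: for any locally finite Borel measure $\mu$ on $\R^d$ and any $f \in L^1_{\loc}(\mu)$, one has
\[
\lim_{r \to 0} \frac{1}{\mu(B(x,r))} \int_{B(x,r)} |f(y) - f(x)|\, d\mu(y) = 0
\]
for $\mu$-a.e.\ $x$. This is the standard theorem, proved via the Besicovitch covering theorem in $\R^d$ (which gives a weak-$(1,1)$ bound for the centered Hardy-Littlewood maximal function) together with the density of continuous functions in $L^1_{\loc}(\mu)$.

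Second, I would use the asymptotic doubling property: for $\mu$-a.e.\ $x$,
\[
\limsup_{r \to 0} \frac{\mu(B(x,3r))}{\mu(B(x,r))} < \infty.
\]
This is classical for Radon measures on $\R^d$ and can be derived by considering, for each $N$, the set $E_N$ of points where the ratio exceeds $N$ along arbitrarily fine scales; covering $E_N$ by a Besicovitch-type (Vitali) family of the corresponding bad balls at sufficiently fine scales and comparing the total masses of the inner and outer balls produces a constant-times-$1/N$ bound for $\mu(E_N)$ on each bounded set, so that $\mu(\bigcap_N E_N) = 0$.

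Given both ingredients, the theorem follows from the elementary inclusion $B(x, r/2) \subset B_r \subset B(x, 3r/2)$, valid for any ball $B_r$ of radius $r$ with $x \in \tfrac{1}{2}B_r$. In particular $\mu(B_r) \geq \mu(B(x, r/2))$, and
\[
\Bigl|\frac{1}{\mu(B_r)} \int_{B_r} f\, d\mu - f(x)\Bigr| \leq \frac{\mu(B(x, 3r/2))}{\mu(B(x, r/2))} \cdot \frac{1}{\mu(B(x, 3r/2))} \int_{B(x, 3r/2)} |f(y) - f(x)|\, d\mu(y).
\]
At every $x$ satisfying both of the preceding $\mu$-a.e.\ properties, the first factor stays bounded as $r \to 0$ while the second tends to zero, so the limit equals $f(x)$.

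The main obstacle is the asymptotic doubling step, which requires a careful Besicovitch covering argument since $\mu$ is not assumed to be doubling at all scales. Once that is in hand, the remainder of the proof reduces to the elementary geometric inclusion above combined with the standard centered differentiation theorem.
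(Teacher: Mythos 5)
Your Step 1 (the centered differentiation theorem via the Besicovitch covering theorem) and your Step 3 (the inclusions $B(x,r/2)\subset B_r\subset B(x,3r/2)$ for a ball $B_r$ of radius $r$ with $x\in\tfrac12 B_r$, and the resulting comparison of averages) are fine. The gap is Step 2: the assertion that every locally finite Borel measure on $\R^d$ satisfies $\limsup_{r\to 0}\mu(B(x,3r))/\mu(B(x,r))<\infty$ for $\mu$-a.e.\ $x$ is false, and your sketched proof of it does not work. In the covering argument, Besicovitch gives bounded overlap for the \emph{selected} balls $B(x_i,r_i)$, not for their dilates $B(x_i,3r_i)$; after the step $\mu(E_N)\le\sum_i\mu(B(x_i,r_i))\le N^{-1}\sum_i\mu(B(x_i,3r_i))$ you cannot bound $\sum_i\mu(B(x_i,3r_i))$ by a constant times the measure of a bounded set, because the tripled balls can pile up on a single heavy piece of $\mu$ — controlling masses of dilated balls is exactly a doubling-type statement, so the argument is circular. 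What is classical is only the $\liminf$ version (at a.e.\ point there exist \emph{some} arbitrarily small radii with bounded doubling ratio), which does not suffice here since the theorem requires control along \emph{every} admissible sequence of balls. A concrete counterexample to your Step 2: build a Cantor-type probability measure on $[0,1]$ where each construction interval of generation $k$ passes its mass to two well-separated children in proportions $(\tfrac1k,1-\tfrac1k)$, with the light child and the heavy child separated by a gap $g$ large compared to their lengths and small compared to the buffer to the parent's boundary. If $x$ lies in the light child at stage $k$, then with $r\approx g$ one has $\mu(B(x,3r))/\mu(B(x,r))\gtrsim k$; since the events ``$x$ lies in the light child at stage $k$'' are independent with probabilities $1/k$ and $\sum_k 1/k=\infty$, Borel--Cantelli gives $\limsup_{r\to 0}\mu(B(x,3r))/\mu(B(x,r))=\infty$ for $\mu$-a.e.\ $x$. (Measures with $\mu$-a.e.\ infinite asymptotic doubling ratios are also known in the tangent-measure literature, e.g.\ O'Neil's example.) So your argument proves the theorem only at asymptotically doubling points, which can fail to be a set of full measure.

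The paper's proof avoids doubling altogether: it establishes a weak $(1,1)$ inequality for the restricted non-centered maximal operator $M(f)(x)=\sup_{B:\,x\in\frac12 B}\frac{1}{\mu(B)}\int_B|f|\,d\mu$, valid for \emph{every} locally finite Borel measure, via the variant of the Besicovitch covering lemma stated in the appendix (a subfamily with dimensionally bounded overlap whose union still covers the union of the half-balls $\tfrac12 B_j$); the differentiation statement then follows by the standard density argument. The key point, which your route misses, is that the Besicovitch selection only needs each point to lie within half the radius of the center of its ball, so no comparison between $\mu(B(x,r))$ and $\mu(B(x,3r))$ is ever required. If you want to salvage your approach you would need to replace Step 2 by this maximal-function estimate (or prove the theorem directly from the covering lemma), rather than by an a.e.\ doubling property.
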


This theorem follows in a standard manner from the weak (1,1) inequality for the Maximal funtion
$$M(f)(x) = \sup_{B\text{ a ball},\, x\in \tfrac{1}{2}B}\frac{1}{\mu(B)}\int_B f\, d\mu.
$$
Again appealing to standard theory, the weak-type inequality follows (for instance) from the validity of the following variant of the Besicovitch covering lemma:

\begin{lemma} Suppose that $\{B_j\}_j$ is a finite collection of balls, then there is a sub-collection $\{B_{j_k}\}_k$ such that
\begin{itemize}
\item there is a dimensional constant $M=M(d)>0$, such that for any $x\in \R^d$, $\operatorname{card}\{k: x\in B_{j_k}\}\leq M$, and
\item $\bigcup_k B_{j_k}\supset \bigcup_j \frac{1}{2}B_j.$
\end{itemize}
\end{lemma}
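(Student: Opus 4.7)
The statement is a finite variant of the Besicovitch covering lemma, and I would prove it by a greedy selection ordered by decreasing radius. Reorder the balls so that $r_{j_1}\geq r_{j_2} \geq \cdots$ (ties broken arbitrarily), initialize the subcollection $\mathcal{G}$ as empty, and for each $j$ in order include $B_j$ in $\mathcal{G}$ precisely when $\tfrac{1}{2}B_j$ is not already contained in $\bigcup_{B\in \mathcal{G}} B$. Denote the resulting subcollection $\{B_{j_k}\}_k$.

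The covering property is immediate from the construction: if $B_j$ is not selected then $\tfrac{1}{2}B_j \subset \bigcup_k B_{j_k}$ directly by the selection rule, and if $B_j$ is selected then $\tfrac{1}{2}B_j\subset B_j \subset \bigcup_k B_{j_k}$. Hence $\bigcup_j \tfrac{1}{2}B_j \subset \bigcup_k B_{j_k}$.

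For the bounded multiplicity, fix $x\in \R^d$ and let $B_{j_{k_1}},\dots, B_{j_{k_n}}$ denote the selected balls containing $x$, listed in selection order, with centers $y_i$ and radii $\rho_1\geq \rho_2 \geq \cdots \geq \rho_n$. For each $i\geq 2$, the selection rule produces a witness $z_i \in \tfrac{1}{2}B_{j_{k_i}}$ with $z_i \notin B_{j_{k_l}}$ for every $l<i$. Consequently $|z_i - y_i|< \rho_i/2$ and $|y_i-x|<\rho_i$, yielding $|z_i-x|<3\rho_i/2$, while for $l<i$ one has $|z_i-y_l|\geq \rho_l$, so that by the triangle inequality $|y_i-y_l|>\rho_l-\rho_i/2 \geq \tfrac{1}{2}\max(\rho_i,\rho_l)$ and $|z_i - z_l| > \tfrac{1}{2}\max(\rho_i,\rho_l)$. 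The conclusion $n\leq M(d)$ then follows from a dimensional packing argument in the spirit of the classical Besicovitch theorem applied to the configuration $(y_i, \rho_i)_{i=1}^n$.

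\textbf{Expected obstacle.} The main technical step is the packing argument in the multiplicity bound. Because the radii $\rho_i$ may span arbitrarily many scales, a naive volume comparison on the pairwise disjoint balls $B(y_i,\rho_i/4)$ only yields $\sum_i (\rho_i/\rho_1)^d \leq C_d$, which does not bound $n$ directly. The classical Besicovitch argument resolves this by converting the separation and containment estimates into an angular separation bound for the vectors $(y_i-x)$ at $x$, so that only boundedly many such vectors can coexist. Adapting this step to the present setting---where the separation $\tfrac{1}{2}\max(\rho_i,\rho_l)$ depends on the larger of the two radii---will be where the core dimension-dependent constant $M(d)$ is produced.
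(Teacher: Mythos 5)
Your proposal is correct and is precisely the argument the paper has in mind: the paper offers no proof at all, stating only that the lemma is ``an exercise in the proof of the Besicovitch covering theorem'' (with a pointer to de Guzm\'an), and your greedy selection by decreasing radius, together with the witness points $z_i\in\tfrac12 B_{j_{k_i}}$ yielding the separation $|y_i-y_l|>\tfrac12\max(\rho_i,\rho_l)$ for selected balls through a common point $x$, is exactly that exercise carried out. The single step you defer --- that a family of balls $B(y_i,\rho_i)$ all containing $x$ whose centers satisfy such a separation has cardinality at most $M(d)$ --- is the classical Besicovitch counting lemma (the angular-separation packing argument you describe), so nothing essential is missing.
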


The verification of this lemma is an exercise in the proof of Besicovitch covering theorem, and is left to the reader (see also \cite[p.6-7]{deGuz} for more general statements).\\

Now we proceed with the proof of Theorem \ref{Weak}, which is a special case of the analysis in \cite{MV}.
\begin{proof}[Proof of Theorem \ref{Weak}]
We need to show that $T_{r}(\mu)$ has a weak limit in $L^2(\mu)$ as $r \rightarrow 0$. Due to the antisymmetry of $K$, for every $r >0$ and any open ball $B$ we have
$$\int_B T_{r}( \chi_B \mu) \,d \mu =0.$$
Consequently,
$$ \int_B T_{r}(\mu) \,d \mu = \int_B T_{r} (\chi_{\R^d\backslash B}\mu) \,d \mu.$$
Notice that $T(\chi_{\R^d\backslash B}\mu)(x)$ is well-defined for $x \in B$. Also, since $B$ is open, $T(\chi_{\R^d\backslash B}\mu)(x) =  \lim_{r \rightarrow 0} T_{r} (\chi_{\R^d\backslash B}\mu)(x),$ for $x \in B$.
Moreover,
$$|T_{r} (\chi_{\R^d\backslash B}\mu)(x)| \leq \sup_{r >0} |T_{r}(\chi_{\R^d\backslash B}\mu)(x)| \stackrel{\text{def}}{=} T_{\ast} (\chi_{\R^d\backslash B}\mu)(x).$$
It is well known that (within the class of convolution kernels we consider in Definition \ref{CZdef}) $L^2(\mu)$ boundedness of $T$ implies that $T_{\ast}(\mu)\in L^1(\mu)$ (see \cite{NTV3} and \cite{To6}). Consequently, from the dominated convergence theorem we deduce that
\begin{equation}\label{weaklimball}\lim_{r \rightarrow 0} \int_B T_{r} (\mu) \,d\mu = \int_B T ( \chi_{\R^d\backslash B}\mu )d\mu.\end{equation}
In order to prove the weak convergence  in $L^2(\mu)$ of $T_{r}(\mu) $ as $r \rightarrow 0$, we shall prove the existence of
$$\lim_{r \rightarrow 0} \int_{\R^d} T_{r} (\mu) g \,d\mu, \text{  for every }g \in L^{2}.$$

We know this limit exists if $g$ is the characteristic function of an open ball, and therefore the limit exists for linear combinations of characteristic functions of open balls, a collection of functions which we denote by $S$.  By appealing to (for instance) the Vitali covering lemma, one can see that $S$ is dense in $L^2(\mu)$.

Fix an arbitrary function $g \in L^{2}(\mu)$. For $\delta>0$, let $b \in S$ satisfy $\|g-b\|_{L^{2}(\mu)} < \delta$. Then, for $r_1, r_2 >0$,
\begin{align*}
 \int_{\R^d} (T_{r_1}(\mu) - T_{r_2} (\mu)) g \,d \mu= \int_{\R^d} & (T_{r_1}(\mu) - T_{r_2}(\mu)) b \,d \mu \\
 & + \int_{\R^d} (T_{r_1} (\mu) - T_{r_2} (\mu)) (g-b) \,d \mu.
\end{align*}
The second term is bounded by $2 \|T_{\ast}(\mu) \|_{L^2(\mu)}\|g-b\|_{L^{2}(\mu)} \leq 2 \delta  \|T_{\ast}(\mu) \|_{L^2(\mu)}$.
Consequently,
$$ \limsup_{r_1,r_2 \rightarrow 0} \left| \int_{\R^d} (T_{r_1}(\mu) - T_{r_2}(\mu) ) g \,d \mu \right| \leq 2 \delta \|T_{\ast}(\mu) \|_{L^2(\mu)}. $$
As $\delta>0$ is arbitrary, we have that $T_{r} (\mu)$ converges weakly in $L^2(\mu)$ as $r \rightarrow 0$ to a function $\widetilde{T}(\mu)$.

Now, by Theorem \ref{LebDiff}, for $\mu$-almost every $x \in \mathbb{R}^d$ we have that
\begin{align*}
 & \widetilde{T}(\mu) (x)= \lim_{\substack{|B_r| \rightarrow 0\\ x \in \frac{1}{2}B_r}} \frac{1}{\mu(B_r)} \int_{B_r} \widetilde{T}(\mu) \,d\mu \\ & \stackrel{\text{weak convergence}}{=} \lim_{\substack{|B_r| \rightarrow 0\\ x \in \frac{1}{2}B_r}}\lim_{t\to 0} \frac{1}{\mu(B_r)} \int_{B_r} T_t(\mu)\,d\mu\\
 & \stackrel{(\ref{weaklimball})}{=} \lim_{\substack{|B_r| \rightarrow 0\\ x \in \frac{1}{2}B_r}} \frac{1}{\mu(B_r)} \int_{B_r} T(\chi_{\R^d\backslash B_r}\mu)\,d\mu
 \end{align*}
 and the theorem is proved.
\end{proof}

\end{document}